 \theoremstyle{plain}
 \newtheorem{theorem}{Theorem}[section]
 \newtheorem{definition}[theorem]{Definition}
 \newtheorem{lemma}[theorem]{Lemma}
 \numberwithin{equation}{section} 
 \numberwithin{figure}{section} 
 \newtheorem{prop}[theorem]{Proposition}
 \theoremstyle{remark}
 \newtheorem{remark}[theorem]{Remark}
 \newtheorem{example}[theorem]{Example}
\newtheorem*{acknowledgment*}{Acknowledgment}
\newcommand{\mean}[1]{\, -\hskip-1.08em\int_{#1}}
\def\Refo{{\rm Ref}}
\def\GRefo{{\rm GRef}}
\def\lip{{\rm Lip}}
\def\eps{\varepsilon}
\def\R{{\mathbb R}} 
\def\N{{\mathbb N}}
\def\M{{\mathcal M}} 
\def\C{{\mathcal C}} 
\def\H{{\mathcal H}} 
\def\L{{\mathcal L}}
\def\P{{\mathcal P}}
\def\res{\mathop{\hbox{\vrule height 7pt width.5pt depth 0pt\vrule height .5pt width 6pt depth
0pt}}\nolimits}
\title[Elastic reformations]{A Metric Approach to  Elastic reformations }
\author{Luca Granieri, Francesco Maddalena}
\address{Dipartimento di Matematica Politecnico di Bari, via Orabona 4, 70125 Bari, Italy}
\email{granieriluca@libero.it, l.granieri@poliba.it,  f.maddalena@poliba.it}
\date{}
\subjclass{}
\begin{document}
\begin{abstract}
We study a variational framework to compare shapes, modeled as Radon measures on $\R^N$, in order to quantify how they differ from isometric copies. To this purpose we discuss some notions of \emph{weak deformations} termed \emph{reformations} as well as integral functionals having some kind of \emph{isometries} as minimizers. The approach pursued is 
based on the notion of pointwise Lipschitz constant leading  to  a space metric  framework. In particular, to compare general shapes, we study this \emph{reformation} problem by using the notion of transport plan and  Wasserstein distances as in optimal mass transportation theory.

\end{abstract}

\maketitle
\tableofcontents
\begin{flushleft}
  {\bf Keywords.\,} Calculus of Variations, Shape Analysis, Mass Transportation Theory, Geometric Measure Theory,  Elasticity,  Monge-Kantorovich  problem.\\ 
  {\bf MSC 2000.\,} 37J50, 49Q20, 49Q15.
\end{flushleft}

\section*{Introduction}
One of the main goal in shape analysis relies in detecting and quantifying differences between shapes. The interest for such studies   concerns a wide range of applications, especially those within the computer vision community, in particular in pattern recognition,
image segmentation, and computation anatomy (see \cite{shape-book, trouve, trouve2}).
In recent years many authors have focused their attention on the notions of 
\textit{shape space} and  \textit{shape metric} to the aim of  establishing a general framework in which  the analysis of shapes crucially depends on their invariance with respect to suitable geometric transformations (see \cite{FJSY, trouve, sur-cur, SSJD}). 
A natural suggestion in this direction comes from continuum mechanics since the variational theory of elasticity  can be used to compare the initial and final shape of  a deformable  material body, i.e. to establish  how  the two shapes differ from an isometry of the euclidean space. Therefore some authors begin to study 
the possible links between elastic energies and distances in shape spaces (see  \cite{FJSY, WBRS, wolan}).\\
On the other side, by arguing  from a mechanical perspective, we know that a large class of physical manifestations (fractures, fragmentations,  material instabilities)  require  more general kinematical  tools
than those available in the context of  Sobolev maps, hence it seems  reasonable to exploit  a more general mathematical framework to obtain more accurate  descriptions of more complex physical problems.\\
In this paper we model (material) shapes as Radon measures on subsets  $X,Y\subset\R^N$ and study  a variational model to the aim of quantifying  how a target shape $\nu$ on $Y$ differs from an isometric copy of $\mu$ on $X$.
To this purpose  we  scrutiny  some  notions of \emph{weak} deformations, which we denote by the term \textit{reformations},  as well  as energy like (or cost) functionals having some kind of  \emph{isometries} between $\mu$ and $\nu$ as minimizers. \\
In the first part of the paper (Sections 1,2,3)  we study the variational problem of \textit{reformation} of two shapes $\mu$ and $\nu$ through  functions called \textit{reformation maps},  while in the second part (Sections 4,5,6) we relax the problem  by  considering  a formulation in terms of
 \textit{transport plans}  which leads to a  variational framework   as in optimal  transport theory.\\
 For reader convenience we have added an appendix containing some basic tools from analysis in metric spaces.
\section*{Description of the variational model and main results}
To \emph{quantify} how two shapes $X,Y\subset\R^N$ are close to be \emph{isometric}, an usual way  relies in considering $Y=u(X)$ for maps $u$  belonging to a suitable class of admissible maps. The two shapes are   \emph{isometric} if there exists   $u:X\rightarrow Y$ such that $u(X)=Y$ and   
$$|u(x)-u(y)|=|x-y|, \:\:\:\forall x,y\in X.$$
Equivalently, the last condition means that the map $u$ has   bi-Lipschitz constant $L=1$. Let us recall that a map $u: x \rightarrow Y $ is said to be bi-Lipschitz with constant $L$ if 
$$ \frac{1}{L}|x-y|\leq |u(x)-u(y)|\leq L|x-y|,\:\:\:\forall x,y\in X.$$ 
Therefore, the two shapes $X,Y$ could be considered close to be \emph{isometric} as the bi-Lipschitz constant $L$ is close to one, so assuming the bi-Lipschitz constant as a quantifier of the closeness to the isometry.
 This approach has the disadvantage to involve  a global condition. For instance,  the shapes in Figure $0.1$ 
 looks very close to be isometric but the bi-Lipschitz constant is quite large and far from $L=1$, whatever the size of the bending part. 
 \begin{figure}[htbp]\label{bending-fig}\vspace{1cm}\begin{pspicture}(-4.5, -1.5)(12.5, 3.5)\pspolygon(-1,-1)(-1,4)(1,4)(1,-1)\pspolygon(4,-1)(4,3)(7,3)(7,1)(6,1)(6,-1)\psline[linestyle=dashed](6,3)(6,1)\uput*[0](6.1,2){$\frac{1}{n}$}\end{pspicture}\caption{Bending a rectangle.}\end{figure}
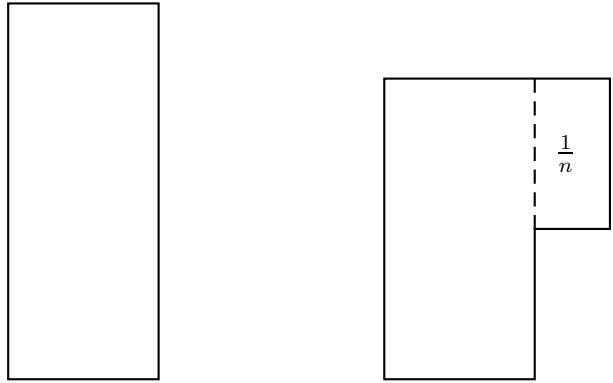
   To avoid this difficulty some \emph{localization} procedure is needed. 
 This can be done by an analytical approach.\\
  An isometry $u$ is of course an affine map $u(x)=Ax+b$ and  $\nabla u=A$ is an orthogonal matrix. Actually, under some regularity assumptions, by Liouville Rigidity Theorem the orthogonality of the 
 Jacobian matrix characterizes  the isometric maps (see also 
 Theorem~\ref{rigidity}). Hence, a reasonable way to quantify how two shapes are isometric is that of measuring how  $\nabla u$ is close to be an orthogonal matrix. 
This program can be carried on by selecting a function $W$ reaching its minimal value at the  orthogonal matrices. Then, by Liouville  Rigidity Theorem, it follows that the isometries characterize  the minimal possible value of  the functional $I(u)=\int_\Omega W(\nabla u)\ dx$.
This approach is pursued in \cite{wolan} for smooth $2$-dimensional domains where the admissible  maps  are  
incompressible  diffeomorphisms, i.e. $u$ such that $\det \left (\nabla u \right )=1$.\\ 
 In order to characterize the isometries,  a polyconvex function $W$ having minimal value at orthogonal matrices is selected.  Therefore, to quantify how two domains $\Omega_1, \Omega _2\subset\R^N$ are close to be isometric one considers the variational problem
$$\min \left \{ \int_{\Omega _1} W(\nabla u)\ dx\:\vert\: u(\Omega_1)=\Omega _2,\: u\in \mathcal D \right \}, $$ 
where $\mathcal D$ denotes the set of 
incompressible diffeomorphisms.
This approach has of course many restrictions. For instance, to compare a connected domain to a disconnected one, or for non-smooth domains,  many regularity questions arise. \\
  In continuum mechanics, 
  one usually looks for minimizers $u :\overline \Omega \rightarrow \R^N$ of the stored energy $I(u)=\int_\Omega W(\nabla u)\ dx$ in an admissible class  of deformations usually  consisting  of Sobolev functions which are locally orientation preserving, i.e. ${\rm det} \nabla u(x) >0$ for a.e. $x\in \Omega$. 

A main goal of our approach relies in exploiting possible extensions of  the variational scheme of elasticity 
 in order to compare more general shapes as those in Figure $4.1 $, 
also allowing \emph{fragmentations}.   However, a purely measurable setting does not work to compare shapes as shown in Example \ref{ex} and, on the other hand, to compare a more extended class of shapes we have to reduce  regularity requirements. So, a useful compromise relies in  working on a general metric framework.\\
We remark that an  approach like the one followed in  \cite{wolan}  cannot be pursued in a metric framework,  indeed the mapping 
 $A\mapsto\varphi(\|A\|)$ is polyconvex only if $\varphi$ is a positive convex and strictly increasing function (see for instance \cite{dac}),  therefore the minimal value cannot be reached at orthogonal matrices $A$, since they have $\|A\|=1$.\\
 We denote by $\P(X)$ the space of probability measures on the metric space  $X$.
 Assume the material shapes are given by probability measures 
 $\mu\in \P(X)$ and $\nu\in \P(Y)$,  (to fix ideas consider $\mu =\L^N\res X$, 
 $\nu =\L^N\res Y$).\\
 In this paper we assume the \textit{pointwise Lipschitz constant} $\lip(u)(x)$ (see Definition~\ref{plc}) as a  local descriptor to measure how an admissible map $u$ is an \emph{expansion} or a \emph{contraction}. Note that   the  \textit{pointwise Lipschitz constant} $\lip(u)(x)$  coincides with the operator norm 
$\|\nabla u (x)\|$ whenever $u$ is a  differentiable map.  
Hence the local expansion and contraction due to the map $u$ at any point $x$ are respectively represented by the functions $e_u(x)$ and $c_u(x)$ (see Definition~\ref{c-e}) depending on $\lip(u)(x)$.\\
 Let $H(x),K(x)>0$ be given. We require the admissible maps $u:X\rightarrow Y$ satisfy the conditions  
 \begin{equation}
 \label{push}
 u_\# \mu=\nu,
 \end{equation}
 \begin{equation}
e_u\leq K,\:\: c_u\leq H .
 \end{equation}
 These maps will be called  \emph{reformation maps}, and the set of such functions will be   denoted by $\Refo(\mu;\nu)^{H,K}$ (see Definition \ref{def-ref}). We consider the local reformation  energy  $r_u=e_u+c_u$ and the total reformation energy  $\mathscr{R} (u)=\int _X r_u(x) \ d\mu$, so   in Theorem \ref{min1} we show that the variational problem 
\begin{equation}
\min\{ \mathscr{R}(u) \;\vert\; u\in \Refo(\mu ;\nu)^{H,K} \}
 \end{equation}
  admits solutions whenever $\Refo(\mu ;\nu)^{H,K}\neq \emptyset$. 
Therefore, to quantify how the two measures $\mu, \nu$ are isometric we look to the \emph{elastic reformation energy}
 between $\mu$ and $\nu$ defined by 
\begin{equation}\label{ere}\mathcal E(\mu,\nu):= \hbox{\rm inf} \{ \mathscr{R}(u) \:\vert\:  u\in \Refo(\mu;\nu)^{H,K}\}.
\end{equation}
In Theorem \ref{main} we show that the  value of \eqref{ere} is attained and equal to $2$ if and only if the two shapes are isometric.\\
In Section 4 we extend the scenario to deal with  the case of non-existence of  maps satisfying  \eqref{push} and this happens, for instance, when fragmentation occurs. In such a case  the notion of transport plan coming from
the optimal mass transportation or Monge-Kantorovich theory is useful. 
A transport plan between $\mu$ and $\nu$  is a measure  $\gamma \in \P(X\times Y)$, 
having $\mu, \nu$ as marginals, 
namely $(\pi_1)_\# \gamma = \mu, (\pi_2)_\# \gamma = \nu$, where $\pi_{1,2}$ are the projections of $X\times Y$ on its factors. The notion of transport plan  could be considered as a generalization of a transport map, i.e. $u:X\rightarrow Y$, such that $u_\#\mu =\nu$,   and so as a \emph{weak} notion of reformation of $\mu$ into $\nu$. 
We shall refer to such measures  as \textit{reformation plans}.
Actually, to every transport map corresponds the transport plan given by $(I\times u)_\# \mu$. 
The
 shapes $\mu, \nu$  could be compared by considering the local mass transportation displacement in the target configuration. 
 
 More precisely, by Disintegration Theorem (see \cite[Section 2.5]{AFP}) every transport plan $\gamma \in \P(X\times Y)$ can be written as $\gamma =  f(x)\otimes \mu$, where
 \begin{equation}
   f : X \rightarrow (\P(Y), W) 
   \end{equation}
 is called \textit{disintegration map}   and $\P(Y)$ is equipped with the Wasserstein distance $W$. 
 This point of view leads  to  formulate the reformation problem in terms of disintegration map $f$ and related metric expansion and contraction energies
 (see Definition~\ref{g-c-e}).
 So, in this setting,  
 reformation maps take value in 
the space of  probabilities, endowed with the Wasserstein metric, over
the target domain.
The main advantage of this approach relies in its  generality and in its connections with fertile topics as optimal mass transportation and geometric measure theory.
 However, many interesting open questions arise as the regularity needed on $f$ to capture relevant geometrical and physical properties of the shapes. \\
 In Section 5 we show several examples of shape reformations attainable by disintegration maps but not attainable  by any  regular transport map.\\
 In Section 6 we study the main aspects of the variational problems of reformation in the enlarged context of generalized reformations,   showing in Theorem~\ref{min2} how isometric measures can be characterized by means of the reformation energy. 
 In Theorem~\ref{existence-plan} we prove the existence  of minimizing reformation plans  for a constrained  variational problem.



 \section{The pointwise Lipschitz constant}
 In this section we introduce the notion of  \textit{pointwise Lipschitz constant} and scrutiny some properties related to this tool since it will play a  crucial role in this  paper.  
\begin{definition} 
\label{plc}
  Let  $(X, d_X)$,  $(Y, d_Y )$  be two  metric spaces and let 
    $f: (X, d_X) \rightarrow ( Y,d_Y )$.  The pointwise Lipschitz  constant
    $\lip(f)(x_0)$ of $f$  at $x_0 \in X$  is defined by

\begin{equation}
   \lip(f)(x_0):= \left\{
   \begin{array}{ll}\displaystyle\limsup_
    {x\rightarrow x_0}
     \frac{d_Y(f(x),f(x_0))}{d_X(x,x_0)} &  \hbox {if $x_0$ is a non-isolated point}, \\
0  & \hbox {if $x_0$ is an isolated point} .
\end{array}
\right.
\end{equation}
\end{definition}
\noindent It is readily seen  that $\lip(f)$    is a measurable  function. The pointwise Lipschitz constant leads to a global Lipschitz's constant  on convex sets. 

   \begin{lemma}[Lemma 14.4 of \cite{nonlinear}]
  \label{lip-lemma}
Let $L>0$,   $X\subset \R^N$  a  convex set  and let $f:X\rightarrow (Y,d)$ be a function such that
 $\lip(f)(x)\leq L$  $\forall x\in X$. Then $f$ is $L$-Lipschitz.
\end{lemma}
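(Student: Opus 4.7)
The plan is to reduce the claim to a one-dimensional continuation argument along the segment joining any two points of $X$. Fix $x, y \in X$ with $x \neq y$ and, using convexity, set $\gamma(t) := (1-t)x + ty$ for $t \in [0,1]$, so that $\gamma(t) \in X$ and $|\gamma(t) - \gamma(s)| = |t - s|\,|y-x|$. Define $g : [0,1] \to \R$ by $g(t) := d(f(\gamma(t)), f(\gamma(0)))$, so the target estimate becomes $g(1) \leq L|y-x|$.

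First I would show that $g$ is continuous on $[0,1]$. The reverse triangle inequality gives $|g(t) - g(s)| \leq d(f(\gamma(t)), f(\gamma(s)))$, and the hypothesis $\lip(f)(\gamma(s)) \leq L$ supplies, for every $\eta > 0$, a neighborhood of $\gamma(s)$ in which $d(f(z), f(\gamma(s))) \leq (L + \eta)|z - \gamma(s)|$. Rescaling via $|\gamma(t) - \gamma(s)| = |t-s|\,|y-x|$ yields $|g(t) - g(s)| \leq (L + \eta)|t - s|\,|y-x|$ for $t$ close to $s$, which forces continuity of $g$ and will also drive the continuation.

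Next, fix $\varepsilon > 0$ and consider
\[ A_\varepsilon := \{ t \in [0,1] : g(t) \leq (L + \varepsilon)|y-x|\,t + \varepsilon \}. \]
I would prove $A_\varepsilon = [0,1]$ by a sup-argument. Continuity of $g$ makes $A_\varepsilon$ closed, and clearly $0 \in A_\varepsilon$. Set $T := \sup A_\varepsilon$, so $T \in A_\varepsilon$. Assume, for contradiction, $T < 1$. The point $\gamma(T) \in X$ is non-isolated since the segment is non-degenerate, so the bound $\lip(f)(\gamma(T)) \leq L$ applies and provides $h > 0$ with $d(f(\gamma(T+h)), f(\gamma(T))) \leq (L + \varepsilon/2)|y-x|\,h$. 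Combining this with $g(T+h) \leq g(T) + d(f(\gamma(T+h)), f(\gamma(T)))$ and the membership $T \in A_\varepsilon$, one obtains $T + h \in A_\varepsilon$, contradicting the definition of $T$. Therefore $1 \in A_\varepsilon$, so $g(1) \leq (L + \varepsilon)|y-x| + \varepsilon$, and letting $\varepsilon \to 0$ closes the argument.

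The main delicate point is the local-to-global upgrade: turning the pointwise $\limsup$ information of Definition~\ref{plc} into an additive bound along the entire path. Convexity is essential here because it guarantees that the whole segment lies in $X$, so that the pointwise hypothesis can be invoked at every $\gamma(T)$; on a non-convex domain the same method would only yield an estimate in terms of the length of a rectifiable curve joining $x$ and $y$ inside $X$.
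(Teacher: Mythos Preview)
Your proof is correct. The paper, however, does not prove this lemma at all: it is stated as Lemma~14.4 of \cite{nonlinear} (Benyamini--Lindenstrauss) and simply quoted without argument. So there is no ``paper's own proof'' to compare against; you have supplied a self-contained proof where the authors relied on a citation.

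For completeness: the argument you give --- parametrize the segment, use the pointwise $\limsup$ bound to get local $(L+\varepsilon)$-Lipschitz control, and run a closed-set continuation along $[0,1]$ --- is essentially the standard proof of this fact (and is the one in \cite{nonlinear}). One cosmetic remark: the additive $+\varepsilon$ in the definition of $A_\varepsilon$ is harmless but unnecessary, since $g(0)=0$ already places $0$ in the set $\{t:g(t)\le(L+\varepsilon)|y-x|\,t\}$; the argument goes through verbatim without it. Your closing paragraph on why convexity is essential (and how the argument generalizes to quasi-convex spaces with the extra factor $C$) matches exactly the remark the paper makes immediately after the lemma.
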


 A  result similar to the previous lemma   holds true  for
 \emph{quasi-convex} metric spaces $X$ (see \cite{point-lip}).\\ 
A metric space $(X,d)$ is $C$-quasi-convex if
there exists a constant $C > 0$ such that for each pair of points $x, y \in X$, there exists a curve $\gamma$  connecting $x$ and $y$ with
$l(\gamma)\leq Cd(x, y)$. As one can expect, a metric space is quasi-convex if, and only if, it is bi-Lipschitz homeomorphic to some
length space. For $X$ $C$-convex, the function $f$ of Lemma \ref{lip-lemma} is just $CL$-Lipschitz. 

 The pointwise Lipschitz constant is also related to the notion of metric differential (see \cite{a-t, area-metric, kirc, diff-l}). 
 A function $f:X\subset \R^N \rightarrow (Y,d)$ is said to be metrically differentiable at a point $x_0 \in X$ if there exists a (unique) 
  on $\R^N$, denoted by $MD(f,x_0)$,  such that for every $y, z\in X$ the following formula holds true
 \begin{equation}\label{m-diff} d(f(y),f(z))-MD(f,x_0)(y-z)=o(\|y-x_0\|+\|z-x_0\|). \end{equation}
Let  $U\subset \R^N$ be an open set and let $f:U\rightarrow (Y,d)$ be a Lipschitz function. Hence, for every fixed $p\in Y$ the function 
\begin{equation}
x\mapsto d(f(x),p):U\rightarrow \R_+
\end{equation} 
 is a Lipschitz function and  by Rademacher Theorem it is a.e. differentiable
in $U$.  Moreover (see \cite{ area-metric, kirc}),  it turns out that $f$ is  \emph{metrically} differentiable at almost every point. 
 
   The following lemma  establishes a  link between the pointwise Lipschitz constant,  the distance function (see \cite{a-k} for dual Banach spaces) and the metric differential (see \eqref{md} below).

\begin{lemma}\label{diff-lip} Let $f:U\subset \R^N \rightarrow (Y,d)$ be a Lipschitz function over a separable metric space $Y$. Then, for a.e. $x_0\in  U$  it results
 \begin{equation}\label{suppo1}
 \lip(f)(x_0)= \sup _{p\in Y}\|\nabla d(f(x_0),p)\|.
\end{equation}
\end{lemma}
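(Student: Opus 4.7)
The plan is to establish the identity by proving matching upper and lower bounds.

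For the upper bound $\sup_{p\in Y}\|\nabla d(f(x_0),p)\|\leq\lip(f)(x_0)$, I fix $p\in Y$ and write $\phi_p:=d(f(\cdot),p)$. The reverse triangle inequality yields $|\phi_p(x)-\phi_p(x_0)|\leq d(f(x),f(x_0))$, so $\lip(\phi_p)(x_0)\leq\lip(f)(x_0)$. At every $x_0$ of classical differentiability of $\phi_p$ one has $\|\nabla\phi_p(x_0)\|=\lip(\phi_p)(x_0)$, and taking the supremum over $p$ produces the bound.

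The reverse inequality rests on the separability of $Y$. I choose a countable dense $\{p_n\}\subset Y$. By Rademacher each Lipschitz function $\phi_{p_n}$ is classically differentiable on a full-measure subset of $U$; taking a countable intersection and intersecting with the full-measure set from Kirchheim's metric differentiability theorem (\cite{area-metric,kirc}), at a.e.\ $x_0$ every $\nabla\phi_{p_n}(x_0)$ exists, $f$ is metrically differentiable with seminorm $MD(f,x_0)$ on $\R^N$, and the convergence $d(f(x_0+tv),f(x_0))/t\to MD(f,x_0)(v)$ is uniform over $\|v\|=1$; in particular $\lip(f)(x_0)=\sup_{\|v\|=1}MD(f,x_0)(v)$. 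The problem then reduces to the pointwise identification
\begin{equation*}
MD(f,x_0)(v)=\sup_n|\nabla\phi_{p_n}(x_0)\cdot v|\qquad\text{for all } v\in\R^N,
\end{equation*}
at such $x_0$, after which maximizing over $\|v\|=1$ yields $\lip(f)(x_0)=\sup_n\|\nabla\phi_{p_n}(x_0)\|\leq\sup_{p\in Y}\|\nabla\phi_p(x_0)\|$, matching the upper bound.

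The $\geq$ direction of the identification is immediate from the reverse triangle inequality applied to difference quotients of $\phi_{p_n}$. For the $\leq$ direction I use density: for small $t>0$ pick $n(t)$ with $d(p_{n(t)},f(x_0+tv))\leq t^2$; then
\begin{equation*}
\frac{|\phi_{p_{n(t)}}(x_0+tv)-\phi_{p_{n(t)}}(x_0)|}{t}\;\geq\;\frac{d(f(x_0),f(x_0+tv))}{t}-2t\;\longrightarrow\;MD(f,x_0)(v)\quad(t\to 0^+).
\end{equation*}

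The main technical obstacle is commuting $\sup_n$ with $\lim_{t\to 0^+}$: the displayed bound controls a \emph{varying} difference quotient, not a fixed gradient. I would close this gap by a diagonal extraction exploiting two uniformities, namely that the family $\{\phi_{p_n}\}$ is equi-Lipschitz (with constant bounded by $\lip(f)$) and that Kirchheim's theorem gives the above convergence uniformly in $\|v\|=1$. These ingredients should produce a subsequence $(p_{n_k},t_k)$ along which $t_k$ falls inside the radius of linear approximation of $\phi_{p_{n_k}}$ at $x_0$, forcing the difference quotient and $|\nabla\phi_{p_{n_k}}(x_0)\cdot v|$ to differ by at most $o(1)$ and thereby closing the argument.
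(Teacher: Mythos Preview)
Your upper bound and the overall reduction to the metric differential are fine and match the paper. The gap is exactly where you flag it: the ``diagonal extraction'' in the last paragraph is not a proof. Having the difference quotient of $\phi_{p_{n(t)}}$ large at scale $t$ tells you nothing about $\nabla\phi_{p_{n(t)}}(x_0)\cdot v$, because the rate at which the difference quotient of $\phi_{p_n}$ approaches its gradient at $x_0$ depends on $n$, and equi-Lipschitzness gives no uniform control on that rate. Kirchheim's uniformity in $v$ is about $d(f(x_0+tv),f(x_0))/t$, not about the individual $\phi_{p_n}$, so neither of your two ``uniformities'' addresses the actual obstruction. As written, the argument does not close.

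The paper bypasses this commutation issue entirely with a Lebesgue-point trick. Along a fixed direction $v$ set $\varphi_n(t)=\phi_{p_n}(x_0+tv)$ and $m(t)=\sup_n|\dot\varphi_n(t)|$; since each $\varphi_n$ is Lipschitz one has
\[
\frac{d(f(x_0+tv),f(x_0))}{t}=\sup_n\frac{|\varphi_n(t)-\varphi_n(0)|}{t}\leq \sup_n\frac{1}{t}\int_0^t|\dot\varphi_n(s)|\,ds\leq\frac{1}{t}\int_0^t m(s)\,ds.
\]
At a point where all $\nabla\phi_{p_n}(x_0)$ exist and $0$ is a Lebesgue point of $m\in L^\infty$, the right-hand side tends to $m(0)=\sup_n|\nabla\phi_{p_n}(x_0)\cdot v|$, which gives $MD(f,x_0)(v)\leq\sup_n\|\nabla\phi_{p_n}(x_0)\|$ directly. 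The point is that the fundamental theorem of calculus converts the problematic pointwise limit into an integral average, and the Lebesgue differentiation theorem then does the work that your diagonal argument could not.
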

\begin{proof}
We assume  $\|\nabla d(f(x_0),p)\|=0$ if the function $x\mapsto d(f(x),p)$ is not 
differentiable at $x_0$. For $p\in Y$ we compute
\begin{equation*} \begin{split}\langle \nabla d(f(x_0),p),v\rangle  &= \lim_{t\rightarrow 0^+}\frac{d(f(x_0+tv),p)-d(f(x_0),p)}{t} \\
&\leq \lim_{t\rightarrow 0^+}\frac{d(f(x_0+tv),f(x_0))}{t} \leq \lip(f)(x_0).\end{split}\end{equation*}
Taking the supremum with respect to $v$ and then respect to $p$, we have $$\sup _{p\in Y}\|\nabla d(f(x_0),p)\|\leq \lip(f)(x_0).$$ 
  To get the opposite inequality, we use a slight modification of the proof of \cite[Theorem 4.1.6]{a-t}.
Since $Y$  is separable, we fix  a countable dense set $\{p_n\}\subset Y$, then for every $x_1,x_2\in U$ we have 
\begin{equation}\label{sup}d(f(x_1),f(x_2))= \sup _n | d(f(x_1),p_n)-d(f(x_2),p_n)|.\end{equation}
Consider the Lipschitz function  $\varphi _n(t)=d(f(x_0+tv),p_n)$ and   set $m(t)=\sup _n |\dot \varphi _n(t) |$. Observe that $|m(t)|\leq \lip (f)$. By the Lipschitz condition, we may suppose that, for every $n\in \N$, $t=0$  is a differentiability point for $\varphi_n$  and  also $t=0$ is a Lebesgue point for $m \in L^\infty$. 
By \eqref{sup} we obtain
$$\frac{d(f(x_0+tv),f(x_0))}{t} \leq \sup_n \frac{1}{t}\int _0^t|\dot \varphi _n(s) |ds\leq \frac{1}{t}\int_0^t m(s) ds .$$
Letting $t \rightarrow 0^+$ we get (see Prop. 1  and Th. 2 of \cite{kirc}) \begin{equation}\label{sup2}MD(f,x_0)(v)\leq m(0)\leq \sup_n \|\nabla d(f(x_0),p_n)\|.\end{equation}
On the other hand, by \eqref{m-diff} we get
$$ \frac{d(f(x),f(x_0))}{\|x-x_0\|}=MD(f,x_0)\left (\frac{x-x_0}{\|x-x_0\|}\right )+\frac{o(\|x-x_0\|)}{\|x-x_0\|}$$
Letting $x\rightarrow x_0$, by \eqref{sup2} we get
$$\lip(f)(x_0)\leq \sup_n \|\nabla d(f(x_0),p_n)\|.$$
\end{proof}
Observe that by the proof of the previous lemma, the following equality also holds true
\begin{equation}\label{md}
  \lip(f)(x_0)=\sup_{v\in \R^N,\; \vert v\vert =1} |MD(f,x_0)(v)|.
  \end{equation}
 \begin{lemma}\label{lsc}
Let $(Y,d)$ be a separable metric space. Assume $U\subset \R^N$ is an open set,   $(f_n)_{n\in\N}$ be a sequence
of (locally) equi-Lipschitz functions $f_n:U\rightarrow(Y,d)$  
and let $f:U\rightarrow(Y,d)$.
 If $f_n\rightarrow f$ (locally) uniformly on $U$  then
\begin{equation} 
 \int_U\lip(f)(x) \ dx \leq \liminf _{n\rightarrow + \infty}\int_U \lip(f_n)(x) \ dx .\end{equation}
 \end{lemma}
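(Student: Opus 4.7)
The strategy is to reduce the lower semicontinuity for metric-valued maps to the classical lower semicontinuity of the $L^1$-norm of the gradient for real-valued Lipschitz functions, via Lemma~\ref{diff-lip}, and then to recover the pointwise supremum through a measurable-partition duality argument.

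First, I fix a countable dense sequence $\{p_m\}\subset Y$ and set
\[
g^m(x):=d(f(x),p_m),\qquad g_n^m(x):=d(f_n(x),p_m).
\]
Since $d(\cdot,p_m)$ is $1$-Lipschitz, the $g_n^m$ are locally equi-Lipschitz and converge locally uniformly to $g^m$; hence, by Banach--Alaoglu in $W^{1,\infty}_{\mathrm{loc}}$, one has $\nabla g_n^m\rightharpoonup \nabla g^m$ weakly-$*$ in $L^\infty_{\mathrm{loc}}(U;\R^N)$. The same $1$-Lipschitz composition inequality gives the pointwise bound $|\nabla g_n^m(x)|\le \lip(f_n)(x)$ for a.e.\ $x$. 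Finally, $f$ itself is locally Lipschitz as a locally uniform limit of locally equi-Lipschitz maps, so Lemma~\ref{diff-lip} applied to $f$ yields $\lip(f)(x)=\sup_m|\nabla g^m(x)|$ for a.e.\ $x\in U$.

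Next I fix $M\in\N$ and an open set $V\Subset U$. For any Borel partition $V=\bigsqcup_{m=1}^M A_m$ and any bounded measurable $w_m:A_m\to\R^N$ with $|w_m|\le 1$, the compactly supported vector field $\sum_{m}\mathbf{1}_{A_m}w_m$ lies in $L^1(U;\R^N)$; testing it against the weak-$*$ convergence, summing over $m$, and using the pointwise inequality $\sum_m \mathbf{1}_{A_m}w_m\cdot \nabla g_n^m\le \lip(f_n)$ gives
\[
\sum_{m=1}^M\int_{A_m} w_m\cdot \nabla g^m\,dx \;=\; \lim_{n\to\infty}\sum_{m=1}^M \int_{A_m}w_m\cdot \nabla g_n^m\,dx \;\le\; \liminf_{n\to\infty}\int_U \lip(f_n)\,dx.
\]
Taking the supremum over the $w_m$ with $|w_m|\le 1$ and choosing the partition so that $A_m$ coincides with the set where $|\nabla g^m|$ attains $\max_{j\le M}|\nabla g^j|$ (breaking ties arbitrarily) converts the left-hand side into $\int_V \max_{m\le M}|\nabla g^m|\,dx$.

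The proof is then concluded by two applications of monotone convergence: first $M\to\infty$, using $\max_{m\le M}|\nabla g^m|\nearrow \lip(f)$ a.e., and then exhausting $U$ by open sets $V\Subset U$. The main obstacle in this route is precisely the interchange of the supremum with the integral: a naive Fatou argument is unavailable, since the pointwise inequality $\liminf_n \lip(f_n)(x)\ge \lip(f)(x)$ is false in general (oscillations of the $f_n$ can force $\lip(f_n)(x)$ to drop below $\lip(f)(x)$ at specific points, as the standard mollification of $f(x)=|x|$ at the origin shows). This is why the duality/measurable selection argument, converting the supremum into a supremum over affine test pairings, is essential.
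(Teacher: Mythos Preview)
Your proof is correct and follows the same overall strategy as the paper: reduce to the real-valued distance functions $g^m(\cdot)=d(f(\cdot),p_m)$ via Lemma~\ref{diff-lip}, use the weak-$*$ convergence $\nabla g_n^m\rightharpoonup \nabla g^m$ in $L^\infty_{\mathrm{loc}}$, and bound $|\nabla g_n^m|\le \lip(f_n)$ pointwise.

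The difference lies in the final step, the passage from the individual estimates for each $p_m$ to the supremum $\lip(f)=\sup_m|\nabla g^m|$. The paper argues that one ``may assume $\lip(f)(x)=\lim_n g_n(x)$'' and then invokes dominated convergence to conclude $\int_U\lip(f)=\lim_n\int_U g_n\le \liminf\int_U\lip(f_n)$; this is brief and leaves implicit how a single enumeration of the $p_n$ can realise the supremum as a pointwise limit simultaneously for a.e.\ $x$. Your measurable-partition duality argument bypasses this issue entirely: by testing the finitely many weak-$*$ limits against $\sum_m \mathbf 1_{A_m}w_m$ and optimising over the partition and the unit vectors $w_m$, you obtain $\int_V\max_{m\le M}|\nabla g^m|\le \liminf_n\int_U\lip(f_n)$ directly, and then monotone convergence in $M$ and in $V\Subset U$ is straightforward. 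This is a cleaner and fully rigorous treatment of exactly the point you identify as the main obstacle; the price is a slightly longer argument, but it avoids any ambiguity about interchanging the countable supremum with the integral.
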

 \begin{proof}
By uniform convergence   $f$ is a (locally) Lipschitz function, moreover  
 we have that 
$d(f_n(\cdot ), p) \rightharpoonup d(f(\cdot ), p)$ weakly$*$ in $W_{loc}^{1,\infty}(U)$.
Therefore, for every $p$, by weak l.s.c. of the gradient norm (see also Ch. III Th. 3.3 of \cite{b-d}) and using \eqref{suppo1} we get 
$$ \int _U \|\nabla d(f(x ), p)\| dx \leq \liminf_{n\rightarrow + \infty} \int _U \|\nabla d(f_n(x ), p )\| dx\leq \liminf_{n\rightarrow + \infty} \int _U \lip(f_n)(x) \ dx  .$$ 
Since $Y$ is separable, as in the proof of Lemma \ref{diff-lip},  denoting by $g_n=\|\nabla d(f(x ), p_n)\|$, by \eqref{suppo1} we may assume that $\lip(f)(x)=\lim_n g_n(x)$. Moreover, observe that 
$|g_n(x)|\leq \lip (f)$. Hence, passing to the limit under the integral sign we finally obtain 
$$ \int _U \lip(f)(x) \ dx = \lim _{n\rightarrow + \infty} \int _U g_n(x) dx 
\leq \liminf_{n\rightarrow + \infty} \int _U \lip(f_n)(x) \ dx .$$
 \end{proof}
 \begin{remark}
 The above Lemma holds true  of course for the function $\lip^p(f)$, for any $p\geq 1$. If $Y\subset \R^N$, the uniform convergence can be replaced by the weak convergence on the Sobolev space $W^{1,p}(U)$. In such a case, 
 Lemma~\ref{lsc}  just states  the lower semicontinuity of the $p$-Dirichlet energy in Sobolev spaces, since if $u:\R^N\rightarrow \R^N$ is differentiable at $x$ then 
$\lip(u)(x)=\|\nabla u(x)\|$ (see also  \cite[Ch. 3 Theorem 3.3]{b-d} and \cite{res-sob} for a related semicontinuity result).
 \end{remark}

\section{Reformation maps}
In this section we  introduce the class of {\em reformation maps} and establish some properties of these functions. Though the definition of reformation map holds for general metric measure spaces, as a first step we restrict our analysis to the euclidean framework of subsets of  $\R^N$.\\
Let $\Omega\subset\R^N$  be an open bounded connected set,  $X=\overline\Omega$,   
 $Y\subset \R^N$,     $\mu\in \P(X)$ and $\nu\in\P(Y)$. 

\begin{definition}[Expansion and contraction energy]
\label{c-e}
Let 
$x_0\in X$ and  $u:X\rightarrow Y$.  
The pointwise expansion  energy of $u$ at $x_0$ is  defined by

\begin{equation}
\label{expansion}
    e_u(x_0):=\lip(u)(x_0)= \limsup_
    {x\rightarrow x_0}
    \frac{|u(x)-u(x_0)|}{|x-x_0|}.
    \end{equation} 
    The pointwise contraction energy  of $u$ at $x_0$  is defined by
\begin{equation}
  c_u(x_0):= \limsup_
    {x\rightarrow x_0}
     \frac{|x-x_0|}{|u(x)-u(x_0)|}.
 \end{equation} 
 The pointwise reformation energy  of $u$ at $x_0$ is defined  by
\begin{equation}
r_u(x_0) =e_u(x_0)+ c_u(x_0).
\end{equation}
\end{definition}


\begin{definition}[Reformation maps]
\label{def-ref}
Given   $H,K:X\rightarrow  ]0,+\infty [$,  $H,K \in L^1(X, \mu)$ and a fixed covering $\mathcal A$ of $X$ made by balls, 
  we define the set of reformation maps between $\mu$ and $\nu$, which we shall  denote by $\Refo(\mu;\nu)^{H,K}$, as the set of maps  
 $u:X\rightarrow Y$ 
such that the following conditions hold true:
\begin{equation}
\label{push}
  u_\#\mu=\nu,
 \end{equation} 
\begin{equation}
\label{bounds}
\forall x\in X \:\: \exists B(x,r)\in \mathcal A \: \: s.t. \: \: c_u(y)\leq H(x), \: \: e_u(y)\leq K(x) \: \: \forall y\in \overline B(x,r)\cap \Omega, 
\end{equation}
where $ u_\#\mu$ is the probability measure on $Y$ defined by $ u_\#\mu(A)=\mu(u^{-1}(A))$ for every Borel set $A$ of $Y$.
\end{definition}
 The point in the above definition is that the functions $c_u, e_u$ are locally bounded from the above by  $H(x),K(x)$ which may depend just on the point $x$ an not by the map $u\in \Refo(\mu;\nu)^{H,K}$.
 Therefore, the reformation maps  are characterized by  locally uniformly  bounded expansion and contraction.
Notice that, by the bounds \eqref{bounds}, any $u\in \Refo(\mu;\nu)^{H,K}$ is continuous and, by Stepanov  Theorem (see \cite{lip}),  
 is a.e. differentiable in $\Omega$. In particular, by Lemma \ref{lip-lemma} reformation maps are locally Lipschitz on $\Omega$. 
 \begin{remark}
 In a  mechanical perspective, the constraints stated in \eqref{bounds} could be considered as a bound on the maximum expansion or contraction experienced  by the material $\Omega$.  In this setting, the assumption that the bounds $H(x),K(x)$ do not depend on the map $u$ in \eqref{bounds} corresponds to  a constitutive property of the material under consideration.
 We point out that  bounds like \eqref{bounds} are in some sense necessary to control the geometry of the reformations. For instance, 
  in the case of $\nu=\delta_{y_0} $ we have $e_u=0$, $c_u=+\infty$ for any map $u$ satisfying \eqref{push}. On the other hand, mapping a bar into a bended one (see Fig. $2.1$) by two piecewise isometries $u_1$, $u_2$ such that $u_1(x_0)\neq u_2(x_0)$, we necessarily create a \emph{fracture} at the point $x_0$. It results $e_u(x_0)=+\infty$ at the discontinuity point $x_0$. See also Example \ref{ex}.
  Therefore, roughly speaking, the bound $c_u\leq H$ means  no implosion , while $e_u\leq K $ means no fractures.
 \begin{figure}[htbp]\label{fig:ben-bar}
      \includegraphics[scale=0.8]{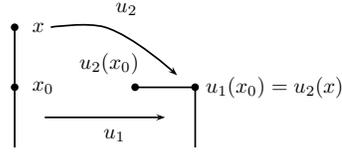}
\caption{Mapping a bar into a bended one.}
\end{figure}
 \end{remark}
 \begin{remark}\label{inversion}
 The constraint $c_u\leq H$ in \eqref{bounds} is related to inversion properties, both local or global, of reformation maps, see 
 \cite{garrido-global, metric-local, john}. Observe that for differentiable maps with non-vanishing Jacobian we always have  
 $$c_u=\|(\nabla u) ^{-1}\|, \quad e_u=\|\nabla u \|.$$  
 
 Therefore, it would be an interesting question to consider just pointwise bounds $H(x),K(x)$. This is similar in spirit to the passage from functions with bounded distortion to functions with finite distortion (see the monograph \cite{iwaniec}). However, in such a case, inversion properties becomes more subtle and further assumptions are needed, see for instance \cite{inver-sob-distortion, inverse-sobolev, putten} for inversion results of Sobolev maps. Anyway, the main interest of reformation maps relies in this perspective in considering just metric objects (see Section $4$ below).
 However, in a purely metric framework, 
 such pointwise conditions are not enough to guarantee inversion properties. Consider for instance the map $u:\R \rightarrow \R, u(x)=|x|$ having $e_u=c_u=1$ at every point. Therefore, also uniform bounds alone are not enough to get satisfactory inversion properties. Under differentiability assumptions in $\R^N$,  pointwise bounds are in fact enough, see Lemma \ref{det}. However, in general  this is not true.  For instance (see \cite{inversion}),  it is possible to find everywhere differentiable maps with everywhere invertible differential on Hilbert spaces which are not neither open or locally one-to-one. 
 In the metric setting different restrictions arise (see \cite{metric-local} for a detailed discussion). 
 \end{remark}
 \begin{remark}\label{quasi}
 The local uniform bounds in Def. \ref{def-ref} are also related to quasi-isometries, see for instance \cite{ nonlinear,john}. In such case uniform bounds 
 $$m\leq D^-f(x)\leq D^+f(x)\leq M, $$
 where
 $$  D^-f(x_0)=\liminf_{x\rightarrow x_0}\frac{|f(x)-f(x_0)|}{|x-x_0|}, \qquad D^+f(x_0)=\limsup_{x\rightarrow x_0}\frac{|f(x)-f(x_0)|}{|x-x_0|}$$
 denote the local distortion of distances, are required. Observe that $e_u(x)= D^+f(x)$, while $c_u(x)= \frac{1}{D^-f(x_0)}$. Indeed, 
 $$ \frac{|x-x_0|}{|f(x)-f(x_0)|} = \frac{1}{\frac{|f(x)-f(x_0)|}{|x-x_0|}} \leq \frac{1}{\inf _B\frac{|f(x)-f(x_0)|}{|x-x_0|}}.$$
 Taking the supremum over $B=B(x_0,r)$ and then letting $r\rightarrow 0^+$ we get $c_u(x)\leq \frac{1}{D^-f(x_0)}$.
 In a similar way the opposite inequality follows. 
 
 \end{remark} 
 
 
 \begin{remark}
 The mass conservation property \eqref{push} is a generalized version of incompressibility  and it can be always satisfied (provided $\mu$ has no atom, see \cite{pratelli}) by some measurable map $u$. Actually, condition \eqref{push} is equivalent to the following change of variable formula
 \begin{equation}\label{push-eq} 
 \int_X f(u(x))\ d\mu =\int_Y f(y)\ d\nu, \end{equation} for every continuous function $f:Y\rightarrow \R$. 
In the setting of Mass Transportation, maps $u$ satisfying \eqref{push} are called \emph{transport maps}.
 \end{remark}
 \begin{remark}
  Since   $\Refo(\mu;\nu)^{H,K}$ is made by \emph{nice} functions, 
 to compare the present approach with other  ones, as for instance that of  \cite{wolan}, 
 one should also require the condition $u(X)=Y$ in the case of $\mu =\L^N\res X$, $\nu =\L^N\res Y$. However, we point out that this surjection requirement is actually a severe constraint. Indeed,  for instance, to find   
 Lipschitz functions  $u:X\rightarrow Y, \ u(X)=Y$ for general compact sets in dimension  $N\geq 3$  (see \cite{alberti}), as far as we know, is still an open question. 
 Moreover, also bi-Lipschitz functions between \emph{nice} sets are not easy to find (see \cite{fonseca-parry, bilip}). 
 Anyway, in such a case many restrictions  on the target space $Y$ may be needed  (connectedness, for instance).
 \end{remark}
 In the following we prove some properties enjoyed by reformation maps. A first estimate easy to verify is an immediate consequence of  
 \eqref{bounds} and is given by the following proposition.
 
  \begin{prop}
  \label{constraint}
  Let $u\in \Refo (\mu;\nu)^{H,K}$. Then, for every  $ x_0 \in X$  there exists   $r>0$ 
 such that
 \begin{equation}
 \label{rub} 
 \ \frac{1}{H(x_0)}|x-x_0|\leq |u(x)-u(x_0)|\leq K(x_0)|x-x_0|\:\:\:\: \:\forall \ \!x\!\in\! X\cap \overline B(x_0,r).
 \end{equation} 
 \end{prop}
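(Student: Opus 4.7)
The plan is to read \eqref{rub} as the conjunction of an upper and a lower bound, and to obtain each one from the corresponding clause of \eqref{bounds}. First I would invoke the covering property at $x_0$ to produce $\rho>0$ with $e_u(y)\leq K(x_0)$ and $c_u(y)\leq H(x_0)$ for every $y\in \overline B(x_0,\rho)\cap\Omega$. After possibly shrinking $\rho$ I may also arrange $B(x_0,\rho)\subset\Omega$ when $x_0$ is an interior point; for $x_0\in\partial\Omega$ one appeals to the local quasi-convexity of $\Omega$ and to the quasi-convex variant of Lemma \ref{lip-lemma} mentioned right after its statement.

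For the upper bound the observation is that on the convex ball $B(x_0,\rho)$ the pointwise Lipschitz constant $\lip(u)=e_u$ is uniformly bounded by $K(x_0)$, so Lemma \ref{lip-lemma} yields at once the global estimate $|u(x)-u(x_0)|\leq K(x_0)|x-x_0|$ on $X\cap \overline B(x_0,\rho)$. For the lower bound I would simply unpack the pointwise definition of the contraction energy at $x_0$ itself: since
\[
c_u(x_0)=\limsup_{x\to x_0}\frac{|x-x_0|}{|u(x)-u(x_0)|}\leq H(x_0),
\]
there exists $\rho'\leq\rho$ such that $|x-x_0|/|u(x)-u(x_0)|\leq H(x_0)$ for every $x\in X\cap B(x_0,\rho')\setminus\{x_0\}$, and rearranging gives the lower estimate on the ball of radius $r=\min(\rho,\rho')$.

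The step requiring care is precisely the lower bound. Strictly from the $\limsup$ one only extracts $|u(x)-u(x_0)|\geq |x-x_0|/(H(x_0)+\eps)$ on a neighborhood whose radius shrinks with $\eps$, rather than an inequality with the exact constant $H(x_0)$. This infinitesimal discrepancy is harmless in the sequel, where \eqref{rub} is used only as a qualitative bi-Lipschitz control of reformation maps on small balls: one either reads \eqref{rub} with $H(x_0)+\eps$ in place of $H(x_0)$ on a ball of radius $r=r(\eps)$, or one slightly enlarges the constitutive function $H$ so as to absorb the excess $\eps$. No other ingredient beyond \eqref{bounds}, Lemma \ref{lip-lemma} and the pointwise definitions of $e_u$ and $c_u$ is needed.
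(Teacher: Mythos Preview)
Your approach matches the paper's: the paper offers no argument beyond calling \eqref{rub} ``an immediate consequence of \eqref{bounds}'', and your derivation---Lemma~\ref{lip-lemma} on the convex ball for the upper inequality, the definition of $c_u(x_0)$ for the lower one---is exactly the intended unpacking. Your observation that the $\limsup$ alone only yields the lower bound with $H(x_0)+\eps$ is a genuine point the paper glosses over, and your remark that this is harmless for every subsequent use of \eqref{rub} is correct.
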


 \begin{lemma}
 \label{discreta}
 Let $u\in \Refo(\mu;\nu)^{H,K}$. Then $u$ is a discrete map, i.e. for every $y\in Y$, $u^{-1}(y)$ is a finite set.
 \end{lemma}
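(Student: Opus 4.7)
The plan is to leverage the local bi-Lipschitz estimate from Proposition \ref{constraint} to show that each fiber $u^{-1}(y)$ consists of isolated points in the compact set $X=\overline\Omega$, which will force finiteness.

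First, I would observe that $X=\overline\Omega$ is compact since $\Omega$ is open and bounded, and that any $u\in\Refo(\mu;\nu)^{H,K}$ is continuous (this is noted explicitly in the text as a consequence of \eqref{bounds}), so $u^{-1}(y)$ is a closed subset of the compact set $X$, hence itself compact.

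Next, I would use Proposition \ref{constraint} to obtain local injectivity: fix any $x_0\in u^{-1}(y)$. By \eqref{rub} there exists $r=r(x_0)>0$ such that
\begin{equation*}
\frac{1}{H(x_0)}|x-x_0|\leq |u(x)-u(x_0)| \quad \forall x\in X\cap \overline B(x_0,r).
\end{equation*}
Since $H(x_0)<\infty$, any $x\in X\cap\overline B(x_0,r)$ with $u(x)=u(x_0)=y$ must satisfy $|x-x_0|\leq H(x_0)\cdot 0 = 0$, hence $x=x_0$. This shows that every point of $u^{-1}(y)$ is isolated in $u^{-1}(y)$.

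Finally, a compact set in a metric space all of whose points are isolated is necessarily finite: the open cover $\{X\cap\overline B(x_0,r(x_0)):x_0\in u^{-1}(y)\}$ of $u^{-1}(y)$ (more precisely, the open balls $B(x_0,r(x_0))$) admits a finite subcover by compactness, and each ball contains only one point of $u^{-1}(y)$, so $u^{-1}(y)$ is finite. The main conceptual step, and really the only subtle point, is recognizing that the local lower bound $c_u\leq H$ translates via Proposition \ref{constraint} into honest local injectivity; the rest is a straightforward compactness argument. I do not expect any serious obstacle since Remark \ref{inversion} already hints that such pointwise bounds, under the Lipschitz regularity available here in $\R^N$, suffice to prevent the map from collapsing two nearby points to the same image.
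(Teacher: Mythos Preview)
Your proof is correct and follows essentially the same route as the paper: use the lower bound in \eqref{rub} to see that every point of $u^{-1}(y)$ is isolated, then invoke compactness of $X$ (together with continuity of $u$) to conclude finiteness. The only cosmetic difference is that the paper finishes with a sequential-compactness contradiction while you use a finite-subcover argument, which amounts to the same thing.
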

 \begin{proof}Let $y\in Y$. By \eqref{rub}, if $x_0\in u^{-1}(y)$ we have that $u(x)\neq u(x_0)$ for every $x\in \overline B(x_0,r)$. Hence, $x_0$ is an isolated point of $u^{-1}(y)$. We claim that $u^{-1}(y)$ is a finite set. Indeed, otherwise, since $X$ is compact, we find a sequence $x_n\rightarrow x_0 \in X$ such that $x_n\in u^{-1}(y)$. By continuity of $u$ we have  also $x_0\in u^{-1}(y)$. This is a contradiction since $x_0$ is  an isolated point of $u^{-1}(y)$.
 \end{proof}

 For discrete continuous  maps  the \emph{local degree} or \emph{local index}  $i(x_0,u)$
 of $u:X\rightarrow Y$ at $x_0\in X$ is  defined  
 as follows
 \begin{equation}
 i(x_0,u)={\rm deg}(u(x_0),u,\overline B(x_0,r)), 
  \end{equation}
 where ${\rm deg}(y,u,B)$ denotes the topological degree (see  \cite{f-g, b-d}).\\
Let  $u^{-1}(y)=\{x_1,\cdots, x_h \}$, we have the following relation
 \begin{equation}\label{sum} {\rm deg}(y,u,Y)=\sum_{j=1}^ h i(x_j,u).
 \end{equation} 
   Observe that if $u$ is locally injective in a neighborhood of $x\in X$
    then $|i(x,u)|=1$.\\
  We say that $u$ is a \textit{sense-preserving (reversing) continuous map} if the local index $i(x,u)$ has constant sign  in $X$.
  Notice  that each homeomorphism on a domain is either sense-preserving or sense-reversing (see  \cite[Theorem 3.35]{f-g}). Moreover,  sense-preserving or sense-reversing  differentiable maps have   constant Jacobian sign,  (see   \cite[Lemma 5.9]{f-g}), since
 \begin{equation}
 \label{jac}
 i(x_0,u)=sign(Ju(x_0)), 
 \end{equation} where $Ju:={\rm det} \nabla u$, 
 providing   $Ju(x_0)\neq 0$.\\

\begin{lemma}
 \label{det} 
 Let 
 $u\in \Refo(\mu;\nu)^{H,K}$.  If $u$ is  differentiable at $x_0 \in \Omega$  then   $J u(x_0)\neq 0$.
 \end{lemma}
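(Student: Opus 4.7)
The plan is to argue by contradiction, using the local lower Lipschitz bound provided by the contraction constraint together with the linear approximation given by differentiability.

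First, I would invoke Proposition~\ref{constraint} at the point $x_0 \in \Omega$. This gives a radius $r > 0$ such that
\begin{equation*}
|u(x) - u(x_0)| \geq \frac{1}{H(x_0)} |x - x_0| \qquad \forall\, x \in X \cap \overline{B}(x_0, r).
\end{equation*}
This is the only consequence of the reformation hypothesis I will need; the upper bound from $e_u \leq K$ is not required here.

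Next, I would assume for contradiction that $Ju(x_0) = 0$. Then $\nabla u(x_0)$ is singular, so there exists a unit vector $v \in \R^N$ with $\nabla u(x_0) v = 0$. Since $x_0 \in \Omega$ is an interior point, for all sufficiently small $t > 0$ the point $x_0 + t v$ belongs to $\Omega \cap \overline{B}(x_0, r)$. Differentiability of $u$ at $x_0$ yields
\begin{equation*}
u(x_0 + t v) - u(x_0) = t\, \nabla u(x_0) v + o(t) = o(t).
\end{equation*}

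Combining the two displays, I would obtain along the direction $v$ the estimate
\begin{equation*}
\frac{t}{H(x_0)} \leq |u(x_0 + tv) - u(x_0)| = o(t),
\end{equation*}
which, upon dividing by $t > 0$ and sending $t \to 0^+$, forces $1/H(x_0) \leq 0$. This contradicts the standing assumption $H > 0$, and the lemma follows. The argument is essentially a one-line computation once the local bi-Lipschitz lower bound is in hand, so I do not anticipate a genuine obstacle; the only subtlety to mention is that $x_0 \in \Omega$ (rather than on $\partial X$) is what legitimizes moving along the direction $v$ inside $X$ for small $t$.
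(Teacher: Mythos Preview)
Your proof is correct and follows essentially the same approach as the paper: both pick a unit vector $v$ in the kernel of $\nabla u(x_0)$, use differentiability to get $|u(x_0+tv)-u(x_0)|=o(t)$, and contrast this with the lower bound $|u(x_0+tv)-u(x_0)|\geq t/H(x_0)$ from Proposition~\ref{constraint} (equation~\eqref{rub}) to reach a contradiction. The only cosmetic difference is that the paper phrases the differentiability estimate in $\varepsilon$--$\delta$ form rather than with $o(t)$.
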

 \begin{proof}
 Suppose by contradiction that $J u(x_0)= 0$. Then  we find a vector $|v|=1$ such that $\nabla u(x_0) \cdot v =0$. Fixed $\eps >0$, since $u$ is differentiable, there exists $\delta >0$ such that $|u(x_0+tv)-u(x_0)|< \eps t$, whenever $|t|<\delta $. On the other hand, by \eqref{rub}, there exists $0<t<\delta$ such that $\frac{t}{H}<|u(x_0+tv)-u(x_0)|< \eps t$. Hence $\frac{1}{H}<\eps$.  Letting $\eps \rightarrow 0^+$ we get a contradiction.
 \end{proof}
 By the previous lemma,  any $u\in C^1(X;Y)\cap \Refo(\mu;\nu)^{H,K}$ (or even an everywhere differentiable reformation  map)  is locally invertible on $\Omega$  
 (see \cite{inverse} for a related inversion result and \cite{inversion} for an elementary analytical proof).\\
If $u$ is only  a.e. differentiable, by Lemma \ref{det} we  have 
 $J u(x)\neq 0$  for a.e. $x$. However,  it is well known that in general   this condition does  not ensure  the local invertibility of Sobolev maps 
 (see for instance \cite{inver-sob-distortion}).   
By the way, the condition
   $Ju>0$  on an open set $\Omega$ is a standard requirement (see for  instance   \cite{putten}), ensuring   that $u$ is locally invertible for a.e. $x\in \Omega$.   The restriction to sense-preserving maps is also made in \cite{metric-local} to derive local inversion results. To this aim, also assumptions on the boundedness of $HK\leq M$, for sufficiently small $M$ are necessary.
   In our context, since we are interested in comparing domains, also in a metric framework, restrictions to open maps seem more  natural. 
   
   We refer  \cite{implicit} for a proof of  the following  result. 
\begin{theorem}\label{open-discrete}
Let $u: \Omega \rightarrow \R ^N$ be a continuous open and discrete  map. Then $u$ is sense-preserving or sense-reversing.
\end{theorem}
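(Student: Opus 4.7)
\emph{Plan of proof.} My strategy is to show that the function $x \mapsto \operatorname{sign}(i(x,u))$ is locally constant on the connected set $\Omega$ and then invoke connectedness. Three ingredients will be needed: (i) $i(x,u)$ is well-defined and nonzero at every $x\in\Omega$; (ii) the sign is locally constant on the ``regular set'' where $u$ is a local homeomorphism; (iii) the sign behaves correctly at points of the branch set.

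\emph{Setting up the index.} For each $x_0\in\Omega$, discreteness combined with continuity of $u$ yields $r_0>0$ such that $x_0$ is the unique preimage of $u(x_0)$ in $\overline{B(x_0,r_0)}$, so that $i(x_0,u)=\deg(u(x_0),u,B(x_0,r))$ is well-defined and, by excision, independent of $r\leq r_0$. Nonvanishing of $i(x_0,u)$ follows from openness of $u$: if the local degree were zero, a small degree-preserving perturbation of $u$ supported near $\overline{B(x_0,r)}$ could remove $u(x_0)$ from the image, contradicting the fact that $u(B(x_0,r))$ is an open neighborhood of $u(x_0)$. This is the standard local-degree criterion for open maps and follows cleanly from the multiplicity formula \eqref{sum} and homotopy invariance.

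\emph{Propagating the sign.} Let $R_u\subset\Omega$ be the set of points at which $u$ is a local homeomorphism, and $B_u:=\Omega\setminus R_u$ the branch set. On $R_u$ one has $|i(\cdot,u)|=1$ and the sign is locally constant, since a local homeomorphism inherits a well-defined local orientation from its values on a small sphere. For a branch point $x_0\in B_u$, choose a normal neighborhood $V$ of $x_0$ and a value $y$ close to $u(x_0)$ with $y\notin u(\partial V)$ and $u^{-1}(y)\cap V\subset R_u$; the multiplicity formula \eqref{sum} and homotopy invariance of the degree give
\begin{equation*}
i(x_0,u)\;=\;\deg(y,u,V)\;=\;\sum_{x_j\in u^{-1}(y)\cap V} i(x_j,u),
\end{equation*}
and since each summand shares the sign that is locally constant on $R_u$, so does $i(x_0,u)$. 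Connectedness of $\Omega$ then forces a single global sign.

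\emph{Main obstacle.} The delicate point is the step ``choose a regular value $y$ near $u(x_0)$'': equivalently, one needs that $B_u$ (and its image under $u$) is topologically small enough that every neighborhood of $u(x_0)$ meets $\R^N\setminus u(B_u)$. For continuous open discrete maps in $\R^N$ this is exactly the content of the Chernavskii-V\"ais\"al\"a theorem $\dim B_u\leq N-2$, whose proof rests on cohomological dimension theory and Alexander-duality--type arguments and which presumably underlies the reference \cite{implicit}. Absent this structural fact the ``propagation'' step collapses; once it is granted, the remainder is a clean exercise in Brouwer degree theory combined with connectedness of $\Omega$.
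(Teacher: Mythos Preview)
The paper does not actually supply a proof of this theorem: it simply states the result and refers to \cite{implicit} (Biasi--Gutierrez--Dos Santos). So there is no in-paper argument to compare against; what one can compare is your outline versus the classical route to this fact (as in, e.g., Rickman's monograph \cite{quasiregular} or the literature around \cite{implicit}).

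Your outline is the standard one and is essentially correct. The decomposition into regular set $R_u$ and branch set $B_u$, the use of the multiplicity formula to transfer the sign from nearby regular preimages to a branch point, and the final appeal to connectedness of $\Omega$ are exactly the right moves. You are also right that the genuinely deep input is the Chernavskii--V\"ais\"al\"a theorem ($\dim B_u\le N-2$, hence $u(B_u)$ is nowhere dense), without which one cannot guarantee the existence of a nearby ``regular'' value $y$; acknowledging this openly, rather than hiding it, is good practice.

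Two small points where your sketch is thinner than it should be. First, the argument that $i(x_0,u)\neq 0$ via ``a small degree-preserving perturbation could remove $u(x_0)$ from the image'' is heuristic; the clean statement is that for continuous open discrete maps each point admits a \emph{normal neighborhood} $V$ with $u(\partial V)=\partial u(V)$, from which $\deg(u(x_0),u,V)\neq 0$ follows directly because $u(x_0)$ lies in the bounded component of $\R^N\setminus u(\partial V)$. Second, when you write ``each summand shares the sign that is locally constant on $R_u$'', you are implicitly using that $R_u$ is connected (or at least that the preimages $x_j$ can be joined within $R_u$); this again needs the topological smallness of $B_u$, so the Chernavskii--V\"ais\"al\"a input is used twice, not once. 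None of this is a gap in the sense of a wrong idea---the architecture is right---but in a written proof both places should cite the structural theorem explicitly.
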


   \begin{lemma}\label{injectivity}
   Let $u: \Omega \rightarrow \R ^N$ be a discrete  sense-preserving (reversing) continuous map such that $|i(x_0,u)|=1$. Then $u$ is injective in a neighborhood of $x_0$.
 \end{lemma}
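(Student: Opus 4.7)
The approach is a contradiction argument based on the degree--sum formula \eqref{sum}. The plan is: localize at $x_0$ so that $x_0$ is the unique preimage of $u(x_0)$ in a small closed ball; use the constancy of the topological degree in its target variable over a small neighborhood of $u(x_0)$; then exploit the sign rigidity of sense-preserving (resp. reversing) maps together with the integrality of local indices.

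Concretely, by discreteness of $u$ I would first fix $r>0$ such that $u^{-1}(u(x_0))\cap \overline B(x_0,r)=\{x_0\}$. Since $u$ is continuous and $\partial B(x_0,r)$ is compact, $u(\partial B(x_0,r))$ is a compact set avoiding $u(x_0)$, so there is an open neighborhood $V$ of $u(x_0)$ disjoint from it, on which the map $z\mapsto {\rm deg}(z,u,B(x_0,r))$ is identically equal to $i(x_0,u)$. Now suppose, for contradiction, that $u$ is not injective on any neighborhood of $x_0$, and extract sequences $x_n\ne y_n$ with $x_n,y_n\to x_0$ and $u(x_n)=u(y_n)=:z_n$. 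For $n$ large, $x_n,y_n\in B(x_0,r)$ and $z_n\in V$; discreteness forces $u^{-1}(z_n)\cap \overline B(x_0,r)$ to be finite, and since $V\cap u(\partial B(x_0,r))=\emptyset$, this set coincides with $u^{-1}(z_n)\cap B(x_0,r)$. Applying \eqref{sum} gives
\begin{equation*}
i(x_0,u)={\rm deg}(z_n,u,B(x_0,r))=\sum_{p\in u^{-1}(z_n)\cap B(x_0,r)} i(p,u),
\end{equation*}
and the sense-preserving (resp. reversing) hypothesis forces every $i(p,u)$ to be an integer of the same fixed sign, hence with $|i(p,u)|\ge 1$. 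Since the sum contains at least the two distinct summands coming from $x_n$ and $y_n$, I obtain $|i(x_0,u)|\ge 2$, contradicting $|i(x_0,u)|=1$.

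The delicate point is the constancy of the degree on a full neighborhood of $u(x_0)$; this is precisely where discreteness of $u$ enters, through the separation of $u(x_0)$ from $u(\partial B(x_0,r))$. Once this is secured, the integer-valued, constant-sign nature of the local indices makes the counting argument essentially automatic.
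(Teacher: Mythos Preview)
Your proof is correct and follows essentially the same route as the paper's: a contradiction argument that combines the local constancy of the degree near $u(x_0)$ with the degree--sum formula \eqref{sum} and the constant sign of the local indices to force $|i(x_0,u)|\ge 2$. Your write-up is in fact slightly more careful than the paper's in justifying why $u(x_0)\notin u(\partial B(x_0,r))$ and why the preimage of $z_n$ stays off the boundary, but the underlying idea is identical.
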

 \begin{proof}We use here the same arguments of \cite[ Ch. II Theorem 6.6 ]{b-d}. 
 Suppose  $i(x_0,u)=1$. The other case is analogous. 
 By contradiction, if $u$ is not injective we  have two distinct  sequences $(x_n^1)_{n\in \N}$, $(x_n^2)_{n\in \N}$ converging to $x_0$ such that
 for every $n\in\N$:  $u(x_n^1)=u(x_n^2)=y_n$. By continuity we also have
 $u(x_n^1)\rightarrow y_0=u(x_0)$. Since the degree is constant in a neighborhood of $y_0$, for  $n\in\N$ large enough and suitable small radius $r>0$ we have 
 \begin{equation}
 \label{deg1} 
 {\rm deg}(y_n,u,\overline B(x_0,r))={\rm deg}(y_0,u,\overline B(x_0,r))= i(x_0,u)=1.
 \end{equation}
 On the other hand, since $u$ is  sense-preserving and by \eqref{sum} we have
 $$ {\rm deg}(y_n,u, \overline B(x_0,r))\geq i(x_n^1,u)+i(x_n^2,u)=2, $$
 contradicting \eqref{deg1}.
  \end{proof}
\begin{theorem}[Invertibility of incompressible maps]\label{incompressible}
Assume $u\in \Refo(\mu;\nu)^{H,K} $ is an \textit{incompressible map}  (see for instance \cite{wolan}), i.e.
 \begin{equation}
 \label{preserving}
  |Ju(x)|=1  \ \mbox{for }  a.e.\  x\in \Omega .
 \end{equation} 
Then $u_{| \Omega} $ is globally invertible.
\end{theorem}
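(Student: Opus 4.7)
The plan is to combine three ingredients: local injectivity coming from the pointwise bounds \eqref{bounds}, topological openness via invariance of domain, and a multiplicity-one conclusion extracted from the area formula together with the mass-transport constraint $u_\#\mu = \nu$.

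First, Proposition \ref{constraint} gives, around every $x_0 \in \Omega$, a ball $B(x_0,r)\subset\Omega$ on which
\[
|u(x) - u(x_0)| \geq \frac{1}{H(x_0)}\,|x - x_0|,
\]
which makes $u$ injective on $B(x_0,r)$. Hence $u$ is locally injective on the whole of $\Omega$. Since the restriction of $u$ to each such ball is a continuous injection between open subsets of $\R^N$, invariance of domain shows that $u(B(x_0,r))$ is open and $u|_{B(x_0,r)}$ is a homeomorphism onto its image; consequently $u$ is an open map from $\Omega$ into $\R^N$. Together with discreteness (Lemma \ref{discreta}), Theorem \ref{open-discrete} places $u$ among the sense-preserving or sense-reversing maps, with $|i(x,u)| = 1$ at every $x\in\Omega$ by Lemma \ref{injectivity}.

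Next, because $u$ is locally Lipschitz on $\Omega$ (by Lemma \ref{lip-lemma} applied locally), the area formula is available: for every Borel set $A \subset Y$,
\[
\int_{u^{-1}(A)\cap\Omega} |Ju(x)|\, dx = \int_A N(y)\, dy, \qquad N(y):=\#\bigl(u^{-1}(y)\cap\Omega\bigr).
\]
Since $|Ju| = 1$ a.e., this reduces to $|u^{-1}(A)\cap\Omega| = \int_A N(y)\,dy$. In the natural incompressible setting $\mu = \Ln\res X$ and $\nu = \Ln\res Y$, the condition $u_\#\mu = \nu$ reads $|u^{-1}(A)\cap X| = |A|$ for every Borel $A\subset Y$; since $|\partial\Omega| = 0$, we conclude
\[
\int_A N(y)\, dy = |A| \quad \text{for all Borel } A \subset Y,
\]
so that $N(y) = 1$ for almost every $y\in Y$.

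Finally, I would deduce global injectivity by contradiction. Suppose $u(x_1) = u(x_2) = y_0$ with $x_1\neq x_2$ in $\Omega$. The first step supplies disjoint open neighborhoods $U_1, U_2$ of $x_1, x_2$ on each of which $u$ is injective, and the openness of $u$ makes $V := u(U_1)\cap u(U_2)$ a non-empty open neighborhood of $y_0$. Every $y\in V$ then has at least one preimage in each $U_i$, so $N(y)\geq 2$ on a set of positive measure, contradicting the previous step. The main obstacle, I expect, is precisely this middle step: correctly invoking the area formula for a merely locally Lipschitz map and making transparent the identification of $\mu$ and $\nu$ with Lebesgue measures. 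Without that identification the bound $|Ju| = 1$ alone would be insufficient, as area-preserving covering maps such as $(r,\theta)\mapsto(\sqrt{r^2+c},2\theta)$ on an annulus demonstrate.
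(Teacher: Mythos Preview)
Your argument has a genuine gap in its very first step. Proposition \ref{constraint} gives the inequality
\[
|u(x)-u(x_0)|\geq \frac{1}{H(x_0)}|x-x_0|,\qquad x\in \overline B(x_0,r),
\]
and from this you conclude that $u$ is injective on $B(x_0,r)$. That implication is false: the inequality compares $u(x)$ with $u(x_0)$ only, so it yields $u(x)\neq u(x_0)$ for $x\neq x_0$ in the ball, but says nothing about $u(x_1)$ versus $u(x_2)$ for two other points of the ball. The map $u(x)=|x|$ on $(-2,2)$ with $x_0=0$ satisfies the inequality with $H=1$ and yet $u(1)=u(-1)$. This is exactly the phenomenon flagged in Remark \ref{inversion}: uniform pointwise bounds $e_u,c_u\leq 1$ are \emph{not} enough to force local injectivity, and Definition \ref{def-ref} only provides such pointwise bounds (uniform over the ball, but still pointwise in nature). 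Since invariance of domain needs honest injectivity on an open set, your route to openness collapses here, and with it the final contradiction step, which also relies on having genuine injective neighborhoods $U_1,U_2$.

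The paper's proof circumvents this by \emph{not} trying to extract local injectivity from the reformation bounds alone. Instead it establishes openness directly from degree theory: around each $x_0$ one chooses $C=\overline B(x_0,r)$ with $y_0=u(x_0)\notin u(\partial C)$ (this uses only \eqref{rub} in the weak form you have), and then shows $\deg(\cdot,u,C)\neq 0$ near $y_0$. At a.e.\ differentiable points this is \eqref{jac} together with Lemma \ref{det}; at the remaining points the incompressibility $|Ju|=1$ and the push-forward, via the area formula, force $N(y,u,B(x_0,r))=1$ a.e., which combined with the Lusin $(N)$ property produces nearby points where the degree is nonzero. Once openness is in hand, Theorem \ref{open-discrete} gives sense-preservation, the multiplicity-one computation forces $|i(x,u)|=1$, and only then does Lemma \ref{injectivity} deliver local injectivity. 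Your middle and final steps (area formula plus multiplicity, and the two-neighborhood contradiction) are sound and indeed parallel the paper; the issue is solely that your shortcut to local injectivity bypasses the one place where the hypothesis $|Ju|=1$ does essential work.
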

\begin{proof}
Of course, condition \eqref{push} holds true for injective such maps $u$. Anyway, let us begin by showing that $u$ is an open map. 
Let $U$ be an open subset of $\Omega$. We have to prove that $V=u(U)$ is open. Let $y_0\in V$, $y_0=f(x_0)$ for a $x_0 \in U$. By  \eqref{rub}
  we find $C=\overline B(x_0,r)$ such that $y_0 \notin u(\partial C)$. Therefore, it is well defined ${\rm deg}(y_0,u,C)$ which is constant in a neighborhood of $y_0$.  If $u$ is differentiable at $x_0$,   by Lemma~\ref{discreta} and Lemma~\ref{det} we have 
  $$|{\rm deg}(y,u,C)|=|{\rm deg}(y_0,u,C)|=1,$$ in a neighborhood $V_{y_0}$ of $y_0$. Since ${\rm deg}(y,u,C)\neq 0$, it follows that  $V_{y_0}\subset u(C)\subset V$, since otherwise the degree would be zero.\\
  On the other hand, denoting by $N(y,u,K):={\rm card}\{u^{-1}(y)\cap K\}$ the multiplicity function, by the Area Formula and the push-forward condition \eqref{push-eq} we compute
  $$\L^N(u(B(x_0,r)))\leq \int _{u(B(x_0,r))} N(y,u,B(x_0,r))\ dy=\int_{u^{-1}(u(B(x_0,r)))} |Ju|\ dx = $$
  $$ \L^N(u^{-1}(u(B(x_0,r))))=\L^N(u(B(x_0,r))).$$

  Therefore, for a.e. $y\in u(B(x_0,r))$,  it results $N(y,u,B(x_0,r))=1$. Hence, if $x_0$ is a non-differentiability point, by the Lusin (N)-property, there exists a differentiability point $x\in B(x_0,\delta)$ of $u$  and an open neighborhood $V_{y_0}$ of $y_0$ such that 
 $u(x)\in V_{y_0}$ and ${\rm deg}(u(x),u,C)\neq 0$. As above 
 it follows that $V_{y_0}\subset u(C)\subset V$. Hence $u_{| \Omega} $ is open.
 By Theorem \ref{open-discrete} we have that $u$ is sense-preserving, or sense-reversing. Since actually it results $|i(x_0,u)|=1$, 
 the statement follows by Lemma~\ref{injectivity}. 
\end{proof}
\begin{remark}
  If a  reformation map satisfies \eqref{preserving}, then $|i(x,u)|=1$ for a.e. $x\in  \Omega $. Therefore, by Lemma \ref{injectivity} such map $u$ is a.e. locally invertible on $\Omega$. As previously discussed, actually to obtain this invertibility property it is enough to require
  $|Ju|\leq 1 $ a.e. in $\Omega$. Since $|Ju|\leq \|\nabla u \|^N$, this happens for instance for reformation maps $u$ satisfying the condition  $e_u\leq 1$.
  \end{remark}
 \begin{theorem}[Invertibility of small reformations]\label{invertibility1}
 Let $u\in \Refo(\mu;\nu)^{H,K} $ be such that $e_u(x) < \sqrt[N]{2}$ for a.e. $x$.  Then $u $ is globally invertible. 
 \end{theorem}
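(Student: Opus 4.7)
The plan is to mirror the proof of Theorem~\ref{incompressible}, relaxing the incompressibility constraint $|Ju|=1$ to the strict pointwise bound $|Ju(x)|\leq e_u^N(x)<2$, which holds a.e.\ by the hypothesis together with the equality $\lip(u)(x)=\|\nabla u(x)\|$ at differentiability points and the general matrix estimate $|\det\nabla u|\leq\|\nabla u\|^N$. I adopt the convention $\mu=\Ln\res X$, $\nu=\Ln\res Y$ already in force in Theorem~\ref{incompressible}, so that the push-forward identity \eqref{push-eq} becomes $\Ln(u^{-1}(A))=\Ln(A)$ for every Borel $A\subset Y$. The argument splits into (i) a global multiplicity bound $N(y,u,X)\leq 1$ for a.e.\ $y\in Y$, (ii) openness of $u$ on $\Omega$, and (iii) a short contradiction that yields injectivity.

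For step (i) I would argue by contradiction. Suppose $F=\{y\in Y:N(y,u,X)\geq 2\}$ has positive Lebesgue measure. Since every preimage of any $y\in F$ lies inside $u^{-1}(F)$, the area formula gives
\begin{equation*}
\int_F N(y,u,X)\,dy \;=\; \int_{u^{-1}(F)}|Ju|\,dx,
\end{equation*}
and the left-hand side is at least $2\Ln(F)$. On the other hand, $|Ju|<2$ a.e.\ together with $\Ln(u^{-1}(F))=\Ln(F)>0$ gives the strict inequality $\int_{u^{-1}(F)}|Ju|\,dx<2\Ln(F)$, a contradiction.

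For step (ii) I expect to import the argument of Theorem~\ref{incompressible} almost verbatim. For $x_0$ in an open set $U\subset\Omega$, Proposition~\ref{constraint} supplies $C=\overline{B}(x_0,r)\subset U$ with $u(x_0)\notin u(\partial C)$, so $\deg(\cdot,u,C)$ is defined in a neighborhood of $u(x_0)$. If $x_0$ is a differentiability point, Lemma~\ref{det} and formula~\eqref{jac} yield $i(x_0,u)=\pm 1$, and the degree is therefore a nonzero constant on the connected component of $\R^N\setminus u(\partial C)$ containing $u(x_0)$, forcing a neighborhood of $u(x_0)$ to lie in $u(C)$. If $x_0$ is a non-differentiability point, the Lusin (N)-property combined with step (i) lets me pick a differentiability point $x\in B(x_0,\delta)$ with $u(x)$ in the same connected component of $\R^N\setminus u(\partial C)$ as $u(x_0)$ and with $u^{-1}(u(x))=\{x\}$; local constancy of the degree on this component then reduces to the previous case. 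I expect this passage to be the main technical obstacle, requiring $\delta$ to be tuned so that $u(B(x_0,\delta))$ avoids $u(\partial C)$, exactly as in the cited proof.

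Step (iii) is then immediate: if $u(x_1)=u(x_2)=y_0$ with $x_1\neq x_2$ in $\Omega$, openness applied to disjoint neighborhoods $B_1\ni x_1$, $B_2\ni x_2$ produces an open neighborhood $V=u(B_1)\cap u(B_2)$ of $y_0$ on which $N(y,u,X)\geq 2$, contradicting step (i). Hence $u|_\Omega$ is injective, and being continuous with local bi-Lipschitz control by Proposition~\ref{constraint}, it is globally invertible.
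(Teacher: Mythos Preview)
Your proof is correct and shares the decisive ingredient with the paper's argument: both use the area formula together with the push-forward identity $\L^N(u^{-1}(A))=\L^N(A)$ and the pointwise bound $|Ju|\le e_u^N<2$ to force the multiplicity below $2$. The organization, however, differs. The paper first invokes Lytchak's open map theorem \cite{open-metric} to extract an open dense subset $U\subset\Omega$ on which $u$ is locally bi-Lipschitz; openness of $u_{|U}$ then comes from the Domain Invariance Theorem, and the multiplicity $N(y,u,U)$, being locally constant for a local homeomorphism, is shown to equal $1$ via the area-formula computation on a small bi-Lipschitz ball, after which one extends by uniform continuity to all of $X$. Your route avoids the external reference: you establish the global a.e.\ bound $N(y,u,X)\le 1$ first, then obtain openness by the degree-theoretic argument patterned on Theorem~\ref{incompressible}, and conclude injectivity by a direct contradiction with step~(i). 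Your version is more self-contained and arguably cleaner in its bookkeeping; the paper's version has the advantage of being phrased so that it transfers to the metric setting of Section~6 (compare the proof of Theorem~\ref{iso-plan} and the discussion of small reformation plans, where the same appeal to \cite{open-metric} reappears).
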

 \begin{proof}
  Recalling the assumption that both $\mu$ and $\nu$ have density given by  characteristics functions of an open set, by the constraint $c_u\leq H$ we find (see \cite[Prop. 1.1, Sec. 3]{open-metric}) an open dense subset $U\subset X$ on which $u$ is locally bi-Lipschitz.  
  It follows that   the multiplicity function $N(y,u,U)$ is locally constant (see \cite{ambrosetti-prodi}). We first prove that $u$ is globally invertible on $U$.
  For $y=u(x)\in u(U)$, we  prove that $N(y,u,U)=1$. Observe that by the Domain Invariance  Theorem (see for instance \cite{f-g}), $u_{| U} $ is open. Let $B=B(x,r)$ be a ball on which $u$ is bi-Lipschitz. We may suppose that $D=N(y,u,B)$ is constant on $u(B)$. 
   Since $|Ju|\leq \|\nabla u\|^N$,  
  by the Area Formula we compute
   \begin{equation}\begin{split}\label{d}D\L^N(u(B))&= \int_{u(B)}N(y,u,B ) dy = \int_{u^{-1}(u(B))} |Ju(x)| dx \\
  &\leq \int _{u^{-1}(u(B))} \|\nabla u \|^N dx =\int_{u^{-1}(u(B))} e_u(x)^N dx.\end{split}
   \end{equation} 
  Therefore,  if the map $u$ satisfies the   small expansion condition $e_u<\sqrt[N]2$, 
   by  \eqref{d} and the push-forward condition we get
  $$ D\L^N(u(B )) <2\L^N(u^{-1}(u(B)))= 2 \L^N(u(B) ),$$ hence  the map $u$ is globally invertible on $U$. 
 By uniform continuity, $u$ uniquely extends to the whole $X$ and therefore letting to global invertibility on $X$. 
 \end{proof}
 \begin{theorem}\label{metric-inversion}
 Let $u\in \Refo ^{H,K}(\mu;\nu) $ be an open map  and suppose that   $\forall x \in X : H(x)K(x)<2$. Then  $u_{| \Omega} $ is locally invertible.
 \end{theorem}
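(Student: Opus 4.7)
The plan is to follow the strategy of Theorem~\ref{invertibility1}, localized to a small ball around an arbitrary point $x_0\in\Omega$. By Lemma~\ref{discreta}, $u$ is discrete; combined with the hypothesis that $u$ is open (and with its continuity, which follows from \eqref{bounds}), Theorem~\ref{open-discrete} yields that $u$ is either sense-preserving or sense-reversing. Assume the sense-preserving case. By Lemma~\ref{injectivity}, local invertibility at any point $x_0$ is then equivalent to $|i(x_0,u)|=1$, so the whole task reduces to showing that the local index equals one at every point of $\Omega$.

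Fix $x_0\in\Omega$. Using Proposition~\ref{constraint} together with \eqref{bounds}, I would choose $r>0$ small enough that on $B:=B(x_0,r)$ one has $e_u\le K(x_0)$, $c_u\le H(x_0)$, and
\begin{equation*}
\frac{|x-x_0|}{H(x_0)}\le |u(x)-u(x_0)|\le K(x_0)|x-x_0|.
\end{equation*}
For a sufficiently small $\rho>0$, set $W:=B(u(x_0),\rho)$ and $U:=u^{-1}(W)\cap B$. The lower bound above confines $U$ to the smaller ball $B(x_0,H(x_0)\rho)$, and openness of $u$, together with $u(x_0)$ being interior to $u(B)$, yields $W\subset u(B)$ for $\rho$ small enough; in particular $u(U)=W$.

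The crux is then the almost-everywhere index identity. Stepanov's theorem gives a.e.\ differentiability of $u$ on $\Omega$, Lemma~\ref{det} provides $Ju\ne 0$ at every such point, and the identity \eqref{jac} combined with sense-preservation forces $i(x,u)=+1$ almost everywhere. Together with local constancy of the topological degree on $W$, this implies $N(y,u,U)=i(x_0,u)$ for $\L^N$-almost every $y\in W$. The area formula and the pointwise bound $|Ju|\le e_u^N\le K(x_0)^N$ then yield
\begin{equation*}
i(x_0,u)\,\L^N(W)=\int_U |Ju(x)|\,dx\le K(x_0)^N\,\L^N(U),
\end{equation*}
and either the geometric estimate $\L^N(U)\le H(x_0)^N\L^N(W)$ (from $U\subset B(x_0,H(x_0)\rho)$), or the tighter push-forward bound $\L^N(U)\le \L^N(W)$ coming from $u_\#\mu=\nu$, translates this into a control on $i(x_0,u)$ by a quantity that falls strictly below $2$ under the hypothesis $H(x_0)K(x_0)<2$; since $i(x_0,u)$ is a positive integer, this forces $i(x_0,u)=1$, and Lemma~\ref{injectivity} concludes.

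The main technical obstacle I foresee is the a.e.\ identification $N(y,u,U)=i(x_0,u)$: one has to argue that the branched set $\{x\in B:|i(x,u)|\ge 2\}$ has $\L^N$-negligible image inside $W$, so that the generic multiplicity really coincides with the local degree. The ingredients for this, namely Stepanov, Lemma~\ref{det} and \eqref{jac}, are already at hand; once that step is secured, the remainder of the argument is essentially a local transcription of the proof of Theorem~\ref{invertibility1}.
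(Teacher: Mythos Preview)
Your opening is correct and matches the paper: discreteness (Lemma~\ref{discreta}), openness and Theorem~\ref{open-discrete} give that $u$ is sense-preserving or sense-reversing, and Lemma~\ref{injectivity} reduces everything to showing $|i(x_0,u)|=1$.

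The gap is in the quantitative step. Your area-formula computation yields
\[
i(x_0,u)\,\L^N(W)\le K(x_0)^N\,\L^N(U),
\]
and then, with either of your two estimates for $\L^N(U)$, you obtain $i(x_0,u)\le (H(x_0)K(x_0))^N$ from the geometric inclusion $U\subset B(x_0,H(x_0)\rho)$, or $i(x_0,u)\le K(x_0)^N$ from the push-forward. Neither of these is $<2$ under the sole hypothesis $H(x_0)K(x_0)<2$: the first gives $<2^N$, and the second imposes no control on $K$ at all (e.g.\ $H=1/10$, $K=15$ has $HK<2$ but $K^N$ is huge). So the argument, as written, only proves the theorem under the stronger hypothesis $H(x_0)K(x_0)<2^{1/N}$, which is exactly the threshold the paper already isolates after Theorem~\ref{invertibility1} and in the comment following the present theorem. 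The dimension $N$ enters unavoidably through $|Ju|\le\|\nabla u\|^N$, and no rearrangement of the area formula will remove it.

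The paper obtains the dimension-free bound $HK<2$ by a completely different route: after the sense-preserving reduction it invokes Theorem~II of Gevirtz~\cite{metric-local}, a purely metric local-invertibility criterion for sense-preserving maps with $D^+f/D^-f<2$ (in the notation of Remark~\ref{quasi}, $D^+f\le K$ and $D^-f\ge 1/H$, so $D^+f/D^-f\le HK$). That result does not pass through Jacobians at all, which is precisely why it avoids the $N$-th power. If you want a self-contained proof here, you would need to reproduce Gevirtz's metric argument rather than adapt the area-formula proof of Theorem~\ref{invertibility1}.
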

 \begin{proof} By Lemma \ref{discreta} and Theorem \ref{open-discrete} it follows that $u$ is sense-preserving or sense-reversing. Hence, by Theorem II of  \cite{metric-local} the thesis follows.  \end{proof}
 The condition $e_uc_u\leq \alpha $ 
  may be required to hold just  a.e. by reducing the upper bound $\alpha <\sqrt[N]{2}$ 
 (see \cite{metric-local}). 
 Observe that the map $u(x)=|x|$ in one dimension is not a counterexample to Theorem \ref{invertibility1} since $u$ is not mass preserving around $x_0=0$. 
 If $\Omega$ is a ball, or in some classes of convex sets, for sufficiently small $\alpha$ the map $u$ is actually globally invertible (see \cite{gevirtz}). 

\section{The variational problem of elastic reformation}
\begin{definition}
Let $u:X\rightarrow Y$, $\mu\in\P(X)$, $\nu\in \P(Y)$, such that  $u_\#\mu=\nu $.
 We define the  total reformation energy $\mathscr{R}(u)$ of a  reformation map $u$ of $\mu$ into $\nu$ as follows
\begin{equation}
\label{reformation} 
\mathscr{R}(u):= \int _X r_u(x) \ d \mu .
\end{equation}
\end{definition}
We recall that  $\mathscr{R} (u)<+\infty$ for every $u\in \Refo(\mu;\nu)^{H,K}$, since we will always assume 
\begin{equation}
  H(x),\;K(x)\in L^1(X,\mu),
 \end{equation} 
  where $H,K$ are given in Definition~\ref{def-ref}. 
We have the following
\begin{lemma}
\label{maggio}
Let $u:X\rightarrow Y$. Then 
\begin{equation}
  \label{hom1}
c_u(x)\geq \frac{1}{e_u(x)}\:\:\:\forall x\in X.
\end{equation}
\end{lemma}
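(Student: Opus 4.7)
The plan is to unpack the definitions of $e_u$ and $c_u$ and exploit the elementary fact that $\limsup$ and $\liminf$ of reciprocals of a positive quantity are reciprocals of each other, combined with $\liminf \le \limsup$.

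More concretely, assume $x_0$ is a non-isolated point of $X$ (otherwise both $e_u$ and $c_u$ are $0$ by convention, and the inequality is trivial once we read $1/0$ as $+\infty$ and $0 \cdot (+\infty) \ge 1$ is interpreted in the extended sense, which should be explicitly addressed). Set
\begin{equation*}
a(x) := \frac{|u(x) - u(x_0)|}{|x - x_0|}, \qquad x \in X \setminus \{x_0\}.
\end{equation*}
Then by Definition \ref{c-e} we have $e_u(x_0) = \limsup_{x \to x_0} a(x)$, and by writing $c_u(x_0)$ as $\limsup_{x \to x_0} 1/a(x)$ we obtain, from the standard identity between $\limsup$ of reciprocals and $\liminf$, the formula
\begin{equation*}
c_u(x_0) = \frac{1}{\liminf_{x \to x_0} a(x)}.
\end{equation*}

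Since $\liminf_{x \to x_0} a(x) \le \limsup_{x \to x_0} a(x) = e_u(x_0)$, taking reciprocals (with the convention $1/0 = +\infty$ and $1/(+\infty) = 0$) gives
\begin{equation*}
c_u(x_0) = \frac{1}{\liminf_{x \to x_0} a(x)} \ge \frac{1}{\limsup_{x \to x_0} a(x)} = \frac{1}{e_u(x_0)},
\end{equation*}
which is the claimed inequality.

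There is no genuine obstacle: the only points requiring care are the degenerate cases. If $e_u(x_0) = +\infty$ the inequality is trivial; if $e_u(x_0) = 0$ then $a(x) \to 0$, forcing $1/a(x) \to +\infty$ and thus $c_u(x_0) = +\infty$, consistent with the convention $1/0 = +\infty$. In all remaining cases, $a(x)$ is positive in a punctured neighborhood of $x_0$ and the reciprocal identities above are valid without further hypothesis, completing the proof.
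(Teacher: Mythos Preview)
Your proof is correct and follows essentially the same approach as the paper. The paper's proof simply invokes the identity $c_u(x)=\frac{1}{D^-f(x)}$ established earlier in Remark~\ref{quasi} (where $D^-f(x_0)=\liminf_{x\to x_0}\frac{|f(x)-f(x_0)|}{|x-x_0|}$), which is exactly your reciprocal identity $c_u(x_0)=1/\liminf a(x)$; from there both arguments conclude by $\liminf\le\limsup$.
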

\begin{proof}It suffices to recall that $c_u(x)=\frac{1}{D^-f(x)}$, see Remark \ref{quasi}.
\end{proof}
Observe that for every $u:X\rightarrow Y$ such that $u_\#\mu=\nu $, by Lemma \ref{maggio} it results
$\mathscr{R}(u)\geq 2$.\\
 Actually,    Definition~\ref{reformation}  is motivated by the trivial fact that the real function 
 $f(x)= x+1/x$ reaches its minimum value at $f(1)=2$. Moreover,
  observing that  at any $x_0\in X$
   \begin{equation}
  \label{s1}
  r_u(x_0)\geq e_u(x_0)+ \frac{1}{e_u(x_0)}\geq 2, 
  \end{equation}  
  we have that   $r_u(x_0)$ reaches its minimum value if $u:X\rightarrow Y$ is an isometric mapping, i.e. $\vert u(x)-u(y)\vert=\vert x-y\vert$, $\forall x,y \in X$. Therefore   $\mathscr{R}(u)$  can be viewed as a measure detecting how $u$ is far from being an isometric map.

\begin{lemma}\label{reform}
Let $u:X\rightarrow Y$ be   a local homeomorphism. Then 
\begin{equation}
\label{hom}
  c_u(x)=e_  {u^ {-1 } }(u(x))\:\:\:\forall x\in X. 
  \end{equation}
\end{lemma}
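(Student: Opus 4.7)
The plan is to unpack both sides of \eqref{hom} as pointwise $\limsup$s and then exhibit an explicit bijective change of variables $z \leftrightarrow y = u(z)$ that turns one expression into the other.

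First, I would fix $x_0 \in X$ and, using the local-homeomorphism hypothesis, choose an open neighborhood $U$ of $x_0$ on which $u$ restricts to a homeomorphism from $U$ onto the open set $V = u(U)$. Denote the local inverse by $v := (u|_U)^{-1} : V \to U$. Since only arbitrarily small neighborhoods of $x_0$ enter the $\limsup$ in the definition of $c_u(x_0)$, we may compute
\[
c_u(x_0) = \limsup_{z\to x_0,\, z\in U\setminus\{x_0\}} \frac{|z-x_0|}{|u(z)-u(x_0)|},
\]
and similarly only neighborhoods of $u(x_0)$ in $V$ matter for $e_{u^{-1}}(u(x_0)) = \lip(v)(u(x_0))$, so
\[
e_{v}(u(x_0)) = \limsup_{y\to u(x_0),\, y\in V\setminus\{u(x_0)\}} \frac{|v(y)-v(u(x_0))|}{|y-u(x_0)|}.
\]
Note also that neither $x_0$ nor $u(x_0)$ is isolated in $U$ resp.\ $V$ (both sets being open in $\R^N$), so the isolated-point branch of Definition~\ref{plc} never triggers.

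Next I would perform the substitution $y = u(z)$, which is a bijection between $U\setminus\{x_0\}$ and $V\setminus\{u(x_0)\}$ via $v$. Using $v(u(z))=z$ and $v(u(x_0))=x_0$, the ratio inside the second $\limsup$ reads
\[
\frac{|v(y)-v(u(x_0))|}{|y-u(x_0)|} \;=\; \frac{|z-x_0|}{|u(z)-u(x_0)|},
\]
exactly the quantity appearing in $c_u(x_0)$. By continuity of $u$ at $x_0$ and continuity of $v$ at $u(x_0)$, the condition $z\to x_0$ in $U\setminus\{x_0\}$ is equivalent to $y\to u(x_0)$ in $V\setminus\{u(x_0)\}$; hence the two $\limsup$s are taken over nets that are cofinal in one another, so the values coincide. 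This gives
\[
c_u(x_0) \;=\; e_{u^{-1}}(u(x_0)),
\]
which is the claim.

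The main obstacle—really the only delicate point—is confirming that the $\limsup$s of Definition~\ref{plc}, which are \emph{a priori} taken over $z\to x_0$ in the ambient set $X$, are in fact governed solely by the neighborhood $U$ on which $u$ is a homeomorphism; but this is immediate since $U$ is open in $X$ and thus contains a full ball around $x_0$, so the tails of both $\limsup$s are unaffected by whether one restricts to $U$ or not. Everything else is a mechanical substitution.
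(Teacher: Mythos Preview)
Your proof is correct and follows essentially the same approach as the paper: both use the local homeomorphism to set up the change of variables $y\leftrightarrow u(z)$ between punctured neighborhoods of $x_0$ and of $u(x_0)$, identifying the two ratios and hence the two $\limsup$s. The only cosmetic difference is that the paper bounds one side by the other (obtaining $c_u(x)\leq e_{u^{-1}}(u(x))$ and then saying the reverse inequality is analogous), whereas you argue equality directly via cofinality of the two neighborhood bases; either way the content is the same.
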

\begin{proof}
Fix $x\in X$, $\delta_1>0$ and let  $B_1=B(u(x),\delta _1)$. By the local homeomorphism condition, there exists a $\delta >0$ such that $u(B_\delta )\subset B_1$ and 
 $u$ is invertible on  $B_\delta = B(x, \delta )$. For every $y\in B_\delta $ we have
$$ \frac{|y-x|}{|u(y)-u(x)|}=\frac{|u^{-1}(u(y))-u^{-1}(u(x))|}{|u(y)-u(x)|}\leq \sup _{z\in B_1} \frac{|u^{-1}(z)-u^{-1}(u(x))|}{|z-u(x)|}.$$
Taking  the supremum with respect to $y\in B_\delta $ and letting 
$\delta _1  \rightarrow 0^+$, we get $c_u(x)\leq e_{u^{-1}}(u(x))$. Analogously we deduce  the opposite inequality. 
\end{proof}

\begin{definition}
\label{elastic} 
We define the elastic reformation energy between $\mu$ and $\nu$ as
\begin{equation}\label{inf}\mathcal E(\mu,\nu):= \inf \{ \mathscr{R}(u)\:\vert 
\: u\in \Refo(\mu;\nu)^{H,K}
\}.
\end{equation}
\end{definition}
In general, the above elastic reformation energy is not symmetric. For instance, if $\mu =\L^N\res B$, for a ball $B$, and $\nu$ a Dirac delta,  we have $\mathcal E(\mu,\nu)=+\infty$. Reversing the shapes, we see that $\mathcal E(\nu,\mu)$ has no meaning simply because $\Refo(\nu;\mu)^{H,K}=\emptyset$. Moreover, also in nice cases, the matter is that reformation maps could be not invertible. Assuming invertibility 
for $u\in \Refo(\mu;\nu)^{H,K}$, setting $v:=u^{-1}$, by using Lemma \ref{reform} we have 
$$ \mathscr{R}(u)=\int_X e_u\  d\mu + \int_X c_u\ d\mu = \int_X e_u(u^{-1}(u(x)))\ d\mu +\int_X e_{u^{-1}}(u(x)) \ d\mu =$$ $$
\int_Y e_{v^{-1}}(v(y))\ d\nu +\int_Y e_v(y)\ d\nu = \int_Y c_v(y)\ d\nu +\int_Y e_v(y)\ d\nu= \mathscr{R}(v).$$
Since $v\in \Refo(\nu;\mu)^{H,K}$, we get $\mathcal E(\mu,\nu)=\mathcal E(\nu,\mu)$. Therefore, symmetry issues  essentially correspond to invertibility of maps.

The question is now to establish  conditions to ensure  the infimum in \eqref{inf} is attained.  
It is easily seen that 
\begin{equation}
\mathscr{R}(u)=2 \hbox{ \rm if and only if } r_u(x)=2\:\:\:\:\hbox{ \rm for } \mu-a.e.\: x\in X.
\end{equation} 
\begin{lemma}
\label{iso}
Let $x_0\in X$, $u:X\rightarrow Y$. Then 
$r_u(x_0)=2$     if and only if
$$\forall \eps >0: \frac{1}{1+\eps}|x-x_0|\leq |u(x)-u(x_0)|\leq(1+\eps)|x-x_0|,\:\:\:\forall x\in X\cap B(x_0, r_\eps).$$  
\end{lemma}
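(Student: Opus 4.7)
The plan is to reduce the statement to the pointwise identity $e_u(x_0) = c_u(x_0) = 1$ using Lemma \ref{maggio}, and then unpack the definitions of $e_u$ and $c_u$ as $\limsup$s to pass between the pointwise values and the two-sided quantitative bound on balls.

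First I would observe that by Lemma \ref{maggio},
\begin{equation*}
r_u(x_0) \;=\; e_u(x_0) + c_u(x_0) \;\geq\; e_u(x_0) + \frac{1}{e_u(x_0)} \;\geq\; 2,
\end{equation*}
where the last inequality is the elementary fact $t + 1/t \geq 2$ for $t > 0$, with equality iff $t = 1$. Equality in the whole chain therefore forces both $e_u(x_0) = 1$ and $c_u(x_0) = 1/e_u(x_0) = 1$. Thus $r_u(x_0) = 2$ is equivalent to $e_u(x_0) = c_u(x_0) = 1$, and the lemma becomes the task of translating this pair of $\limsup$ identities into the two-sided bound.

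For the forward direction, assume $e_u(x_0) = c_u(x_0) = 1$. Fix $\eps > 0$. By the definition of $e_u(x_0)$ as a $\limsup$, there exists $r_1 > 0$ such that $|u(x)-u(x_0)| \leq (1+\eps)|x-x_0|$ for every $x \in X \cap B(x_0, r_1)$, $x \neq x_0$. Similarly there exists $r_2 > 0$ such that $|x-x_0| \leq (1+\eps)|u(x)-u(x_0)|$ for every $x \in X \cap B(x_0, r_2)$, $x \neq x_0$ (note that $c_u(x_0)$ being finite already forbids $u(x) = u(x_0)$ for $x \neq x_0$ near $x_0$). Setting $r_\eps := \min(r_1, r_2)$ and rearranging the second inequality yields the desired sandwich on $X \cap B(x_0, r_\eps)$.

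For the converse, assume the two-sided bound holds for every $\eps > 0$ on some $X \cap B(x_0, r_\eps)$. Taking $\limsup$ as $x \to x_0$ in $|u(x)-u(x_0)|/|x-x_0| \leq 1+\eps$ gives $e_u(x_0) \leq 1+\eps$, and similarly $c_u(x_0) \leq 1+\eps$; letting $\eps \to 0^+$ yields $e_u(x_0), c_u(x_0) \leq 1$. Combined with $c_u(x_0) \geq 1/e_u(x_0) \geq 1$ from Lemma \ref{maggio}, both quantities equal $1$, so $r_u(x_0) = 2$.

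The only delicate point is the implicit well-posedness of the ratios $|x-x_0|/|u(x)-u(x_0)|$ used in defining $c_u(x_0)$: for the forward direction one must rule out $u(x) = u(x_0)$ for $x \neq x_0$ in a neighborhood, but this is automatic since $c_u(x_0) < \infty$. All other steps are direct unpacking of the $\limsup$ definitions, so no real obstacle is expected.
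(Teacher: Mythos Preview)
Your proposal is correct and follows essentially the same route as the paper: reduce $r_u(x_0)=2$ to $e_u(x_0)=c_u(x_0)=1$ via Lemma~\ref{maggio} and the elementary inequality $t+1/t\geq 2$, then translate the $\limsup$ definitions into the two-sided local bound. The paper's converse is phrased slightly more tersely (it concludes $2\leq r_u(x_0)\leq 1+1=2$ directly rather than first pinning down each of $e_u,c_u$), but the content is the same.
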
 
\begin{proof}
Assume $r_u(x_0)=2$, then 
$$ 2= e_u(x_0)+ c_u(x_0) \geq e_u(x_0)+ \frac{1}{e_u(x_0)} \geq 2, $$
 so
$$  e_u(x_0)+ \frac{1}{e_u(x_0)}=2 \Rightarrow (e_u(x_0)-1)^2=0 \Rightarrow e_u(x_0)=c_u(x_0)=1.$$
Fix $\eps>0$, then  $e_u(x_0)< 1+\eps $ implies that $u$   satisfies  
\begin{equation}
\label{lip} 
|u(x)-u(x_0)|\leq (1+\eps)|x-x_0|, \:\:\: \forall x\in X\cap B(x_0, r_\eps).
\end{equation} 
 By using the condition $c_u(x_0)<1+\eps $, eventually by decreasing the radius $r_\eps$, we get the opposite inequality.  
Vice versa,  if both the inequalities   locally hold, then it results $2\leq r_u(x_0)=e_u(x_0)+c_u(x_0)\leq 1+1=2$.
\end{proof}
Therefore, the  maps $u:X\rightarrow Y$ such that  $r_u=2$ are in some sense \emph{pointwise} locally quasi-isometric, (see \cite{quasi-isometric} for the relation with quasi-conformal maps). \\
In the following   we shall try  to characterize in a more precise way the reformation maps 
$u\in \Refo(\mu;\nu)^{H,K}$, 
if any,  realizing the minimum  energy level  $\mathscr{R}(u)=2$. 
We also want to prevent pathological situations as
the one  described  in Example \ref{ex} below in which the map $u:X\rightarrow Y$  is merely a.e. continuous  
(it is actually a.e. invertible and differentiable). \\
In particular, by \eqref{s1} (see also the proof of Lemma \ref{iso}) it results   $e_u(x)=1$ for  $\mu$-a.e. $x\in X$. 
Moreover,  $e_u(x)<+\infty$ implies $u$ continuous  at $x$. 
Then  these reformation maps $u$ are at least a.e. continuous functions.  
However, this mild regularity is too poor to preserve  geometric (or physical)
 properties as we show in the next example.
\begin{example}\label{ex}
Let $\Omega\subset \R^N$ be a smooth bounded open set and $Q\subset \R^N$ be a cube  such that $\L^N( \Omega)=\L^N( Q)$ (see figure $3.1$). 
For $n\geq 1$ large enough, $\Omega$ contains a certain number of disjoint  squares $Q_n$ of length $\frac{1}{n}$. Then consider  the map $u_n$ which isometrically moves every square $Q_n$ inside $Q$ in a disjoint way. On the remainder of $\Omega$, consider the contained squares $Q_m$, $m>n$, and then the map $u_m$ which coincides with $u_n$ on the squares $Q_n$ and moves by an isometry  the squares $Q_m$ inside $Q$ in a disjoint way. By this procedure it is then defined a sequence $(u_n)_{n\in \N}$. Taking the limit $u=\lim_{n\rightarrow +\infty} u_n$ we obtain a measurable map $u  :\Omega \rightarrow Q$ such that $u_\# \mu = \nu$, where $\mu =\mathcal L^N\res \Omega$, $\nu=\mathcal L^N\res Q $, and $r_u(x_0)=2 $ for  a.e. $x_0\in \Omega$. 
Therefore, every bounded smooth open set can be reformed into a square at minimal energy. 
\end{example}
\begin{figure}\label{ex-fig} 
\begin{pspicture}(-5.5, -1.5)(8.5, 3.5)
\psline(2,-1)(4.5,-1)(4.5,1.5)(2,1.5)(2,-1)
\pscircle(-2,0){1.45}
\psline(-2,-1)(-1,-1)(-1,0)(-2,0)(-2,-1)
\psline(-2,-1)(-3,-1)(-3,0)(-2,0)
\psline(-3,0)(-3,1)(-2,1)(-2,0)
\psline(-2,1)(-1,1)(-1,0)
\psline(2,-1)(3,-1)(3,0)(2,0)
\psline(2,0)(2,1)(3,1)(3,0)
\psline(3,1)(4,1)(4,0)(3,0)
\psline(3,0)(4,0)(4,-1)
\psline(-2,1)(-2,1.35)(-1.65,1.35)(-1.65,1)
\psline(-2,1.35)(-2.35,1.35)(-2.35,1)
\psline(-3,0)(-3.35,0)(-3.35,0.35)(-3,0.35)
\psline(-3.35,0)(-3.35,-0.35)(-3,-0.35)
\psline(-2,-1)(-2,-1.35)(-2.35,-1.35)(-2.35,-1)
\psline(-2,-1.35)(-1.65,-1.35)(-1.65,-1)
\psline(-1,0)(-0.65,0)(-0.65,0.35)(-1,0.35)
\psline(-0.65,0)(-0.65,-0.35)(-1,-0.35)
\psline(2,1)(2,1.35)(2.35,1.35)(2.35,1)
\psline(2.35,1.35)(2.70,1.35)(2.70,1)
\psline(2.70,1.35)(3.05,1.35)(3.05,1)
\psline(3.05,1.35)(3.40,1.35)(3.40,1)
\psline(3.40,1.35)(3.75, 1.35)(3.75,1)
\psline(3.75,1.35)(4.05,1.35)(4.05,1)(4,1)
\psline(4.05,1.35)(4.40,1.35)(4.40,1)(4.05,1)
\psline(4,1)(4.35, 1)(4.35,0.65)(4,0.65)
\pscurve{->}(-1,1.5)(0.5,2.3)(2.5,1.8)

\uput*[0](0.3,1.7){ $u_n$ }
\end{pspicture}
\caption{A piece-wise isometric map from the circle into a square.}\end{figure}
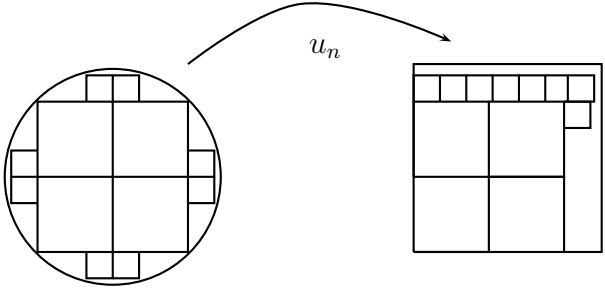
In order to preserve some geometric and physical properties of  the shapes
under consideration, 
we then need  more regularity on the admissible reformations.
 We have the following 

\begin{lemma}
\label{on}
Let $x_0\in \Omega $, $u:X\rightarrow Y$.
If $u$ is differentiable at $x_0$ then
$$ r_u(x_0)=2\Rightarrow \nabla u(x_0)\in O(N),$$ where $O(N)$ denotes the set of orthogonal matrices.
\end{lemma}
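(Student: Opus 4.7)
The plan is to first extract from $r_u(x_0)=2$ the conclusion that $e_u(x_0)=c_u(x_0)=1$, by the same arithmetic argument used in the proof of Lemma~\ref{iso}: since $c_u \geq 1/e_u$ by Lemma~\ref{maggio}, the equality $e_u(x_0)+c_u(x_0)=2$ forces $(e_u(x_0)-1)^2=0$. Once these two pointwise equalities are in hand, the task reduces to translating them into linear-algebraic properties of the differential $A := \nabla u(x_0)$.

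The first equality is immediate: since $u$ is differentiable at $x_0$, the pointwise Lipschitz constant coincides with the operator norm (as already remarked in the paper), so $\|A\| = e_u(x_0) = 1$, which yields $|Av| \leq |v|$ for every $v \in \R^N$.

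The second equality supplies the opposite inequality in every direction. Indeed, from $c_u(x_0) = 1$, for each $\eps > 0$ there exists $r>0$ such that $|u(x)-u(x_0)| \geq (1+\eps)^{-1}|x-x_0|$ whenever $x\in X\cap B(x_0,r)$. Evaluating this along $x = x_0 + tv$ for a unit vector $v$ and exploiting the first-order expansion $u(x_0+tv)-u(x_0) = tAv + o(t)$, after dividing by $t$ and letting $t\to 0^+$ one obtains $|Av| \geq (1+\eps)^{-1}$. Letting $\eps \to 0^+$ and homogenizing yields $|Av| = |v|$ for every $v \in \R^N$.

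Finally, a linear map preserving norms preserves the inner product by polarization, so $A^T A = I$, i.e.\ $A\in O(N)$. I do not expect any genuine obstacle: the entire content is a routine conversion of the metric identities $e_u(x_0) = c_u(x_0) = 1$ into the rigidity of the differential, the role of differentiability being merely to legitimize that conversion.
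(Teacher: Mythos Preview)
Your proposal is correct and follows essentially the same approach as the paper: both first deduce $e_u(x_0)=c_u(x_0)=1$ from $r_u(x_0)=2$ via the inequality $c_u\geq 1/e_u$, then test the differential along directions $x=x_0+tv$ to obtain $|\nabla u(x_0)v|=|v|$ for every $v$, whence $\nabla u(x_0)\in O(N)$. The paper compresses the two inequalities into a single line, while you spell out the $\leq$ side (from $\|A\|=e_u(x_0)=1$) and the $\geq$ side (from $c_u(x_0)=1$) separately and close with polarization; this is the same argument, just more fully written out.
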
 
\begin{proof}
By \eqref{s1} we have $c_u(x_0)=e_u(x_0)=1$. Hence, for every $v\in \R^N$, taking $x=x_0+\delta v$ we get
$$c_u(x_0)= e_u(x_0)=1 \Rightarrow \frac{|\nabla u(x_0)\cdot v|}{|v|}=1 \Rightarrow \nabla u(x_0)\in O(N).$$
\end{proof}
By Liouville Theorem (see for instance \cite{ciarlet}) it follows that every 
$u\in C^1(X;Y) $ such that  $\mathscr{R}(u)=2$ is actually an isometry. 
There are  several generalizations of  Liouville Rigidity Theorem, 
however  (see \cite{ciarlet,camillo1, rigidity})  these  results are not directly applicable in our context since they generally require a constant sign for the Jacobian, as the condition $\nabla u(x) \in SO(N)$ for a.e. $x\in X$.
For instance, the map $u(x)= x$ if $x_1\geq 0$ and $u(x)=(-x_1,x_2, \ldots, x_N)$ if $x_1 \leq 0$ belongs to the Sobolev space $W^{1,2}(\Omega, \R^N)$, $\nabla u(x)\in O(N)$ for a.e. $x$,  but $u$ is not an isometry.\\
 Since  reformations have to preserve the volume, we have  the following result.
 
\begin{theorem}[Rigidity]\label{rigidity}
Let $U \subset \R^N$ be an open connected bounded set. Let $u:\overline U \rightarrow \R ^N $ be a continuous, locally Lipschitz, open map such that  
$\L^N( U)=\L^N( u(U))$ and satisfying the following conditions
 \begin{enumerate}
\item[(i)] $u(\partial U) \subset \partial (u(U))$
\item[(ii)] $u$ is a.e. differentiable and $\nabla u\in O(N)$ a.e. on $U$.
\end{enumerate} Then $u$ is an affine function.
\end{theorem}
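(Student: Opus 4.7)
The plan is to combine the volume-preservation hypothesis with the area formula to force $u$ to be essentially injective, then upgrade this to global injectivity using openness, and finally reduce the $O(N)$-valued gradient to an $SO(N)$-valued one so as to invoke the classical Liouville rigidity for Sobolev maps.

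First, since $u$ is locally Lipschitz and open, by the area formula with multiplicity we have
\begin{equation*}
\int_{\R^N} N(y,u,U)\,dy \;=\; \int_U |Ju(x)|\,dx \;=\; \L^N(U),
\end{equation*}
using $|Ju|=|\det \nabla u|=1$ a.e.\ from hypothesis (ii). On the other hand $N(y,u,U)\geq 1$ for a.e.\ $y\in u(U)$ (indeed $u(U)$ is open), so
\begin{equation*}
\L^N(u(U)) \;\leq\; \int_{u(U)} N(y,u,U)\,dy \;=\; \L^N(U) \;=\; \L^N(u(U)).
\end{equation*}
Hence $N(y,u,U)=1$ for a.e.\ $y\in u(U)$, i.e., $u$ is \emph{essentially} injective.

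Second, I upgrade this to \emph{global} injectivity. Suppose by contradiction $u(x_1)=u(x_2)$ with $x_1\neq x_2$. Choose disjoint open neighborhoods $V_1\ni x_1$, $V_2\ni x_2$ in $U$. By openness, $u(V_1)$ and $u(V_2)$ are open sets both containing $u(x_1)=u(x_2)$, hence their intersection $W$ is open and nonempty. Every $y\in W$ has at least one preimage in $V_1$ and one in $V_2$, so $N(y,u,U)\geq 2$ on a set of positive measure, contradicting the previous step. Therefore $u$ is a continuous open bijection onto $u(U)$, hence a homeomorphism. In particular, $u$ is discrete with $|i(x,u)|=1$ everywhere on $U$.

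Third, by Theorem~\ref{open-discrete} the continuous open discrete map $u$ is either sense-preserving or sense-reversing, so $i(x,u)$ has a constant sign on $U$. Using \eqref{jac} at every differentiability point where $Ju\neq 0$ (which is a.e.\ by (ii) and the fact that $Ju=\pm 1$), we conclude that $Ju$ has constant sign a.e., that is either $\nabla u\in SO(N)$ a.e.\ on $U$ or $\nabla u\in O(N)\setminus SO(N)$ a.e. In the latter case, fixing any reflection $R\in O(N)\setminus SO(N)$ and setting $\tilde u := R\circ u$, one has $\nabla \tilde u = R\nabla u\in SO(N)$ a.e., and $\tilde u$ satisfies all the hypotheses of the theorem with $u(U)$ replaced by $R(u(U))$. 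It therefore suffices to treat the case $\nabla u\in SO(N)$ a.e.

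Finally, in this case $u$ is a locally Lipschitz (hence $W^{1,\infty}_{\loc}\subset W^{1,2}_{\loc}$) map with $\nabla u\in SO(N)$ a.e.\ on the connected open set $U$. The classical Liouville--Reshetnyak rigidity (as recalled in \cite{ciarlet,camillo1,rigidity}) then yields that $u$ is affine, completing the proof. The main obstacle is the second step, where condition (i) on the boundary and the openness of $u$ are essential to rule out ``folded'' configurations such as the paper's example $u(x_1,\dots,x_N) = (|x_1|,x_2,\dots,x_N)$: the area-formula count alone only guarantees injectivity up to a null set, and openness is what promotes it to a genuine homeomorphism so that Theorem~\ref{open-discrete} can pin down a consistent orientation.
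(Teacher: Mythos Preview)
Your proof is correct and follows a genuinely different path from the paper's. Both arguments begin the same way: the area formula combined with $|Ju|=1$ a.e.\ and the volume constraint $\L^N(U)=\L^N(u(U))$ forces $N(y,u,U)=1$ for a.e.\ $y\in u(U)$. From here the paper proceeds via global degree theory: it uses hypothesis (i) to ensure that $u(U)\subset \R^N\setminus u(\partial U)$, so that $\deg(u(x),u,U)$ is well defined for every $x\in U$; then the connectedness of $u(U)$ forces this degree to be constant, hence $Ju$ has constant sign a.e., and Liouville applies. You instead exploit openness directly: if $u(x_1)=u(x_2)$ with $x_1\neq x_2$, disjoint neighborhoods map to overlapping open sets, producing $N\geq 2$ on a set of positive measure --- a contradiction. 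This promotes essential injectivity to a genuine homeomorphism, after which Theorem~\ref{open-discrete} (or the standard fact that homeomorphisms of domains are sense-preserving or sense-reversing) pins down the orientation.

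Your route is more elementary --- it avoids global degree and the associated boundary bookkeeping --- and in fact it never invokes hypothesis (i). Your closing remark that ``condition (i) on the boundary and the openness of $u$ are essential'' is therefore slightly off: your own argument shows that openness alone (together with the volume constraint) already rules out folding, and (i) plays no role in it. The paper's approach, by contrast, genuinely leans on (i) to make the global degree well defined, and does not establish injectivity as an intermediate step. So what you lose in not noticing that (i) is redundant in your argument, you gain in a cleaner reduction that actually strengthens the statement.
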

\begin{proof}
By (ii) it follows that $u$ is locally a $1$-Lipschitz function (see   \cite[Proposition 3.4]{origami}). 
By the Area Formula and (ii) we infer
$$ \L^N(u(U))=\L^N( U)= \int_{ U} |Ju| \ dx = \int _{ \R^N} N(y,u,U)\ dy .$$ Therefore, $N(y,u,U)=1$ for a.e. $y\in  u(U)$.
Observe that \begin{equation}\label{deg} u(U)\subset \R^N \setminus \partial u(U)\subset \R^N \setminus u(\partial U).\end{equation} Then, $\forall x\in U$   ${\rm deg}(u(x),u,U)$ is well defined. Since $u$ is a.e. differentiable, for a.e. $x\in U$ it results (see Lemma 5.9 of  \cite{f-g})
$$ \left |{\rm deg}(u(x),u,U) \right | =  \left | {\rm sign}\ (Ju(x)) \right  | = 1.$$ On the other hand, since $u(U)$ is connected, by \eqref{deg}, $u(U)$  is contained in a connected component of  $\R^N \setminus u(\partial U)$. Therefore, the degree is constant on $u(U)$and so  the sign of the Jacobian $Ju$ is a.e. fixed. The conclusion follows by  Liouville  Theorem for Sobolev maps (see for instance \cite{ciarlet}).
\end{proof}
For a related  rigidity result involving local homeomorphisms see \cite{b-d}. For quasi-isometries over Banach spaces see \cite[Cor. 14.8]{nonlinear}.

\begin{remark}
Condition (i) holds of course for invertible maps $u$. If we are dealing with locally invertible maps, since   continuous and locally invertible maps are open maps, actually by (i) the  equality $ u(\partial U) = \partial (u(U))$ holds true. Moreover, if the map $u:\partial U \rightarrow \partial (u(U))$ is injective, then $u$ is globally invertible (see also \cite{invert-lang}).
Another classical condition for global invertibility holds for simply connected, or simply connectedly exhausted,  target $\overline {u(U)}$ (see \cite{ambrosetti-prodi, b-d}). 
Moreover, suppose to have a continuous, locally invertible,  surjective function $u:X\rightarrow Y$ such that $\nabla u(x)\in O(N)$  for a.e. $x$.
Then, $D=N(y,u,U)$ is constant (see \cite{ambrosetti-prodi}) and  by  Area  Formula we have
$$ D\L^N(Y)=  \int _Y N(y,u,U)dy = \int _X |Ju(x)|dx =\L^N(X).$$ 
Hence, if $\L^N(X)=\L^N(Y)$, it follows that $N(y,u,U)=1$ and hence $u$ is globally invertible.\\

\end{remark}

\begin{theorem}\label{min1}
 Let $\mu \in \P(\Omega)$  and $\nu \in P(Y)$ so that  $\mu= \L^N\res \Omega$,  
 $\nu =\L^N \res Y$. Suppose that for $H,K\in L^1(X, \mu)$ provided by Definition  \ref{def-ref}   the inequality $ H(x)K(x)<2 $ is satisfied. 
Then the variational problem 

\begin{equation}
\label{p1} 
\min \{ \mathscr{R}(u) \;\vert\; u\in \Refo(\mu ;\nu)^{H,K}, u \mbox{ open }  \}
 \end{equation}
  admits solutions whenever $\{u\in \Refo(\mu ;\nu)^{H,K}, u \mbox{ open }  \}\neq \emptyset$.
\end{theorem}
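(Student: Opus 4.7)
The plan is to apply the direct method of the calculus of variations. Let $(u_n)\subset\Refo(\mu;\nu)^{H,K}$ be a minimizing sequence of open maps. On each ball $B(x_i,r_i)$ of the covering $\mathcal A$, the bound $e_{u_n}\le K(x_i)$ together with Lemma~\ref{lip-lemma} implies that $u_n$ is $K(x_i)$-Lipschitz there, while Proposition~\ref{constraint} furnishes a matching lower bi-Lipschitz estimate uniform in $n$. Since $X=\overline\Omega$ is compact I may reduce to a finite subcovering, so $(u_n)$ is equicontinuous and uniformly bounded. Ascoli--Arzelà then yields, up to a subsequence, $u_n\to u$ uniformly on $X$ for some continuous $u:X\to\R^N$.

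I next verify that $u$ is admissible. The push-forward condition $u_\#\mu=\nu$ passes to the limit: for $\varphi\in C_b(Y)$, dominated convergence gives $\int_X\varphi(u_n)\,d\mu\to\int_X\varphi(u)\,d\mu$, and the left-hand side equals $\int_Y\varphi\,d\nu$ for every $n$. The pointwise bi-Lipschitz inequalities of Proposition~\ref{constraint} pass to the limit, yielding $e_u\le K$ and $c_u\le H$ on each ball of the covering; thus $u\in\Refo(\mu;\nu)^{H,K}$, and being locally bi-Lipschitz $u$ is a local homeomorphism, hence an open map.

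The heart of the argument is the lower semicontinuity of $\mathscr{R}$. The expansion term is handled by a direct application of Lemma~\ref{lsc}: $\int_X e_u\,d\mu\le\liminf_n\int_X e_{u_n}\,d\mu$. For the contraction term I exploit the standing hypothesis $HK<2$ and the openness of each $u_n$: Theorem~\ref{metric-inversion} then ensures that each $u_n$ is locally invertible, and Lemma~\ref{reform} rewrites $c_{u_n}(x)=e_{u_n^{-1}}(u_n(x))$. The push-forward identity \eqref{push-eq} transfers the integral to the target, $\int_X c_{u_n}\,d\mu=\int_Y e_{u_n^{-1}}\,d\nu$, and analogously for the limit $u$. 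Using the uniform bi-Lipschitz bounds one checks that $u_n^{-1}\to u^{-1}$ locally uniformly on $Y$ via the estimate $|u_n^{-1}(y)-u^{-1}(y)|\le H\,\|u_n-u\|_\infty$ valid on compact subsets of $u(B)$ (the right-hand side coming from applying $u$ to both sides, using $u_n(u_n^{-1}(y))=y$, and invoking the Lipschitz bound on $u^{-1}$). A second application of Lemma~\ref{lsc} on $Y$ gives $\int_Y e_{u^{-1}}\,d\nu\le\liminf_n\int_Y e_{u_n^{-1}}\,d\nu$, and combining the two estimates yields $\mathscr{R}(u)\le\liminf_n\mathscr{R}(u_n)$, so that $u$ attains the infimum.

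The main obstacle is precisely this lower semicontinuity of the contraction energy, which is not directly accessible from the definition of $c_u$ under uniform convergence. The assumption $HK<2$ together with the restriction to open maps is used precisely to convert this into a standard semicontinuity problem for expansion energies on the target $Y$ via the local inversion theorem; care is also needed to ensure that the radii on which each $u_n^{-1}$ is defined do not shrink to zero along the sequence, which is where the uniformity of the bi-Lipschitz constants provided by the fixed covering $\mathcal A$ is essential.
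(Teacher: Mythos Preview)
Your proof follows essentially the same strategy as the paper's: direct method via Ascoli--Arzel\`a, stability of the push-forward, and lower semicontinuity of both energy terms, with the contraction term handled by passing to the inverse and applying Lemma~\ref{lsc} on the target side.

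Two technical points are glossed over and are precisely where the paper does additional work. First, Proposition~\ref{constraint} gives a bi-Lipschitz estimate on a radius that a priori depends on the map $u$; to obtain a neighborhood on which \emph{all} $u_n$ are simultaneously invertible (and then to show that the limit $u$ is itself locally invertible, hence open), the paper invokes John's theory of quasi-isometric mappings, which furnishes bi-Lipschitz control on balls of explicit radius $r/(HK)$ independent of $n$. Your final paragraph shows you are aware of this issue, but the resolution is not automatic from the covering $\mathcal A$ alone. Second, since Theorem~\ref{metric-inversion} only yields \emph{local} inverses, the global identity $\int_X c_{u_n}\,d\mu=\int_Y e_{u_n^{-1}}\,d\nu$ and a single application of Lemma~\ref{lsc} on all of $Y$ are not directly available. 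The paper instead uses a Vitali-type covering of $\Omega$ by finitely many disjoint neighborhoods $U_i$ on which invertibility holds, absorbs the remainder $E$ via an $\eps$-argument using $H+K\in L^1$, and applies Lemma~\ref{lsc} separately on each open set $u(U_i)$. This localization is the step your sketch is missing.
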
\begin{proof}
Since $\mu= \L^N\res \Omega$, we may assume that  $X=\Omega$. 
Let $(u_n)_{n\in\N}$ be a minimizing sequence. 
Given $x_0\in \Omega $,  by Definition \ref{def-ref} and Lemma \ref{lip-lemma} it follows that the sequence $(u_n)_{n\in\N}$ is locally equi-Lipschitz on $ \overline B(x_0,r)$. 
By  the Ascoli-Arzel\'{a} Theorem we  extract a subsequence  converging, uniformly on compact subsets of $\Omega$,   to a continuous map $u$.  
For this continuous limit map $u:\Omega \rightarrow \R^N$ it is  easily seen that  $u_\# \mu = \nu$.
 It remains to prove  
 $u\in \Refo(\mu;\nu)^{H,K}$, namely  $e_u(x)\leq K(x_0), c_u(x)\leq H(x_0)$ for every  $x\in X\cap \overline B(x_0,r)$. 
 
Since $X=\Omega$, by Lemma \ref{lip-lemma} we get the Lipschitz condition
$$|u_n(x_1)-u_n(x_2)|\leq K(x_0)|x_1-x_2| $$ for every $x_1,x_2 \in  \overline B(x_0,r)\subset \Omega $. Passing to the limit as $n\rightarrow + \infty$ and then as $r\rightarrow 0^+$,  we obtain $e_u(x)\leq K(x_0)$.

Observe that by Theorem~\ref{metric-inversion} the maps $u_n$ are locally invertible. Therefore, using Lemma~\ref{lip-lemma} and 
Lemma~\ref{reform} the inverse maps $u_n^{-1}$ are also locally  equi-Lipschitz. 
Moreover, by the theory of quasi-isometric mappings (see \cite[Theorem III]{john}, \cite{nonlinear}) the maps $u_n$ are equi-Lipschitz on the balls
$B(x_0, \frac{r}{HK})$. In this way 
   we find a common neighborhood $U_{x_0}:=B(x_0, \frac{r}{HK})\subset B(x_0,r)$  in which the functions $u_n$ are all simultaneously invertible (see also  \cite[Proposition 7]{inverse-sobolev},  \cite{inverses}). 
 It follows that $u$ is also locally invertible. Indeed, 
 suppose by contradiction to get   two distinct  sequences $(x_h^1)_{h\in \N}$, $(x_h^2)_{h\in \N}$ converging to $x_0$ such that $u(x_h^1)=u(x_h^2)$ $\forall h\in \N$. Let $\eps >0$ be fixed. By uniform convergence, we find a large integer $n$ such that $|u_n(x) -u(x)|<\eps$ for every $x\in U_{x_0}$.
 Now,  for a large $h$ we may assume $x^1_h, x_h^2 \in U_{x_0}$ and  we compute $$ |x_1^h-x_2^h|=|u_n^{-1}(u_n(x_h^1))-u_n^{-1}(u_n(x_h^2))|\leq H |u_n(x_h^1)-u_n(x_h^2)|\leq  $$ $$ H\left ( |u_n(x_h^1)-u(x_h^1)|+|u(x_h^1)-u(x_h^2)|+ |u(x_h^2)-u_n(x_h^2)| \right ) \leq 2H\eps ,$$ where $H$ is a common Lipschitz constant for $u_n^{-1}$.  By the arbitrariness of $\eps$ we get the contradiction $x_h^1=x_h^2$.
 Observe that $u$ is open by the Domain Invariance Theorem. Hence  $u(\Omega)$ is actually an open set.  Let $x_1\in  \overline B(x_0,r)$ 
 and $y_1=u(x_1)$  be fixed. 
 Adding a constant, we may also suppose $u_n(x_1)=y_1$. 
 By using \cite[Th. II]{john} it results $B_1:=B(y_1,\frac{r}{H})\subset u_n(B(x_1,r))$, where the $u_n^{-1}$ are simultaneously defined.  
 

By Lemma \ref{reform} we get $e_{u_n^{-1}}(y) \leq H(x_0)$ for every $y\in B_1$.  
By Lemma~ \ref{lip-lemma} it follows
$$|u_n^{-1}(y)-u_n^{-1}(y_1)|\leq H(x_0)|y-y_1|,\:\:\:\forall y\in B_1.$$ 
 On the other hand,  the maps $u_n$ are bi-Lipschitz on $U_1=B(x_1, \frac{r_1}{HK})$. For the common neighborhood $U_1$ of $x_1$ we have
$$ |x-x_1|\leq H(x_0)|u_n(x)-u_n(x_1)|,\:\:\: \forall x\in U
_1.$$
Passing to the limit as $n\rightarrow + \infty$ and then  as $x\rightarrow x_1$  we get $c_u(x_1)\leq H(x_0)$. 
Hence $u\in \Refo(\mu ;\nu)^{H,K}$.\\ 
Fixed $\eps >0$, we find $\delta >0$ such that  $\int_{E} (H(x)+K(x))\ d\mu <\eps$ whenever $\L^N(E)<\delta$.
By using a Vitali covering,  we cover $\Omega$, up to a measurable set $E$ such that 
$\L^N(E)=\delta >0$,  by a finite number of disjoint neighborhoods $U_i$ on which $u_n \rightarrow u$ uniformly and  invertibility holds.
Since $u_\# \mu = \nu$ we  compute  
\begin{equation*}\begin{split}&\mathscr{R}(u)
\leq \sum_{i=1}^l\left ( \int_{U_i}  \lip(u)(x) d\mu + \int_{U_i}  \lip(u^{-1})(u(x)) d\mu \right )+ \\
&\int_{E} (H(x)+K(x)) d\mu \leq \sum_{i=1}^l\left ( \int_{U_i}  \lip(u)(x) d\mu + \int_{u(U_i)}  \lip(u^{-1})(y)\ d\nu \right )+\eps. \end{split}\end{equation*}
By Lemma \ref{lsc}   we get 
\begin{eqnarray*}
\mathscr{R}(u)
&\leq& 
\sum_{i=1}^l \liminf_{n\rightarrow + \infty}\left (\int_{U_i}  \lip(u_n)(x) d\mu + \int_{u(U_i)}  \lip(u_n^{-1})(y)\ d\nu \right )+\eps\\
& \leq& 
 \liminf_{n\rightarrow + \infty}\sum_{i=1}^l\left (\int_{U_i}  \lip(u_n)(x) d\mu + \int_{u(U_i)}  \lip(u_n^{-1})(y)\ d\nu \right )+\eps\\
 & \leq& 
\liminf _{n\rightarrow + \infty}\left ( \int_\Omega e_{u_n}(x)\ d\mu + \int_{\Omega} c_{u_n}(x)\ d\mu \right ) +\eps = \liminf _{n\rightarrow + \infty} \mathscr{R}(u_n) + \eps .
\end{eqnarray*}
Letting $\eps \rightarrow 0^+$ we get the thesis. 
\end{proof}
 \begin{remark}\label{remark-min}
 By using essentially the same tools employed in the proof of Theorem \ref{min1} and  according to Theorem \ref{invertibility1} and Theorem \ref{incompressible} one can prove existence results for the variational problems 
 $\min  \{ \mathscr{R}(u) \;\vert\; u\in A_i \},$ where  
 \begin{equation*}\begin{split}&A_1=\{u\in \Refo(\mu ;\nu)^{H,K}, e_u<\sqrt[N]{2} \},  A_2=\{u\in \Refo(\mu ;\nu)^{H,K}, u \mbox{ incompressible}  \}, \\ & A_3=\{u\in \Refo(\mu ;\nu)^{H,K}, u \mbox{ (locally) invertible}  \}.\end{split}\end{equation*}
 \end{remark}
 
\begin{remark}
If for reformation maps the surjection property $u(X)=Y$ is required, 
we may argue as follows. 
To check that $u$ is onto, let us fix $y_0\in Y$. Observe that  $u_n^{-1}$ are locally equi-Lipschitz. Arguing as in the proof of Theorem in \ref{min1} for the sequence $u_n^{-1}$ we find a common neighborhood $B(y_0,r)$ such that 
$ u_n^{-1}$ 
are simultaneously homeomorphisms. Therefore, since $u_n(X)=Y$, we find a sequence $x_n\rightarrow x_0$ 
such that $u_n(x_n)=y_0$. Then we have
\begin{eqnarray*}
|u(x_0)-y_0|
&\leq& 
|u(x_0)-u(x_n)|+|u(x_n)-u_n(x_n)|+|u_n(x_n)-y_0|\\
&\leq &
 |u(x_0)-u(x_n)|+\|u-u_n\|_\infty \rightarrow 0
 \end{eqnarray*}
as $n\rightarrow + \infty$.
\end{remark}
\begin{remark}
Observe that thanks to the compactness of $\Refo (\mu;\nu)^{H,K}$, no coercitivity conditions on  the energy functional $\mathscr{R}$ are needed (See \cite[Ch. II Section 9]{b-d} for related results in the setting of mappings with bounded distortion). In the case of $H,K \in L^p(X, \mu)$ the above minimization  result could be obtained  by using Rellich-Kondrakov compactness in Sobolev spaces and the l.s.c of the $p$-Dirichlet energy. 
\end{remark}
\begin{remark}\label{comp}
For $X$ compact, considering finite coverings, it turns out that $H, K \in L^\infty$. Therefore, in such a case we may consider $H,K$ as two universal constants.  However the proof of Theorem \ref{min1} works as well for the non-compact case. It would be interesting to develop an analogous theory under weaker requirement on the functions  $H,K$. 
For instance,  supposing $H,K\in L^p$ with $p>N$, by Morrey's inequality $$ |u(x)-u(y)|\leq C(N,p)|x-y|^{1-\frac{N}{p}} \| \nabla u \|_p $$  it follows  that the sequences $u_n, u_n^{-1}$ of the proof of Theorem \ref{min1} are locally equi-Holder. Hence we get existence of minimizers for example in the  set $A_3$ as in Remark \ref{remark-min}. 
 To check that the set $A_3$  is closed one can also use the results of \cite{BIF, univalent, inverses}.

\end{remark}

 Isometric measures are characterized by the following statement.
\begin{theorem}\label{main}
Let $\mu \in \P(\Omega)$ and   $\nu \in  \P(Y)$, so that 
$\mu= \L^N\res \Omega$,  $\nu =\L^N \res Y$, 
 for a given bounded set $Y$.
 Then, $\mathcal E(\mu, \nu)=2$ if and only if there exists an  isometry $u$ 
 such that  $u_\# \mu = \nu$.
\end{theorem}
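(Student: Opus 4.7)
The ``if'' direction is the cheap one: an isometry $u:\overline{\Omega}\to Y$ with $u_\#\mu=\nu$ satisfies $e_u(x)=c_u(x)=1$ at every $x$, so that $r_u\equiv 2$ and $\mathscr{R}(u)=2$. Such a $u$ belongs to $\Refo(\mu;\nu)^{H,K}$ whenever $H,K\geq 1$ (which is the only interesting regime, since otherwise no isometry could be admissible), and Lemma~\ref{maggio} provides the pointwise lower bound $\mathscr{R}(v)\geq 2$ for every admissible $v$. Thus $\mathcal{E}(\mu,\nu)=2$.

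For the converse, the plan is to promote the equality $\mathcal{E}(\mu,\nu)=2$ into the existence of a genuine minimizer, and then invoke rigidity. A minimizing sequence $u_n\in\Refo(\mu;\nu)^{H,K}$ is locally equi-Lipschitz by \eqref{bounds}, so the Ascoli--Arzel\`a extraction and lower semicontinuity machinery from the proof of Theorem~\ref{min1} produce a limit $u\in\Refo(\mu;\nu)^{H,K}$ with $\mathscr{R}(u)\leq 2$. Combined with the pointwise bound $r_u\geq 2$ from Lemma~\ref{maggio}, this forces
\begin{equation*}
r_u(x)=2\qquad\text{for }\mu\text{-a.e.\ }x\in\Omega,
\end{equation*}
which in turn, using the same reasoning as in the proof of Lemma~\ref{iso}, gives $e_u(x)=c_u(x)=1$ almost everywhere in $\Omega$.

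Rigidity then enters. Being locally Lipschitz, $u$ is a.e.\ differentiable by Rademacher's theorem, and at each differentiability point Lemma~\ref{on} yields $\nabla u(x)\in O(N)$. The pointwise identity $e_u=1<\sqrt[N]{2}$ places $u$ squarely in the small-expansion regime, so Theorem~\ref{invertibility1} makes $u$ globally invertible and hence a homeomorphism from $\overline{\Omega}$ onto its image. This takes care of hypothesis (i) in Theorem~\ref{rigidity}, namely $u(\partial\Omega)=\partial(u(\Omega))$, while the push-forward condition $u_\#\mu=\nu$ together with $|Ju|=1$ a.e.\ and injectivity gives $\L^N(\Omega)=\L^N(u(\Omega))$ via the Area Formula. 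Theorem~\ref{rigidity} then forces $u$ to be affine, and combined with $\nabla u\in O(N)$ this affine map is an isometry $u(x)=Ax+b$ with $A\in O(N)$ satisfying $u_\#\mu=\nu$.

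The main obstacle is the passage from an infimum to a regular minimizer on which Theorem~\ref{rigidity} can act. Fortunately the structural identity $e_u=c_u=1$ that falls out of $\mathscr{R}(u)=2$ automatically places any candidate minimizer inside the globally invertible class of Theorem~\ref{invertibility1}, so the delicate inversion issues highlighted in Remarks~\ref{inversion} and~\ref{quasi} do not obstruct the argument: the constraints that made existence nontrivial elsewhere become free consequences of minimality here.
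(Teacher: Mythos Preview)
Your proof is correct and follows essentially the same route as the paper: existence of a minimizer via the compactness and lower semicontinuity machinery of Theorem~\ref{min1}, global invertibility via Theorem~\ref{invertibility1} since $e_u=1<\sqrt[N]{2}$ a.e., pointwise orthogonality $\nabla u\in O(N)$ via Lemma~\ref{on}, and finally rigidity via Theorem~\ref{rigidity}. Your write-up is in fact more explicit than the paper's in treating the ``if'' direction and in checking the hypotheses of Theorem~\ref{rigidity} (openness, the boundary condition~(i), and the volume identity via $|Ju|=1$), whereas the paper compresses these into a citation of \cite[Th.~14.1]{nonlinear} for the passage from local to global isometry.
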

\begin{proof}
By Theorem \ref{min1} we get a minimizer $u:\Omega \rightarrow \R^N$ which belongs to $\Refo(\mu ;\nu)^{H,K}$. By Theorem \ref{invertibility1} and Remark \ref{invertibility1} it follows that $u$ is globally invertible. By Lemma \ref{on} and Theorem \ref{rigidity} it follows that $u$ is a local isometry, then (see for instance \cite[Th. 14.1]{nonlinear}),  $u$ is an isometric map. 

\end{proof}


By Theorem~\ref{main} we  have that  by reforming a flat configuration 
$\mu$ in a corrugated one $\nu$ it results 
$\mathcal E(\mu, \nu)>2$. This last fact gives an alternative proof of the so-called Grinfeld instability (see \cite{FGM}),  indeed,  by the changing of the geometry, any possible reformation must expand or contract some portion of the body.

\section{Generalized reformations}
The notion of reformation introduced in the previous section has some restrictions, indeed  it is easy to exhibit examples, like the one in Figure $4.1$, 
in which every reformation map has a large cost while  allowing fractures of the body leads to map the initial measure  by using local isometries. 

\begin{figure}[htbp]\label{piano}	\centering		\includegraphics[scale=0.8]{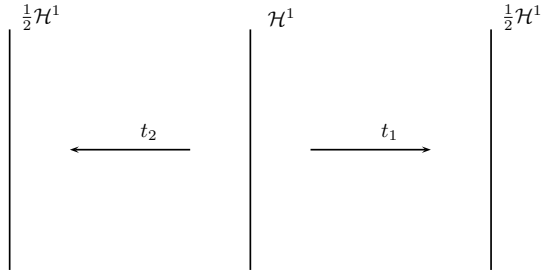}	\caption{An isometric fractured reformation.}\end{figure}

Here we  introduce a notion of \emph{generalized reformation}. Our approach relies on measure theoretic tools mostly developed 
in the field of optimal 
 mass transportation  (see \cite{a1, v2}) where maps satisfying  $u_\#\mu=\nu$ are called transport maps. 
 A natural generalization of the transport map (reformation map) is given by the notion of transport plan. A transport plan between two probability measures  $\mu\in\P(X)$ and $\nu\in \P(Y)$ 
 is a measure $\gamma \in \P(X\times Y)$ such that 
 $\pi ^1_ \# \gamma = \mu$, $\pi ^2_\# \gamma = \nu$, where $\pi ^i $, $i=1,2$  denote the projections of $X\times Y$ on its factors. 
 A transport map
 $u$ induces the transport plan $\gamma_u:= (I\times u)_ \# \mu$, where $I$ is the identity map of $X$. Observe that the set of transport plans
  with marginals $\mu$ and $\nu$,   denoted by $\Pi(\mu,\nu)$, 
   is never empty since  it always contains the transport plan $\mu \otimes \nu$.\\ 
 We shall call \textit{generalized reformation}, or \textit{reformation plan},  of $\mu$ into $\nu$ any transport plan $\gamma$  with marginals $\mu$ and $\nu$.\\ 
 Let us recall some known results which 
     will  play a crucial role in the following   (we refer to \cite{AFP, a1}). 


\begin{definition}
\label{borel} 
Let $\M(Y)$ be the space of Radon measures on $Y$. 
A map $\lambda :X\rightarrow \M (Y)$ is said to be Borel if for any open set 
$B\subset Y$ the function $x\in X \mapsto \lambda_x (B)$ is a real valued Borel map.
Equivalently, $x\mapsto \lambda_x$ is a Borel map if for any Borel and bounded map $ \varphi :
X\times Y \rightarrow \R$ it results that the map $$ x \in X
\mapsto \int_Y \varphi (x,y) d\lambda _x $$ is Borel.
\end{definition}
\begin{theorem}[Disintegration theorem]
\label{disintegration} Let $\gamma  \in \P (X\times Y ) $ be given and let
$ \pi^1 : X\times Y  \rightarrow X$ be the first projection map of $X\times Y$, we set $\mu = (\pi^1)_ \# \gamma$.  Then for $\mu-a.e.\ x \in X $ there exists $\nu _x \in \P
(Y)$   such that
\begin{itemize}
\item[\rm (i)] the map $x \mapsto \nu _x$ is Borel,\\
\item[\rm (ii)]$ \forall \varphi \in \C _b (X\times Y ) : \, \int _{X\times Y} \varphi (x,y) d\gamma
= \int _X \left ( \int _Y \varphi (x,y) d\nu _x(y)
\right )d\mu (x) .$
\end{itemize}
Moreover the measures $\nu _x$ are uniquely determined up to a negligible set with respect to  $\mu$. 
\end{theorem}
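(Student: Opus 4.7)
\smallskip

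\noindent\textbf{Proof proposal.} The plan is to construct the disintegration $\{\nu_x\}_{x\in X}$ via a Radon--Nikodym argument applied to a countable family of test functions, and then to upgrade the selection so that, for $\mu$-a.e.\ $x$, one simultaneously obtains a Borel family of probability measures. First I would fix a countable algebra $\mathcal{F}=\{\varphi_n\}\subset C_b(Y)$ that separates points and is dense (for the uniform norm on compact sets), exploiting the standing assumption that $Y$ is a separable metric space. For each $\varphi\in \mathcal{F}$ define the finite signed measure on $X$
\begin{equation*}
\lambda_\varphi(A):=\int_{A\times Y}\varphi(y)\,d\gamma,\qquad A\subset X \text{ Borel}.
\end{equation*}
Since $\lambda_\varphi \ll \mu$ (as $(\pi^1)_\#\gamma=\mu$), the Radon--Nikodym theorem yields a Borel density $\psi_\varphi:X\to\R$ with $\lambda_\varphi=\psi_\varphi\,\mu$ and $\|\psi_\varphi\|_{L^\infty(\mu)}\le\|\varphi\|_\infty$.

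The second step is to choose representatives $\psi_{\varphi_n}$ consistently. Outside a $\mu$-negligible set $N\subset X$, the assignment $\varphi_n\mapsto \psi_{\varphi_n}(x)$ is $\Q$-linear, monotone, and satisfies $\psi_{\mathbf{1}}(x)=1$ (this holds $\mu$-a.e.\ for each single identity, and the countability of $\mathcal{F}$ lets us take the union of the exceptional sets). Therefore for every $x\notin N$ the functional $\Lambda_x:\varphi_n\mapsto\psi_{\varphi_n}(x)$ extends by continuity and density to a positive linear functional on $C_b(Y)$ of norm one, and the Riesz representation theorem produces a unique $\nu_x\in\P(Y)$ with
\begin{equation*}
\int_Y\varphi\,d\nu_x=\Lambda_x(\varphi),\qquad \varphi\in C_b(Y).
\end{equation*}
For $x\in N$ I would simply set $\nu_x$ to be an arbitrary fixed probability on $Y$.

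Next, Borel measurability in the sense of Definition~\ref{borel} follows because for every $\varphi\in\mathcal{F}$ the map $x\mapsto\int_Y\varphi\,d\nu_x=\psi_\varphi(x)$ is Borel by construction; passing to uniform limits on $C_b(Y)$ and then to monotone limits upgrades this to all bounded Borel functions of $y$, so in particular $x\mapsto\nu_x(B)$ is Borel for every open (hence Borel) $B\subset Y$. Property (ii) is verified first for $\varphi(x,y)=\mathbf{1}_A(x)\varphi_n(y)$ directly from the definition of $\psi_{\varphi_n}$, is extended to linear combinations, and then to arbitrary bounded Borel $\varphi$ on $X\times Y$ by a standard monotone class / Dynkin-system argument.

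Finally, uniqueness up to $\mu$-negligible sets is obtained by noting that if $\{\tilde\nu_x\}$ is another disintegration, then for each $\varphi_n\in\mathcal{F}$ equality (ii) forces $\int_Y\varphi_n\,d\nu_x=\int_Y\varphi_n\,d\tilde\nu_x$ for $\mu$-a.e.\ $x$; taking the countable union of these exceptional sets yields a single $\mu$-null set outside which $\nu_x=\tilde\nu_x$ by the separation property of $\mathcal{F}$. The main technical obstacle is precisely the simultaneous null-set issue in passing from one $\varphi$ at a time to a jointly well-defined Borel family $x\mapsto\nu_x$; this is what forces the use of separability of $Y$ and the countable choice of test functions.
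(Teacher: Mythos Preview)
The paper does not give a proof of this theorem: it is stated as a known result with references to \cite{AFP} and \cite{a1}. Your argument is essentially the classical proof found in those sources---Radon--Nikodym derivatives against $\mu$ for a countable separating family of test functions, reassembly into a positive normalized functional, and Riesz representation---so there is nothing to compare beyond noting that you have written out what the paper chose only to quote.

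One technical point deserves care. The step ``the Riesz representation theorem produces a unique $\nu_x\in\P(Y)$'' from a positive normalized functional on $C_b(Y)$ is immediate only when $Y$ is compact; for a merely separable metric $Y$, $C_b(Y)$ is in general not separable, your family $\mathcal{F}$ cannot be uniformly dense in it, and a positive norm-one functional on $C_b(Y)$ need only correspond to a finitely additive probability. The standard fix (and this is how \cite{AFP} proceeds) is to assume $Y$ Polish, embed it as a Borel subset of a compact metrizable $\hat Y$, disintegrate $\gamma$ viewed as a measure on $X\times\hat Y$, and then observe that $\nu_x(Y)=1$ for $\mu$-a.e.\ $x$ because $(\pi^2)_\#\gamma$ is concentrated on $Y$. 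In the paper's actual use $Y\subset\R^N$ is compact (cf.\ assumption~\eqref{domain}), so your version suffices there, but the gap is real at the level of generality in which the theorem is stated.
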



Let $\gamma\in\Pi(\mu,\nu)$,
as usual we will  write $ \gamma =  \nu _x \otimes \mu $, 
 assuming that $ \nu _x $
satisfy the condition (i) and (ii) of Theorem~\ref{disintegration}. Obviously the transport plan $\mu \otimes \nu$ corresponds to the constant map $x\mapsto\nu_x=\nu$. 
Let $u: X\rightarrow Y$, 
observe that for the  transport plan $\gamma _u:= (I\times u)_ \# \mu$, the  Disintegration Theorem yields $\gamma _u=  \delta _{u(x)}\otimes \mu $.

\begin{remark}
{\rm Let  $X\subset \R ^N$,  we recall that the  barycenter of a measure $\mu\in \P(X)$ is given by 
$$\beta (\mu )= \int _{X}x\ d\mu .$$ 
If $\gamma =  \nu _x \otimes \mu$, then, by Theorem \ref{disintegration} the map $x\mapsto \beta (\nu _x)$ is measurable. It is possible to define a generalized pointwise expansion and  compression  energy through the pointwise Lipschitz constant of the map 
$x\mapsto \varphi (x):=\beta (\nu _x)$. 
Observe that for a transport map $u$, since $\beta (\delta _x)=x$,  we have $$r_{\varphi  }(x_0)=r_u(x_0).$$
However, it may happen that the map $\varphi $ is an isometry although the target are  quite far from being \emph{isometric} as described  in Figure $4.2$ 
.}
\end{remark}
\begin{figure}[htbp]\label{baricenter-fig}	\centering		\includegraphics[scale=0.8]{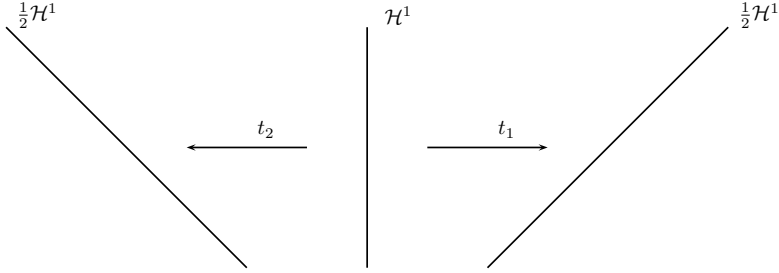}	\caption{A barycenter isometric reformation.}\end{figure}


In the sequel  we will introduce the notion of
generalized pointwise compression and expansion energy through the 
notion of  $1$-Wasserstein distance of measures.

\begin{definition}
\label{wd} Let $\mu, \nu \in \P(X)$, the $1$-Wasserstein distance between $\mu$ and $\nu$ is defined  by
\begin{equation}
W(\mu,\nu )= \inf_{\gamma\in\Pi(\mu,\nu)} \int_X d(x,y) \
 d\gamma(x,y). 
\end{equation}
\end{definition}
Let us recall that by Kantorovich duality (see \cite{a1,granieri, v2}) the $1$-Wasserstein distance between $\mu$ and $\nu$ can be expressed as follows
\begin{equation}
\label{wd1}
W(\mu,\nu )=\sup\left\{\int_X\varphi\;d(\mu-\nu)\;\vert\; \varphi\in\lip_1(X)\right\}.
\end{equation}

\begin{lemma}
\label{convex}
The balls of $(\P(Y),W)$ are $1$-convex. 
\end{lemma}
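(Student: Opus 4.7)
The lemma should be read, by analogy with the paper's earlier definition of a $C$-quasi-convex metric space applied now to a subset, as saying that any two probability measures in a $W$-ball can be joined by a curve lying in that ball whose length equals their $W$-distance. My plan is to exhibit such a curve as the \emph{linear interpolation}
\[
\mu_t := (1-t)\mu_0 + t\mu_1, \quad t \in [0,1],
\]
which is itself an element of $\P(Y)$ because the simplex of probability measures is convex. The proof then splits into two claims: (i) this curve is a $W$-geodesic, and (ii) it stays inside any ball that contains its endpoints.

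For (i), fix an optimal $\gamma \in \Pi(\mu_0,\mu_1)$ and, for $0 \leq s \leq t \leq 1$, introduce the candidate plan
\[
\pi_{s,t} := (1-t)\,(I\times I)_\#\mu_0 \; +\; (t-s)\,\gamma \; +\; s\,(I\times I)_\#\mu_1 \; \in\; \P(Y\times Y).
\]
Inspecting the two projections shows $\pi_{s,t}\in \Pi(\mu_s,\mu_t)$, and since the diagonal pieces $(I\times I)_\#\mu_i$ are concentrated on $\{x=y\}$ they contribute nothing to the transport cost, so the cost of $\pi_{s,t}$ reduces to $(t-s)\,W(\mu_0,\mu_1)$. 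Hence $W(\mu_s,\mu_t) \leq (t-s)\,W(\mu_0,\mu_1)$, and applying the triangle inequality along an arbitrary partition $0 = t_0 < \dots < t_N = 1$ forces equality for all $s,t$. In particular the total length of $(\mu_t)_{t\in[0,1]}$ is exactly $W(\mu_0,\mu_1)$.

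For (ii), suppose $\mu_0,\mu_1 \in B_W(\sigma,r)$, and choose optimal plans $\gamma_i \in \Pi(\mu_i,\sigma)$ for $i=0,1$. The convex combination $(1-t)\gamma_0 + t\gamma_1$ belongs to $\Pi(\mu_t,\sigma)$, so Definition~\ref{wd} yields
\[
W(\mu_t,\sigma) \leq (1-t)\,W(\mu_0,\sigma) + t\,W(\mu_1,\sigma) < r.
\]
Combined with (i) this gives the claimed $1$-convexity of the ball. I do not foresee a serious obstacle: the whole argument rests on producing admissible transport plans as convex combinations of prescribed ones and reading off their cost. The only delicate bookkeeping is verifying the marginals and the cost of the plan $\pi_{s,t}$ in step (i); everything else is a direct consequence of the convex structure of $\Pi(\cdot,\cdot)$.
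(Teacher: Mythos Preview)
Your proof is correct, and the curve you use---the linear interpolation $\mu_t=(1-t)\mu_0+t\mu_1$---is exactly the one the paper uses. The difference lies in the tool invoked to control the Wasserstein distance. The paper appeals to Kantorovich duality~\eqref{wd1}: since $\nu_{t+h}-\nu_t=h(\nu_1-\nu_2)$, taking the supremum over $\varphi\in\lip_1(Y)$ of $\int\varphi\,d(\nu_{t+h}-\nu_t)$ immediately gives $W(\nu_{t+h},\nu_t)=hW(\nu_1,\nu_2)$, and the same linearity argument applied to $\nu_t-\mu$ yields the convexity bound $W(\nu_t,\mu)\le tW(\nu_1,\mu)+(1-t)W(\nu_2,\mu)$. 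You instead work on the primal side, building explicit admissible plans as convex combinations of optimal plans and diagonal (identity) plans. Your approach is a bit longer but entirely self-contained and makes the geodesic property explicit; the paper's duality argument is shorter because the linear dependence of $\nu_t$ on $t$ passes directly through the dual formula, but it presupposes the Kantorovich duality result. Either route is standard and adequate here.
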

\begin{proof}
Let $\mu\in\P(Y)$, $r>0$ be fixed, we consider   $\nu_1, \nu _2 \in B:=B(\mu,r)\subset \P(Y)$.
For every $t\in[0,1]$, let  $\nu_t:=t\nu_1+(1-t)\nu_2$. 
Then, by considering \eqref{wd1}, for any fixed  $\varphi\in \lip_1(Y)$, we compute
\begin{eqnarray*}
  \int_Y \varphi \ d(\nu_t-\mu)
  &=&
   t \int_Y \varphi \ d(\nu_1-\mu)+(1-t)\int_Y\varphi\ d(\nu_2-\mu)\\
  & \leq&
   tW(\nu_1,\mu)+(1-t)W(\nu_2,\mu)\leq r 
   \end{eqnarray*} 
Passing to the supremum with respect to $\varphi\in\lip_1(Y)$ we get
$W(\nu_t,\mu)\leq r$, hence $\nu_t\in B$ $\forall t\in[0,1]$. Observing that $W(\nu_{t+h}, \nu_t)=hW(\nu_1,\nu_2 )$ it follows that the length of the curve $\nu_t$ (see Appendix A) amounts to
$ l(\nu_t)=\int_0^1 | \dot   \nu_t| dt = W(\nu_1,\nu_2 )$.
\end{proof}
As stated in Section $1$, over the metric space $(\P(Y), W)$ the above Lemma allows to derive local Lipschitz conditions  from just pointwise Lipschitz bounds  (see also \cite{point-lip}).
Let  $\gamma =  \nu_x\otimes \mu $,  the function 
\begin{equation}
\label{f}
f:X\rightarrow (\P(Y), W),\:\:\:\: f(x)=\nu_x. 
\end{equation}
will be called \textit{disintegration map}.
Let us  introduce the notion of generalized compression and expansion energy in terms of the disintegration map $f$.
\begin{definition}[Generalized expansion and  compression  energy]
\label{g-c-e}
For any 
 reformation plan  $\gamma =  \nu_x\otimes \mu $ of $\mu$ into $\nu$ we define the pointwise   expansion  energy
 \begin{equation}
\label{gesp}
  e_\gamma(x_0):= \limsup_{x\rightarrow x_0} \frac{W (\nu _x, \nu _{x_0})}{|x-x_0|},
  \end{equation}
   and  the pointwise compression energy 
    \begin{equation}
\label{gcomp}
        c_\gamma(x_0)= \limsup_{x\rightarrow x_0} \frac{|x-x_0|}{W(\nu _x,  \nu_{x_0})}.
\end{equation}        
          \end{definition}
\noindent  By using \eqref{f} we can state
\begin{equation}
\label{gamma-f}
  e_\gamma(x)=e_f(x),\:\: c_\gamma (x)=c_f(x). 
\end{equation}
The pointwise reformation energy is then defined by
$$ r_\gamma (x_0)=e_\gamma(x_0)+c_\gamma(x_0).$$

 \begin{remark}
 {\rm Notice  that,  since $W(\delta _x, \delta _y)=|x-y|$, if $\gamma$ is a reformation plan induced by a map $u:X\rightarrow Y$, say   $\gamma_u=(I\times u)_\# \mu$ and $f_u$ is the disintegration map of $\gamma$, then it results 
 $$r_\gamma (x_0)=r_{f_u}(x_0)=r_u(x_0).$$ }
 \end{remark}

\begin{definition}
\label{constraint-plans}
Given   $H,K:X\rightarrow  ]0,+\infty [$,  $H,K \in L^1(X, \mu)$ and a fixed covering $\mathcal A$ of $X$ made by balls, 
 we define the set $  \GRefo (\mu;\nu)^{H,K}\subset \Pi(\mu,\nu)$ as the subset of reformation plans $\gamma$ of $\mu$ into $\nu$   such that
 \begin{equation}\label{k} \forall x_0 \in X : \ \exists\; B(x_0,r)\in \mathcal A  \ s.t.\  e_\gamma(x)\leq K(x_0),\; c_\gamma (x)\leq H(x_0)\end{equation}
  $\forall x\in \Omega \cap \overline B(x_0,r)$. 
 \end{definition}

\begin{remark}
{\rm  
By \eqref{gesp}-\eqref{gamma-f} the role played by the disintegration map is clear, 
hence one is led to argue as in the previous section  trying to establish the analogous of 
Theorem~\ref{invertibility1} in the case of disintegration maps.
Unfortunately in the general  case of metric spaces some tools as degree theory are not available. Therefore, it is not clear if local invertibility follows by \eqref{k}.

Nevertheless, by restricting the analysis to the case of \textit{small reformations}, i.e. satisfying $HK\leq \mu_0$, for enough small constant $\mu_0$, 
it is possible to prove some  global invertibility results suitable to the present case.
For instance, assuming   that $\Omega$ is a ball and that $f$ is a local homeomorphism, then there exists a constant $\mu_0$ such that $HK<\mu_0$ implies $f$ globally invertible (see \cite{john, nonlinear} and \cite{gevirtz} for other classes of domains $\Omega$). 
}
\end{remark}
\begin{definition} Let us define  
\begin{equation}\label{smarr_ref_plans} \GRefo_0(\mu,\nu)^{H,K}=\{\gamma\in \GRefo(\mu,\nu)^{H,K}\:\vert\:\gamma =f(x)\otimes \mu, f:\Omega \rightarrow \P(Y) \mbox{ invertible }\}. \end{equation}  \end{definition}  


\section{Finding reformation plans}

In the following examples we show  that it is possible to compare shapes with regular disintegration maps despite no regular transport map does  exist. 
\begin{figure}[htbp]
	\centering
		\includegraphics[scale=0.8]{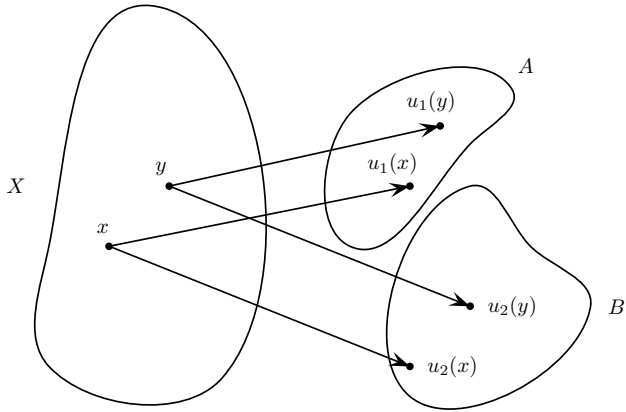}
	\caption{A disconnected target reformation}
	\label{disco}
\end{figure}


\begin{example}\label{disjoint}Consider a regular domain $X\subset \R^N$  splitted into $Y=A\cup B$ for two disjoint regular domains $A,B\subset\R^N$ in such a way $1=\L^N(X)=\L^N(A)+\L^N(B)$. We find (see  \cite{ bilip, wolan}) two diffeomorphisms
$u_1 : X\rightarrow A$, $u_2:X\rightarrow B$ so that 
$|Ju_1|=\L^N(A)$, $ |Ju_2|=\L^N(B)$.\\
Diffeomorphisms with constant Jacobian can be constructed by using  the results of \cite{d-m}. Indeed, let $\varphi : \Omega \rightarrow \Omega_1$ be a diffeomorphism. Assume for instance  $J\varphi(x) >0$ $\forall x\in \Omega$ and let   $f(x)=\frac{\L^N(\Omega)}{\L^N(\Omega_1)}J\varphi(x)$. 
Then
$$\int_\Omega f(x) \ dx =\frac{\L^N(\Omega)}{\L^N(\Omega_1)}\int_\Omega J\varphi (x) \ dx =\L^N(\Omega).$$ By the results of \cite{d-m}, there exists a diffeomorphism $u:\Omega \rightarrow \Omega$ such that $Ju=f$. Setting $\psi=\varphi \circ u^{-1} :\Omega \rightarrow \Omega_1$ it follows that 
$J\psi=\frac{\L^N(\Omega)}{\L^N(\Omega_1)}$.


 Let  $\nu_x=\L^N(A)\delta_{u_1(x)}+\L^N(B)\delta _{ u_2(x)}$, 
then  the reformation plan $\gamma :=  \nu_x\otimes \mu  $ has $\mu =\L^N\res X$ and $\nu=\L^N\res Y$ as marginals. We claim that the function $f(x)=\nu_x$ is, at least locally,  bi-Lipschitz. Indeed it results
$$ W(\nu_x, \nu _{x_0})=\L^N(A)|u_1(x)-u_1(x_0)|+\L^N(B) |u_2(x)-u_2(x_0)|.$$ Since $u_1, u_2$ are diffeomorphisms, we find constants $K_{1,2}, H_{1,2}, H,K$ such that
\begin{eqnarray*}
\frac{1}{H}|x-x_0|
&\leq& \frac{\L^N(A)}{H_1} |x-x_0|+\frac{\L^N(B)}{H_2}|x-x_0|\\
& \leq &
\L^N(A)|u_1(x)-u_1(x_0)|+\L^N(B)|u_2(x)-u_2(x_0)|\\
&=&
W(\nu_x, \nu _{x_0}),
\end{eqnarray*}

\begin{eqnarray*} 
W(\nu_x, \nu _{x_0})
&=&
\L^N(A)|u_1(x)-u_1(x_0)|+|\L^N(B) |u_2(x)-u_2(x_0)|\\
&\leq &  
\L^N(A)K_1|x-x_0|+\L^N(B)K_2|x-x_0|\\
&\leq & 
K|x-x_0|.
\end{eqnarray*}

\begin{remark} 
The above construction is possible also for a class of star-shaped domains as in \cite[Theorem 5.4]{fonseca-parry} by considering bi-Lipschitz maps in place  of diffeomorphisms.
\end{remark}
\end{example}

Moreover, generalized reformation maps are useful to compare \emph{near-isometric} shapes.

\begin{example}\label{bending}
Consider a rectangle $R$ and a bended one with the bended size of $\frac{1}{n}$ (see Figure $0.1$). 
Consider the maps
$$u_1(x)=\left (1-\frac{1}{n}\right )\left ( Ax+a\right ) , \quad u_2(x)=\frac{1}{n}\left ( Bx+b\right ) $$ for orthogonal  matrices $A,B$ and then the reformation plan 
$$ \gamma= \left ( \left (1-\frac{1}{n}\right )\delta_{u_1(x)}+\frac{1}{n}\delta_{u_2(x)}\right ) \otimes \mu ,$$
where $\mu = \L^N\res R$.
We compute
$$ W(\nu_x,\nu_{x_0})= \left (1-\frac{1}{n}\right )|u_1(x)-u_1(x_0)|+\frac{1}{n}|u_2(x)-u_2(x_0)|=
\left (\left ((1-\frac{1}{n}\right )^2+\frac{1}{n^2}\right )|x-x_0|.$$
Therefore the function $f(x)=\nu _x $ is, at least locally, bi-Lipschitz and 
$$ e_\gamma (x_0)= \left (1-\frac{1}{n}\right )^2+\frac{1}{n^2}\rightarrow 1 $$ as $n\rightarrow + \infty$, while 
$ c_ \gamma (x_0)=\frac{1}{e_\gamma (x_0)}$. 
\end{example}

\begin{figure}[htbp]\label{bari}\begin{pspicture}(-5.5, -3.5)(8.5, 3.5)\psaxes[labels=none, linecolor=gray]{->}(0,0)(-3.5,-2.5)(5.5,3)
\psplot{0}{1}{x 1 add}\psplot{0}{1}{x -1 mul -1 add}\uput*[0](0,0.4){$2\sqrt 2$}\psline(0,0)(1,0)\uput*[0](1,2){$y=x+1$}\uput*[0](1,-2){$y=-x-1$}\end{pspicture}\caption{An horizontal segment, with mass $2\sqrt2$, splitted into two different ones. }\end{figure}
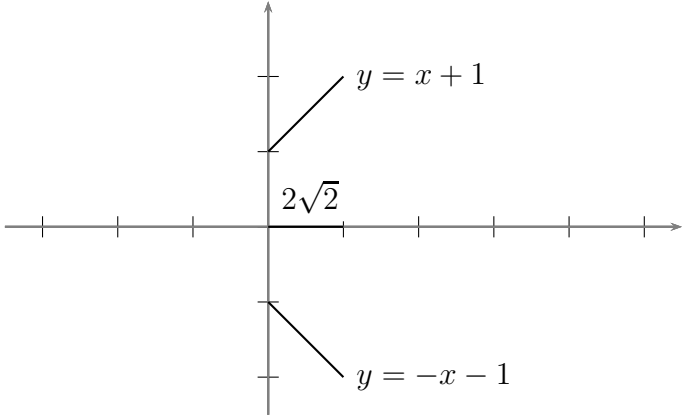
\begin{example}Consider the situation displayed  in Figure $5.3$. 
Defining $\nu _x=\frac{1}{2}\left ( \delta _{x+1}+\delta_{-x-1}\right )$,  we find $W(\nu_x,\nu_{x_0})=\sqrt 2 |x-x_0|$. Hence
$r_\gamma = e_\gamma+c_\gamma =\sqrt 2 + \frac{1}{\sqrt 2}$. 
\end{example}
\begin{example}
Let  $X\subset\R^N$ be a measurable set with $\L^N(\partial X)=0$.  We find an increasing sequence of polyhedral sets $X_n$ such that
 $X=\bigcup _{n\geq 1} X_n$ up to a negligible set. Let $Y\subset\R^N$ 
 be the unitary  cube,  $\L^N(X)=\L^N(Y)$ and let $Y_n\subset Y$ be a rectangle such that $\L^N(Y_n)=\L^N(X_n)$ $\forall n\in\N$.
 Let $\mu=\L^N\res X$, $\nu =\L^N\res Y$.\\
  We find a sequence $(u_n)_{n\in\N}$ so that $\forall n\in\N$ $u_n:X_n\rightarrow Y_n$ is a bi-Lipschitz map with $Ju_n=1$. 
  The volume constraint implies that $K_n:= \lip(u_n)\leq K$, $H_n:= \lip(u^{-1}_n)\leq H$.   
 In particular, for every $x,y\in X_n$ we have 
 $$ \frac{1}{H}|x-y|\leq |u_n(x)-u_n(y)|\leq K|x-y|.$$ 
 By Lipschitz extension, we may consider $u_n$ as defined on the whole $X$. By  Ascoli-Arzel\'{a} Theorem we find $u_n \rightarrow u$ uniformly. It follows that 
 $$ \frac{1}{H}|x-y|\leq |u(x)-u(y)|\leq K|x-y|,$$ up to a zero measure set. 
 Moreover 
 \begin{eqnarray*}
   \int_X f(u(x)) \ dx 
   &= & \lim_{n\rightarrow + \infty}\int _X  f(u_n(x))\ dx \\
 &=&
 \lim_{n\rightarrow + \infty}\left ( \int _{X_n}  f(u_n(x))\ dx +
 \int _{X\setminus X_n}  f(u_n(x))\ dx \right )\\
 & = &
 \lim_{n\rightarrow + \infty} \int _{Y_n}  f(y)\ dy= \int_Y f(y)\ dy .
 \end{eqnarray*} 
 Hence, $u_\# \mu =\nu$. 
\end{example}

\section{Variational problems on generalized reformations}

The notion of generalized reformation involves the Lipschitz pointwise constant of maps in a metric space framework. For the associated integral energies it is natural to consider some notion of Sobolev spaces in a metric setting. There exist different notions of  metric Sobolev spaces which coincide provided some  mild assumptions such  as a \emph{doubling condition}, a Poincarè inequality and a power of integrability $1<p<+\infty$ are satisfied. We refer the reader to the Appendix B and the references therein for further informations. In particular, the  requirement on the power $p>1$ will be important to state a general existence result (see Theorem \ref{existence-plan} below)  for the variational problem related to generalized reformations. 
Actually, these kind of assumptions seem to form a natural context
 to work with in the setting of metric analysis. Therefore, along all  this section we will assume 
\begin{equation}
\label{domain}
\begin{array}{l}
X=\overline{\Omega}\subset \R^N \hbox{ compact and satisfying \eqref{d-i} and \eqref{p-i}},\\
\\
Y\subset\R^N \hbox{ compact}.
\end{array}
\end{equation}

\begin{definition}
Let $\gamma\in\Pi(\mu,\nu)$. 
We define the reformation energy of $\gamma$ as follows
\begin{equation}
\label{generalized_energy}
{\mathscr R}(\gamma)=\int_X(c_\gamma+e_\gamma)\ d\mu.
\end{equation}
\end{definition}

\begin{remark}
With  abuse of notation we are  using the same symbol ${\mathscr R}$
to denote the reformation energy functional defined on the space of reformation maps and the analogous defined on the space of reformation plans. Since in the paper it always appear with its argument specified, there is no risk of confusion.
\end{remark}
\begin{theorem}
\label{iso-plan}
Let $\gamma =f(x)\otimes \mu \in\GRefo (\mu;\nu)^{H,K}$ be such that  $\mathscr R(\gamma) =2$, $\mu$ absolutely continuous with respect to the Lebesgue measure. Then there exists an open dense subset of $\Omega$ on which  the disintegration map $f$ is a local isometry (with respect to the Wasserstein distance).
\end{theorem}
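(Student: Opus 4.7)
The plan is to first reduce the hypothesis to a pointwise a.e.\ infinitesimal isometry condition, and then to upgrade it to a local isometry on an open dense subset by combining the Lipschitz regularity of $f$, a Fubini argument and local invertibility.

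First I would note that the Wasserstein analog of Lemma~\ref{maggio} gives $c_\gamma(x) \geq 1/e_\gamma(x)$ at every $x$ (the proof of Lemma~\ref{maggio} is purely metric, so replacing $|\cdot|$ by $W$ is harmless). Hence $r_\gamma(x) \geq 2$ pointwise, and together with $\mathscr{R}(\gamma) = 2$ and $\mu \ll \Ln$ this forces $r_\gamma(x) = 2$ for Lebesgue-a.e.\ $x \in \Omega$, which in turn means $e_\gamma(x) = c_\gamma(x) = 1$ a.e. Following the computation in Lemma~\ref{iso}, this equality is equivalent to the infinitesimal isometry
\[
(1-\eps)|x-x_0| \leq W(f(x),f(x_0)) \leq (1+\eps)|x-x_0|
\]
holding, for every $\eps>0$, for all $x \in B(x_0, r_\eps) \cap \Omega$, and for a.e.\ $x_0 \in \Omega$. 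I would then exploit the $\GRefo(\mu;\nu)^{H,K}$ constraint combined with the $1$-convexity of balls in $(\P(Y), W)$ (Lemma~\ref{convex}) and with Lemma~\ref{lip-lemma} to conclude that $f$ is locally bi-Lipschitz on $\Omega$; in particular $f$ is continuous, and the function $(x,y) \mapsto W(f(x),f(y)) - |x-y|$ is continuous on $\Omega \times \Omega$.

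To produce the open dense subset, set, for each $n \in \N$,
\[
V_n := \{ x_0 \in \Omega : W(f(x),f(y)) = |x-y|\ \forall\, x,y \in \overline{B(x_0,1/n)} \cap \Omega \},
\]
which is closed by continuity of $f$, and let $U := \bigcup_n \mathrm{int}(V_n)$. At every $x_0 \in U$ the map $f$ is an isometry on a neighborhood, so it remains to show that $U$ is dense in $\Omega$. Given an arbitrary open ball $B_0 \subset \Omega$, by Fubini almost every line segment $[x,y] \subset B_0$ has $e_\gamma = \lip(f) = 1$ a.e.\ along it; since $f$ is Lipschitz, its restriction to such a segment is absolutely continuous with metric speed equal to $\lip(f)$ a.e., so its image has length $|x-y|$ in $(\P(Y), W)$, giving $W(f(x),f(y)) \leq |x-y|$. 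For the opposite inequality, take a smaller ball $B_1 \subset B_0$ on which $HK<2$ holds, so that Theorem~\ref{metric-inversion} makes $f|_{B_1}$ locally invertible; by Lemma~\ref{reform} the inverse $f^{-1}$ satisfies $\lip(f^{-1}) = c_\gamma \circ f^{-1} = 1$ a.e.\ on $f(B_1)$, and the same Fubini/length argument gives $|x-y| \leq W(f(x),f(y))$ for a.e.\ pair $(x,y) \in B_1 \times B_1$. Combining both bounds yields equality a.e.\ on $B_1 \times B_1$; by continuity of $(x,y) \mapsto W(f(x),f(y)) - |x-y|$ it extends to every pair, so $B_1 \subset V_n$ for $n$ large enough, whence $B_1 \subset \mathrm{int}(V_n) \subset U$.

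The main difficulty is the lower bound $|x-y| \leq W(f(x),f(y))$. The pointwise condition $c_\gamma = 1$ only controls a liminf as $x \to x_0$ and does not propagate directly to a finite-scale bound, so one is essentially forced to work through the inverse map $f^{-1}$. Local invertibility of $f$ in the purely metric setting is available only on open subsets where $HK$ is sufficiently small in the sense of Theorem~\ref{metric-inversion}, and it is precisely the openness (rather than the density) of this region that limits the conclusion to an open dense subset rather than to all of $\Omega$.
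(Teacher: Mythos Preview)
Your reduction to $e_\gamma=c_\gamma=1$ a.e.\ and the Fubini/length argument for the upper bound $W(f(x),f(y))\le |x-y|$ are correct and match the paper in spirit. The gaps lie in how you obtain local invertibility and how you get the lower bound.

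First, Theorem~\ref{metric-inversion} is stated for reformation maps $u\in\Refo(\mu;\nu)^{H,K}$ between subsets of $\R^N$; its proof goes through Lemma~\ref{discreta} and Theorem~\ref{open-discrete}, i.e.\ through topological degree in $\R^N$, and it requires $u$ to be open. None of this is available for the disintegration map $f:\Omega\to(\P(Y),W)$, and moreover the hypotheses of Theorem~\ref{iso-plan} do not include $HK<2$, so you cannot simply ``take a smaller ball on which $HK<2$ holds''. For the same reason your earlier claim that the $\GRefo$ constraint plus Lemma~\ref{convex} and Lemma~\ref{lip-lemma} make $f$ locally bi-Lipschitz on all of $\Omega$ is unjustified: the bound $c_\gamma\le H$ only controls a $\liminf$ centered at each point, and Lemma~\ref{lip-lemma} applied to $f^{-1}$ would require $f^{-1}$ to be already defined on a convex subset of $\P(Y)$, which you have not established. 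The paper bypasses all of this by invoking Lytchak's open map theorem \cite[Prop.~1.1, Sec.~3]{open-metric}, which directly yields an open dense subset of $\Omega$ on which $f$ is locally bi-Lipschitz from the sole condition $c_f\le H$.

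Second, the ``same Fubini/length argument'' applied to $f^{-1}$ does not go through. The map $f^{-1}$ is defined on $f(B_1)\subset\P(Y)$, where there is no Fubini over line segments; and if you try to pull back to $\Omega$ and use images $f([x,y])$ of Euclidean segments, these curves have length $|x-y|$ (by the upper bound you already proved), not $W(f(x),f(y))$, so you only recover the trivial inequality $|x-y|\le |x-y|$. What is needed is a curve in $\P(Y)$ of length close to $W(f(x),f(y))$ along which $\lip(f^{-1})=1$ a.e. The paper obtains this by taking an actual Wasserstein geodesic $\rho_t$ between $f(x_1)$ and $f(x_2)$, lifting it through the bi-Lipschitz inverse to a curve $\gamma$ in $B$, approximating $\gamma$ (again via Fubini in $\Omega$) by injective curves $\gamma_n$ along which the a.e.\ condition $\lip(f^{-1})(f(\gamma_n(t)))=1$ holds, and then passing to the limit using the upper semicontinuity of $\H^1$ under uniform convergence. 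This geodesic--lifting--approximation step is the missing idea in your lower bound.
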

\begin{proof}
First observe that since $\Omega $ is quasiconvex (see for instance \cite[Lemma 6.1]{metric-sobolev}), then $f$ is a Lipschitz function. 
We have $e_\gamma= c_\gamma =1$ a.e. By \cite[Prop. 1.1, Sec. 3]{open-metric}, there exists  an open dense subset $U\subset \Omega $ on which $f$ is locally bi-Lipschitz. Therefore,  consider  a bi-Lipschitz map $f:B\rightarrow \P(Y)$ for an open ball $B\subset U$. For $x_1,x_2 \in B$, by using Fubini Theorem, we find a curve $\eta $ connecting $x_1,x_2$ as in \cite[Prop. 3.4]{origami} in such a way for a.e. $t$ it results $e_\gamma(\eta (t))=1$
and $l(\eta)\leq |x_1-x_2|+\eps$.   Since $f$ is Lipschitz, the curve 
$\rho :[0,1]\rightarrow \left (\P(Y), W \right )$, defined by $\rho_t=f(\eta(t))$ is Lipschitz too. Hence, it admits a tangent vector $v$ (see Theorem \ref{tcont}).
Fixed $u\in \lip_1(Y)$, by standard approximation argument we may suppose that $u\in \C^1$. Therefore, by using the continuity equation \eqref{eqcont} we compute
$$ \int_Y u \ d\left (f(x_1)-f(x_2) \right ) = \int _Y u \ d(\rho_1-\rho_0)= \int_0^1 \frac{d}{dt} \left ( \int_Y u\ d\rho_t \right ) dt =$$
$$= \int_0^1 \int_Y \langle du,v\rangle d\rho_t\ dt \leq \int _0^1 \int_Y|v|d\rho_t \ dt = \int_0^1 |\dot \rho |(t)\ dt \leq \int_0^1 e_\gamma(\eta _t)|\dot \eta | dt \leq $$ $$\leq l(\eta ) \leq |x_1-x_2|+\eps .$$ Taking the supremum with respect to $u$ and letting $\eps \rightarrow 0^+$ we get the $1$-Lipschitz property
$$W(f(x_1),f(x_2))\leq |x_1-x_2|.$$ 
To get the opposite inequality, we argue as follows.
Set $\rho_0=f(x_1), \rho _1=f(x_2)$, 
let us consider a geodesic $\rho _t :[0,1]\rightarrow \P(Y)$ between $\rho _0$ and $\rho _1$, i.e. $l(\rho)=W(f(x_1),f(x_2))$. 
Since $f$ is bi-Lipschitz, there exists an injective Lipschitz curve 
$\gamma :[0,1]\rightarrow B$ connecting $x_1, x_2$ such that $\rho_t=f(\gamma(t))$. 
Again by using a Fubini type argument, we find a sequence of Lipschitz injective curves  $(\gamma_n)_{n\in \N}$ so that  $\gamma_n \rightarrow \gamma$ uniformly and  $\lip(f^{-1})(f(\gamma_n (t)))=1 $ for a.e. $t\in [0,1]$. Therefore, we get $\sigma _n =f(\gamma_n) \rightarrow \rho $ uniformly in $( \P(Y), W)$. By the upper semicontinuity of the Hausdorff measure along the sequence $\sigma _n$ (see for instance  \cite[Lemma 4.1]{urban}), recalling that for injective curves it results $l(\sigma)=\H^1(\sigma ([0,1]))$ (see \cite{a-t}),  fixed $\eps >0$,  we find a  Lipschitz curve $\sigma $ connecting $\rho_0$ and $\rho _1$ such that $\lip(f^{-1})(\sigma (t))=1 $ for a.e. $t\in [0,1]$ and $l(\sigma)\leq W(f(x_1),f(x_2))+\eps$. Finally, we compute
$$|x_1-x_2|=|f^{-1}(\sigma(0))-f^{-1}(\sigma (1))|=\left |\int_0^1\frac{d}{dt}f^{-1}(\sigma(t)) \ dt \right |\leq$$ $$\leq \int_0^1 |\dot \sigma |_W(t) \ dt =l(\sigma) \leq  W(f(x_1),f(x_2))+\eps .$$
Letting $\eps \rightarrow 0^+$ we get the thesis.
\end{proof}
Theorem \ref{iso-plan} should be compared with Theorem 3.17. The main restriction is on invertibility which is just on an open dense subset. We may say that this open set is of full measure, actually coinciding with the whole space, just for the case of small reformations as done in Theorem \ref{min2} below. There are different restrictions in doing so for the general case. A first matter relies in characterizing the set where a map is locally invertible on a metric setting. A second one relies on the fact that the integral functional $  \mathscr R$ gives a.e. informations, while invertibility requires global conditions. Therefore the matter is on passing from a.e. conditions to everywhere ones. In the results concerning reformation maps, this difficulty was overcome by using degree theory in $\R^N$. Therefore, something similar to degree theory over metric spaces should be needed in order to handle with this kind of questions.

Let us introduce the notation
\begin{equation}
\mathcal E_G(\mu,\nu)=\inf\{ \mathscr R(\gamma )\:\vert\:\gamma\in \GRefo_0 (\mu;\nu)^{H,K}\}.
\end{equation}
Concerning symmetry properties of the above generalized reformation energy, the same reasonings made for transport maps,  compare with  Definition \ref{elastic}, hold as well. We remark here that this time the question of symmetry is not just a question on invertibility. For instance, the transport plan $\gamma =f\otimes \mu$ between $\mu$ and $\nu$,  considered in Figure $4.1$ is isometric, i.e. $W(f(x),f(x_0))=|x-x_0|$. However, reversing the target measures we see that the transport plan between $\nu$ and $\mu$ is just locally isometric and no transport plan $g\otimes \nu$ between $\nu$ and $\mu$ is isometric. The fact is that the corresponding disintegration maps are of the form 

$$g:Y\rightarrow \P(X).$$
Therefore, symmetry questions are quite involved and here we do not further  consider them.

We state the following characterization of the lowest possible value of the generalized reformation energy. 

\begin{theorem}
\label{min2}If 
$\mathcal E_G(\mu,\nu)=2$, with $\mu=\L^N\res \Omega $,  
then the infimum is attained at  a local isometric reformation plan. \end{theorem}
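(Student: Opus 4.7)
My plan is to run the direct method of the calculus of variations, paralleling Theorem \ref{min1} but adapted to the Wasserstein-valued setting of disintegration maps. First I would pick a minimizing sequence $\gamma_n=f_n(x)\otimes\mu\in\GRefo_0(\mu;\nu)^{H,K}$ with $\mathscr R(\gamma_n)\to 2$. The pointwise bounds $e_{f_n}\le K$ and $c_{f_n}\le H$, read on the target metric space $(\P(Y),W)$ (whose balls are quasi-convex by Lemma \ref{convex}), give via Lemma \ref{lip-lemma} that $f_n$ is locally $K$-Lipschitz, and via the Wasserstein analogue of Lemma \ref{reform}---combined with the invertibility built into the definition of $\GRefo_0$---that the inverses $f_n^{-1}$ are locally $H$-Lipschitz. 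Thus $(f_n)$ and $(f_n^{-1})$ form locally equi-Lipschitz families with values in the complete separable metric space $(\P(Y),W)$.

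Next, since $Y$ is compact the space $(\P(Y),W)$ is itself compact, so I would apply a metric-valued Ascoli--Arzel\`a theorem to extract a subsequence with $f_n\to f$ locally uniformly. Passing the two-sided estimate
\[
\tfrac{1}{H(x_0)}|x-x_0|\le W(f_n(x),f_n(x_0))\le K(x_0)|x-x_0|
\]
to the limit shows that $f$ satisfies the same bi-Lipschitz bound, hence is locally injective with Lipschitz inverse, and the marginal identities survive under weak-$*$ limits of the plans, so $\gamma=f(x)\otimes\mu$ belongs to $\GRefo_0(\mu;\nu)^{H,K}$.

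Third, I would prove a metric-valued analogue of Lemma \ref{lsc}: for each fixed $\sigma\in\P(Y)$ the scalar function $x\mapsto W(f_n(x),\sigma)$ is equi-Lipschitz with $\|\nabla W(f_n(\cdot),\sigma)\|\le \lip(f_n)(x)$, and the countable-dense-set argument of Lemma \ref{diff-lip} (using separability of $(\P(Y),W)$) yields $\int e_\gamma\,d\mu\le\liminf_n\int e_{\gamma_n}\,d\mu$. The same reasoning applied to the local inverses $f_n^{-1}$, after pushing the integral forward via $(f_n)_\#\mu=\nu$ exactly as in Theorem \ref{min1}, gives the analogous semicontinuity of the contraction term. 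Summing and invoking the universal bound $\mathscr R(\gamma)\ge 2$ from Lemma \ref{maggio} gives
\[
2\le \mathscr R(\gamma)\le \liminf_n \mathscr R(\gamma_n)=2,
\]
so $\gamma$ is a minimizer. An application of Theorem \ref{iso-plan} then delivers the asserted local-isometry conclusion.

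The hardest point, I expect, will be the stability of invertibility of the disintegration maps under the uniform limit in a purely metric context, where the degree-theoretic tools of Lemma \ref{injectivity} are unavailable. This difficulty is circumvented by the quantitative bi-Lipschitz estimates built into $\GRefo_0(\mu;\nu)^{H,K}$: John-type inversion bounds (as used in Theorem \ref{metric-inversion}) guarantee that the $f_n^{-1}$ are simultaneously defined on a common neighborhood, and their equi-Lipschitz convergence identifies the limit with $f^{-1}$. A secondary nuisance is the lower semicontinuity of the contraction term, which requires transporting the integral to the target and hence uses the push-forward condition in a way that must be stable along the minimizing sequence.
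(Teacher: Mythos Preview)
Your compactness and Ascoli--Arzel\`a steps, as well as the invertibility-stability argument, match the paper's proof closely. The lower semicontinuity of the expansion term via Lemma~\ref{lsc} (with target $(\P(Y),W)$) is also fine. The genuine gap is in your treatment of the contraction term.

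You write ``after pushing the integral forward via $(f_n)_\#\mu=\nu$ exactly as in Theorem~\ref{min1}''. This is a type error: $f_n$ maps $X$ into $\P(Y)$, so $(f_n)_\#\mu$ is a probability measure on $\P(Y)$, not on $Y$, and it is certainly not $\nu$. More importantly, even with the correct push-forward, the measures $(f_n)_\#\mu$ on $\P(Y)$ \emph{vary with $n$}. In Theorem~\ref{min1} the key identity $\int_X c_{u_n}\,d\mu=\int_Y \lip(u_n^{-1})\,d\nu$ works because all $u_n$ push $\mu$ to the same $\nu$; here there is no common target measure against which to run the lower-semicontinuity argument for $\lip(f_n^{-1})$. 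The paper makes this obstruction explicit: the direct-method approach to the contraction term requires fixing the \emph{transport class} $[\eta]$ (so that $(f_n)_\#\mu$ is constant in $n$), and this is precisely the additional constraint imposed in Theorem~\ref{existence-plan}. Without it, your l.s.c.\ inequality for $\int c_\gamma\,d\mu$ is unjustified.

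The paper's proof of Theorem~\ref{min2} sidesteps this by exploiting the specific value $2$ of the infimum rather than proving general lower semicontinuity. From $2\le\int(e_{\gamma_n}+1/e_{\gamma_n})\,d\mu\le\mathscr R(\gamma_n)\to 2$ one extracts (after passing to a subsequence) $e_{\gamma_n}\to 1$ a.e., and then Fatou together with $c_{\gamma_n}\ge 1/e_{\gamma_n}$ forces $c_{\gamma_n}\to 1$ a.e.\ as well. With this pointwise information in hand, the paper argues directly---via the curve construction of Theorem~\ref{iso-plan} applied along the sequence---that the limit disintegration map $f$ satisfies $W(f(x_1),f(x_2))=|x_1-x_2|$ locally, hence $\mathscr R(\gamma)=2$. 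So the paper never establishes l.s.c.\ of the contraction energy here; it proves the isometry of the limit \emph{from the a.e.\ convergence of $e_{\gamma_n},c_{\gamma_n}$ to $1$}. Your proposed route would work only under the extra transport-class constraint, which is not part of the hypothesis of this theorem.
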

\begin{proof}
Since $\mu=\L^N\res \Omega $, we may assume $X= \Omega$. Let $\gamma _n$ be a minimizing sequence. By compactness of $\P(X\times Y)$,
 by passing to a subsequence, we may assume that $\gamma _n \stackrel{*}{\rightharpoonup}\gamma$. It follows that $\gamma$ is also a transport plan between $\mu$ and $\nu$. By disintegration, we also assume that
$\gamma _n =  f_n(x)\otimes \mu$, $\gamma =  \nu_x\otimes \mu$. For any fixed  $\varphi\in \C(X)$, $\psi \in \C(Y)$, we get
$$ \int _X \varphi(x)\left ( \int _Y \psi(y) d \nu_x \right ) d  \mu = \int _{X\times Y} \varphi(x)\psi(y)\ d\gamma = \lim_{n\rightarrow + \infty}  \int _{X\times Y} \varphi(x)\psi(y)\ d\gamma _n =$$
\begin{equation}\label{weak}= \lim_{n\rightarrow + \infty}\int _X \varphi(x)\left ( \int _Y \psi(y) df_n(x) \right ) d \mu .\end{equation} By density of continuous functions, it follows that 
$ \int _Y \psi(y) d f_n(x)  \rightharpoonup \int _Y \psi(y) d\nu_x$ in Lebesgue spaces of integrable functions.

Since $X $ is quasiconvex,
by definition of generalized reformations, it follows that the sequence $f_n$ is equi-Lipschitz on $X $.
By  Ascoli-Arzel\'{a} Theorem, by passing 
to a subsequence we have that $f_n \rightarrow f$ uniformly on compact subsets. 
Since the disintegration is uniquely determined, 
it follows that  $f(x)=\nu _x$ for $\mu$-a.e. $x$. 
Indeed, since the Wasserstein distance metrizes the weak$^*$ convergence of measures ($Y$ is compact), for every $\psi \in \C(Y)$ we have
$$\int_Y \psi df_n(x) \rightarrow \int_Y \psi df(x).$$ Hence, for every $\varphi \in \C(X)$, passing to the limit under the integral sign and by \eqref{weak} we get
$$ \int_X \varphi (x)\left ( \int_Y\psi df(x)\right ) d\mu = $$ $$\lim_{n\rightarrow + \infty} \int_X\varphi (x) \left ( \int_Y \psi df_n(x)\right ) d\mu = 
\int_X \varphi (x)\left ( \int_Y\psi d\nu_x\right ) d\mu.$$
By lemma \ref{convex},  $f_n, f_n^{-1}$ are both locally equi-Lipschitz. 
It follows that also $f$ is invertible. Indeed, if $y_0=f(x_1)=f(x_2)$, as in the proof of Theorem \ref{min1}, the inverse maps $f_n^{-1}$ are well defined on a small ball $B(y_0, \delta)$.  For a common Lipschitz constant $H$ we compute 
$$ |x_1-x_2|=|f_n^{-1}(f_n(x_1))-f_n^{-1}(f_n(x_2))|\leq H W(f_n(x_1),f_n(x_2))\leq  $$ $$ H\left ( W(f_n(x_1),f(x_1))+W(f(x_1),f(x_2))+ W(f(x_2),f_n(x_2)) \right ).$$ Letting $n\rightarrow +\infty$ we get $x_1=x_2$.
Therefore,  $f\in \GRefo_0 (\mu;\nu)^{H,K}$.
Since
$$ 2\leq \int_X \left (e_{\gamma_n}+\frac{1}{e_{\gamma_n}}\right ) d\mu \leq \mathscr R(\gamma_n)\:\:\:\:\forall n\in\N,$$ 
passing to the limit we get
$$\lim_{n\rightarrow +\infty}\int_Xg_n(x) d\mu =2,$$
where $g_n(x)=e_{\gamma_n}+\frac{1}{e_{\gamma_n}}$. 
Passing to a subsequence we have $g_n\rightarrow 2$ a.e.
Since $g_n(x)=\varphi(e_{\gamma_n})$ for $\varphi(t)=t+\frac{1}{t}$, by continuity of $\varphi$ it follows that $e_{\gamma_n}\rightarrow 1$ a.e.
On the other hand, $c_{\gamma_n} \geq \frac{1}{e_{\gamma_n}}$ yielding $\liminf_{n\rightarrow + \infty} c_{\gamma_n}\geq 1$ a.e.
Since  $\gamma_n$ is a minimizing  sequence for $\mathscr R$,
we get
$$ 2=\lim_{n\rightarrow + \infty} \mathscr R(\gamma_n)= 1+ \lim_{n\rightarrow +\infty} \int_X c_{\gamma_n} d\mu.$$
and by Fatou Lemma we infer
$$ 1\leq \int_X \liminf_{n\rightarrow + \infty} c_{\gamma_n}d \mu \leq \lim_{n\rightarrow + \infty} \int_Xc_{\gamma_n} = 1. $$ Therefore, by passing to a subsequence, we also have that $c_{\gamma_n}\rightarrow 1$ a.e.
  Arguing as in the proof of 
Theorem~\ref{iso-plan},  we locally find in $\Omega$  a curve 
$\eta:[0,1]\rightarrow \P(Y) $ such that $e_{\gamma_n}(\eta(t))\rightarrow 1$ a.e. and $l(\eta)\leq |x_1-x_2|+\eps$. Therefore we get
$$W(f_n(x_1),f_n(x_2))\leq \int_0^1e_{\gamma_n}(\eta(t))|\dot \eta |(t) dt.$$
Passing to the limit we obtain
$$W(f(x_1),f(x_2))\leq l(\eta)\leq |x_1-x_2|+\eps.$$ Letting $\eps \rightarrow 0^+$ we obtain the $1$-Lipschitz condition
$$W(f(x_1),f(x_2))\leq|x_1-x_2|.$$
Arguing again as in the proof of 
Theorem~\ref{iso-plan},   we (locally) obtain  $$W(f(x_1),f(x_2))= |x_1-x_2|,$$ hence   $\mathscr R(\gamma)=2$. 
\end{proof}
\begin{remark}
To recover a global isometry in the above results as in Theorem \ref{main} one should establishes some metric version of Liouville Rigidity Theorems as in Theorem~\ref{rigidity}.
\end{remark}

A natural question concerns the validity of an existence result as in Theorem \ref{min1}. However, we observe that the approach pursued in the proof of such result involve the push-forward of the transport map. Therefore, for generalized reformations, the push-forward of the disintegrations maps is involved. This point of view leads to consider a variational problem over transport classes as introduced in \cite{disintegrations}. The definition of transport classes is the following 
\begin{definition}
Let $\gamma, \eta\in \Pi(\mu,\nu)$ with $\gamma= f(x)\otimes \mu$, $\eta= g(x)\otimes \mu $ be given. We shall say that $\gamma$ and $\eta$ are equivalent (by disintegration), in symbols $\gamma\approx\eta$, if $f_\#\mu= g_\#\mu$.\\ For any $\eta\in \Pi(\mu,\nu)$ with $\eta= g(x)\otimes \mu $,  we shall call transport class any  equivalence class of a transport plan $\eta$  and it  will be denoted by $[\eta]$, i.e.
\begin{equation}
\label{equiv-class}
[\eta]=\{\gamma= f(x)\otimes \mu\;\vert\;f_\#\mu= g_\#\mu\}.
\end{equation}
\end{definition}
For a transport map $u$ the disintegration map is given by $x\mapsto\delta_{u(x)}$. In \cite{disintegrations} it is shown that every such disintegration map leads to the same push-forwarded measure. In other words, all the reformation plans of the form $(I\times u)_\#\mu$ belong to the same transport class. Moreover, the following result holds true
\begin{prop}
Let $u:X\rightarrow Y$ be such that $u_\#\mu=\nu$ and let 
$\eta =(I\times u)_\# \mu= \delta _{u(x)}\otimes \mu$.
If $\gamma\in[\eta]$ then there exists $v:X\rightarrow Y$ such that
$\gamma=\delta _{v(x)}\otimes \mu$, i.e. the transport plan $\gamma$ is concentrated on  the graph of $v$.
\end{prop}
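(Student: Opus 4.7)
The plan is to show that $f(x)$ must itself be a Dirac mass for $\mu$-a.e. $x$, and then read off $v$ as the location of that Dirac mass.

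First I would identify the push-forward measure $g_\#\mu$, where $g(x)=\delta_{u(x)}$, and observe that its support is contained in the subset $D(Y):=\{\delta_y:y\in Y\}\subset \P(Y)$ of Dirac masses. Indeed $g_\#\mu(D(Y))=\mu(g^{-1}(D(Y)))=\mu(X)=1$. Since by hypothesis $\gamma\in[\eta]$, one has $f_\#\mu=g_\#\mu$, and therefore
\begin{equation*}
\mu\bigl(\{x\in X: f(x)\in D(Y)\}\bigr)=f_\#\mu(D(Y))=g_\#\mu(D(Y))=1.
\end{equation*}
Thus for $\mu$-a.e.\ $x\in X$ there exists a unique $v(x)\in Y$ with $f(x)=\delta_{v(x)}$; extend $v$ to all of $X$ by fixing an arbitrary value outside the full-measure set.

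Next I would verify that $v$ is Borel. The point-measure embedding $\iota: Y\to (\P(Y),W)$ defined by $\iota(y)=\delta_y$ is an isometry onto $D(Y)$, since $W(\delta_y,\delta_{y'})=|y-y'|$; in particular $D(Y)$ is closed in $(\P(Y),W)$ and $\iota^{-1}:D(Y)\to Y$ is continuous. On the full-measure set one may then write $v=\iota^{-1}\circ f$, which is Borel as a composition of Borel maps, so $v:X\to Y$ is a measurable map.

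Finally I would conclude by the uniqueness part of the Disintegration Theorem~\ref{disintegration}: since $f(x)=\delta_{v(x)}$ for $\mu$-a.e.\ $x$, we have
\begin{equation*}
\gamma = f(x)\otimes\mu = \delta_{v(x)}\otimes\mu = (I\times v)_\#\mu,
\end{equation*}
so $\gamma$ is concentrated on the graph of $v$. The marginal condition $(\pi_2)_\#\gamma=\nu$ then automatically yields $v_\#\mu=\nu$. The only nontrivial step is the measurability of $v$, but this is routine thanks to the isometric character of the Dirac embedding $\iota$; the whole argument is essentially a measure-theoretic bookkeeping of what it means for $f_\#\mu$ to live on the Dirac stratum of $\P(Y)$.
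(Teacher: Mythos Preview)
Your argument is correct. The key observation---that $g_\#\mu$ is concentrated on the set $D(Y)$ of Dirac masses, hence so is $f_\#\mu$, forcing $f(x)$ to be a Dirac $\mu$-a.e.---is exactly the right one, and your handling of measurability via the isometric embedding $\iota:y\mapsto\delta_y$ is clean.

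One point of comparison: the paper does not actually supply a proof of this proposition in the text. It is stated immediately after the sentence ``In \cite{disintegrations} it is shown that every such disintegration map leads to the same push-forwarded measure \ldots\ Moreover, the following result holds true,'' so the result is attributed to the companion paper \cite{disintegrations} (Granieri--Maddalena, \emph{Monge--Kantorovich Transport Problems and Disintegration Maps}). Your write-up therefore fills in what the present paper leaves to an external reference, and does so by what is presumably the same natural route: the whole content of the statement is that the Dirac stratum $D(Y)\subset\P(Y)$ is a Borel (in fact closed, since $Y$ is compact) set of full $g_\#\mu$-measure, and membership in $[\eta]$ transfers this to $f_\#\mu$.
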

In this perspective, fixed $v:X\rightarrow Y$  such that $v_\#\mu=\nu$,  the variational problem \eqref{p1} studied in Section $3$ could be rephrased as follows \begin{equation}
\min \{ \mathscr{R} (u) \ | \ u\in \Refo(\mu;\nu)^{H,K}\} =\min_{\GRefo(\mu;\nu)^{H,K}} \{ \mathscr {R}(\gamma )\:\vert  \: \gamma \in[ (I\times v)_\# \mu ] \}.
\end{equation}
However, by passing to transport plans, different transport classes arise. 
By the above discussion it seems  natural to fix a transport plan 
$\eta\in\Pi(\mu,\nu)$, 
$\eta = g(x)\otimes \mu$  and 
to consider the variational problem
\begin{equation}
\label{min_generalized}
 \min_{\GRefo(\mu;\nu)^{H,K}} \left \{ \mathscr {R}(\gamma)  \:  \vert\: \gamma \in [\eta ]\right  \}.
\end{equation}


\begin{theorem}(Existence of optimal reformation  plans)
\label{existence-plan}
Assume \eqref{domain} and $\mu =\L^N\res \Omega$. Let  $\eta \in  \GRefo_0 (\mu;\nu)^{H,K}$ be given. 
Then, for every $p>1$ the variational problem 
\begin{equation}
\label{mingen}\min_{\GRefo_0(\mu;\nu)^{H,K}} \left \{ \mathscr {R}^p(\gamma):=\int_X(c_\gamma^p+e_\gamma^p)d\mu   \:  \vert\: \gamma \in [\eta ]\right  \}\end{equation}

admits solutions.
\end{theorem}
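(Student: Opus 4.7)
The plan is to apply the direct method of the calculus of variations, adapting the scheme used in Theorem~\ref{min2} to the functional $\mathscr{R}^p$ and to the additional transport-class constraint $\gamma\in[\eta]$. Concretely, I would first select a minimizing sequence $\gamma_n=f_n(x)\otimes\mu$ in $\GRefo_0(\mu;\nu)^{H,K}\cap[\eta]$. The pointwise bounds $e_{\gamma_n}(x)\leq K(x_0)$ and $c_{\gamma_n}(x)\leq H(x_0)$ on a neighborhood of each $x_0\in X$, combined with the quasi-convexity of $\Omega$ guaranteed by the Poincar\'e assumption \eqref{p-i} and with Lemma~\ref{lip-lemma}, yield local equi-Lipschitz bounds both for the disintegration maps $f_n:X\to(\P(Y),W)$ and for their inverses $f_n^{-1}$. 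Since $(\P(Y),W)$ is compact (as $Y$ is compact), an Ascoli-Arzel\`a argument extracts a subsequence $f_n\to f$ uniformly on $X$, with $f$ locally Lipschitz.

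Next I would verify admissibility of the candidate $\gamma:=f(x)\otimes\mu$ in close analogy with the proof of Theorem~\ref{min2}: the uniqueness of the disintegration together with the weak$^*$ convergence $\gamma_n\stackrel{*}{\rightharpoonup}\gamma$ in $\P(X\times Y)$ identifies $\gamma$ as a transport plan between $\mu$ and $\nu$; the invertibility of $f$ follows from the local equi-Lipschitz bound on the inverses $f_n^{-1}$ via the contradiction argument used in Theorem~\ref{min1}; and the local bounds $e_\gamma\leq K$, $c_\gamma\leq H$ pass to the limit under uniform convergence. To preserve the transport class, I would note that $(f_n)_\#\mu=g_\#\mu$ for every $n$ (where $\eta=g(x)\otimes\mu$) and use continuity of push-forward under uniform convergence in $W$ together with density of $\C_b(\P(Y),W)$ in the weak$^*$ duality to conclude $f_\#\mu=g_\#\mu$, so that $\gamma\in[\eta]$.

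The decisive step is the lower semicontinuity inequality $\mathscr{R}^p(\gamma)\leq\liminf_n\mathscr{R}^p(\gamma_n)$. The expansion contribution $\int_X e_\gamma^p\,d\mu$ is handled directly by the Remark following Lemma~\ref{lsc}, applied with exponent $p\geq 1$ to the Lipschitz maps $f_n$ into the separable metric target $(\P(Y),W)$. For the contraction contribution $\int_X c_\gamma^p\,d\mu$ I would exploit the invertibility of the limit $f$ and the identity $c_\gamma(x)=e_{\gamma^{-1}}(f(x))$, i.e.\ the natural Wasserstein analogue of Lemma~\ref{reform}, in order to push the integral onto the target: on a Vitali covering by neighborhoods over which all inverses $f_n^{-1}$ are simultaneously defined and equi-Lipschitz, the change of variable $y=f_n(x)$ turns the integral of $c_{\gamma_n}^p$ against $\mu$ into an integral of $\lip(f_n^{-1})^p$ against the fixed measure $(f_n)_\#\mu=g_\#\mu$, and a second application of Lemma~\ref{lsc} (now to the uniformly convergent sequence $f_n^{-1}$ on common invertibility neighborhoods) delivers the required inequality. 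Letting the Vitali remainder shrink, using the $L^1$-integrability of $H,K$, closes the estimate and produces the minimizer. The hypothesis $p>1$ enters precisely at this stage: it allows access to the reflexive metric Sobolev framework recalled in Appendix~B, so that the Dirichlet-type energies are lower semicontinuous along weakly convergent sequences, a feature that compensates for the loss of pointwise control intrinsic to the contraction energy.

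The main obstacle, as outlined above, is the lower semicontinuity of the contraction term: unlike $e_\gamma$, the quantity $c_\gamma$ is a \emph{reverse} pointwise Lipschitz constant not directly amenable to the standard semicontinuity results under uniform convergence, and taming it requires \emph{both} the invertibility of the limit $f$ \emph{and} uniform local equi-Lipschitz bounds on the whole family $\{f_n^{-1}\}$ on a single covering. Once these ingredients are in place, the rest of the argument runs in close parallel to Theorem~\ref{min1} and Theorem~\ref{min2}, with the constraint $\gamma\in[\eta]$ preserved automatically by the continuity of push-forward, and the $p>1$ hypothesis supplying the functional-analytic compactness needed to complete the existence proof.
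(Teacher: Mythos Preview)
Your overall architecture is right and matches the paper: direct method, equi-Lipschitz bounds on $f_n$ from \eqref{k} and quasi-convexity, Ascoli--Arzel\`a to get $f_n\to f$ uniformly in $(\P(Y),W)$, identification of the limit disintegration, invertibility of $f$ via the argument from Theorem~\ref{min2}, and lower semicontinuity of the expansion term via Lemma~\ref{lsc}. You also correctly spot that the constraint $(f_n)_\#\mu=g_\#\mu$ gives a \emph{fixed} reference measure on the target, which is the crux of the contraction estimate.

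The gap is in your treatment of the contraction term. You propose ``a second application of Lemma~\ref{lsc}'' to the sequence $f_n^{-1}$ on a Vitali covering, mimicking Theorem~\ref{min1}. But Lemma~\ref{lsc} (and its proof via Rademacher and weak$^*$ compactness in $W^{1,\infty}_{loc}$) is stated for maps whose \emph{domain} is an open subset of $\R^N$; here the domain of $f_n^{-1}$ is $f(\Omega)\subset(\P(Y),W)$, a genuinely infinite-dimensional metric space, so the lemma simply does not apply. In Theorem~\ref{min1} this issue did not arise because $u_n^{-1}$ still had Euclidean domain $u(\Omega)\subset\R^N$.

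The paper's proof avoids this by dispensing with the Vitali/remainder argument altogether. After writing $\int_X c_\gamma^p\,d\mu=\int_{\P(Y)}\lip^p(f^{-1})\,d(f_\#\mu)$ via Lemma~\ref{reform}, it observes that doubling and Poincar\'e \eqref{d-i}--\eqref{p-i} transfer from $(X,\mu)$ to $(f(X),W,f_\#\mu)$ through the bi-Lipschitz map $f$, and then invokes Ohta's results \cite{sob-cheeger} directly: for $p>1$ the pointwise Lipschitz constant is the minimal generalized upper gradient, and the Cheeger $p$-energy \eqref{c-energy} is lower semicontinuous under $L^p$ convergence of $f_n^{-1}\to f^{-1}$. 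This yields $\int_{\P(Y)}\lip^p(f^{-1})\,d(f_\#\mu)\leq\liminf_n\int_{\P(Y)}\lip^p(f_n^{-1})\,d(f_\#\mu)$, and \emph{only then} is the class constraint $(f_n)_\#\mu=f_\#\mu$ used to rewrite the right-hand side as $\liminf_n\int_X c_{\gamma_n}^p\,d\mu$. So the hypothesis $p>1$ is not a reflexivity bonus layered on top of Lemma~\ref{lsc}; it is what makes the metric lower semicontinuity available in the first place, replacing the Euclidean Lemma~\ref{lsc} which is unavailable for $f_n^{-1}$.
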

\begin{proof}
Let $\gamma _n=f_n(x)\otimes \mu$ be a minimizing sequence. Let $f_n \rightarrow f$ uniformly with respect to the Wasserstein distance as in the proof of Theorem \ref{min2}. 
By Lemma \ref{lsc} we get the lower semicontinuity  of the term $\int _X e_\gamma^p (x) d\mu $. Moreover, by Lemma \ref{reform} 
we get
\begin{equation}
\label{cp}
\int_X c_\gamma^p (x) d\mu = \int _ X \lip^p(f^{-1})(f(x))\ d\mu
\end{equation}
  Since \eqref{domain} $X$  satisfies  the  doubling condition given in Definition~\ref{doubling}   and the  Poincar\'{e} inequality given in Definition~\ref{poincare},  we can  apply 
 the theory of Sobolev spaces  over the subset $f(X)$ of the metric space  $(\P(Y), W, f_\# \mu)$
 (see Appendix B).  Moreover  (see \cite{sob-cheeger}), since for $p>1$ the pointwise Lipschitz constant $\lip(g)$ is the minimal generalized  upper gradient of the locally Lipschitz map $g$ (\cite[Theorem 5.9]{sob-cheeger}) and the Cheeger $p$-energy \eqref{c-energy} is lower semicontinuous  with respect to $L^p$ convergence (\cite[Theorem 2.8]{sob-cheeger}),  by using \eqref{cp} we have 
\begin{equation}
 \int_X c_\gamma^p (x) d\mu  =  \int _ {\P(Y)} \lip^p(f^{-1})(y)\ d(f_\# \mu )\leq 
  \liminf_{n \rightarrow + \infty} \int _ {\P(Y)}\lip^p(f_n^{-1})(y)\ d( f_\# \mu ).
 \end{equation} 
By taking into account the condition $(f_n)_\# \mu =f_\#\mu$ $\forall n\in \N$, we get
$$\int_X c_\gamma^p (x) d\mu\leq \liminf_{n\rightarrow + \infty}\int _ {\P(Y)}\lip^p(f_n^{-1})(y)\ d(\left (f_n)_\# \mu \right )
= \liminf_{n\rightarrow + \infty} \int _X c_{\gamma _n}^p (x) d\mu .$$
\end{proof}

\subsection{Small reformation plans}
Let $\gamma  \in \GRefo (\mu;\nu)^{H,K}$  
and  $f: X \rightarrow \P(Y)$ be the correspondent disintegration map.
Following the proof of Theorem \ref{invertibility1}, $f$ is locally invertible on an open dense subset $U$ and $N(y,f,U)=D$ is locally constant. In order to prove that actually $N(y,f,U)=1$, fix a small ball $B$ on which $f$ is bi-Lipschitz.  By using   the Metric Area Formula (see \cite{a-k, area-metric, kirc}) we have
 
$$ D\H^N(f(B))=\int_{f(B)}N(y,f,B ) \ d\mathcal H^N(y) =\int_{f^{-1}(f(B))} J(MD(f,x)) \ dx \leq $$ $$\leq \int_{f^{-1}(f(B))} e_f(x)^N\ dx \leq K^N \L^N(f^{-1}(f(B))),$$ 
where $MD(f,x)$ denotes the metric differential introduced in Section $1$, while for any  seminorm $P$ the metric Jacobian is defined by
$$ J(P)= N\omega_N \left ( \int_{S^{N-1}}P(v)^{-N}d \mathcal H^{N-1}(v) \right )^{-1}.$$
For $V=f(B)$ and $\mu = \L^N \res X$ we are led to
$$D\mathcal H^N(V)\leq K^N\L^N(f^{-1}(V))=K^Nf_ \# \mu(V).$$
Therefore,  invertibility for \emph{small} $K$ as in Theorem   \ref{invertibility1} depends on the transport class correspondent to $\Lambda =f_ \# \mu$. Such invertibility property could be obtained for  $\Lambda (V)\leq \mathcal H^N(V)$.  For instance, consider the isometric embedding
$y\mapsto \delta _y$. Let $i(Y)=\Delta\subset \P(Y)$ be the set of Dirac deltas. It follows that $\mathcal H^ N(\Delta)= \mathcal H^ N(i(Y))=\L^N(Y)=1$. Consider $\Lambda$ as the probability measure over $(\P(Y), W)$ defined by
$\Lambda (F)=\int_F \chi_ \Delta (\lambda)\  d \mathcal H ^N(\lambda )$. In such case we have that if $K< \sqrt[N] 2$ then $f$ is globally invertible. Therefore, fixed a transport plan $\eta =f(x)\otimes \mu$ such that $f_ \# \mu =\Lambda$, we get existence of the variational problem of minimizing  $\mathscr {R}^p(\gamma)$ over the set 
$$\{ \gamma \in \GRefo(\mu,\nu)^{H,K} \: : \: \gamma \in [ \eta ] \}, $$
provided of course that such set of reformation plans is not empty. 



\appendix
\section{Curves in metric spaces}

For reader convenience here we just  summarize some basic results.
For  analysis in metric
spaces we refer to \cite{a-g-s, a-t, lip, h-lip}. 
For Lipschitz function on a metric space $(X,d)$ we introduce the metric derivative according to the following definition.
\begin{definition}Given a curve $\rho : [a,b] \rightarrow (X,d)$,  the metric derivative at the point $t\in ]a,b[$ is given by
\begin{equation}\label{defmetricd} \lim _{h\rightarrow 0}\frac{d(\rho (t+h), \rho (t))}{h} \end{equation}whenever it exists and in this case we denote it by $|\dot \rho | (t)$.\end{definition} Of course, the above notion of metric derivative coincides with the metric differential  \eqref{m-diff}.
If $\rho : [a,b] \rightarrow (X,d)$ is a Lipschitz curve, by metric  Rademacher Theorem the metric derivative of $\rho$ exists at $\L ^1$-a.e. point in $[a,b]$.
Furthermore, the length of the Lipschitz curve $\rho$  is given by
\begin{equation}
\label{length}
  l(\rho ) = \int _a^b |\dot \rho | (t) dt .
  \end{equation}
We restrict  to the case of $\P(X):=(\P(\Omega),W)$.
The following theorem  relates absolutely continuous  curves 
in $\P(X)$ to the continuity equation. 

\begin{theorem}\label{tcont}
  Let $t\mapsto \rho _t \in  \P(X), t\in [0,1]$,  be a curve. 
If $\rho _t $ is absolutely continuous and $|\dot \rho|\in L^1(0,1)$ is its metric derivative,
then there exists a Borel vector field $v:(t,x) \mapsto v_t(x)$ such
that  
\begin{equation}\label{mince}
v_t \in L^p(X, \rho_t) \ \ \mbox{and}\ \ \|v_t\|_{L^p(X, \rho_t)} \leq
|\dot \rho|(t) \ \ \mbox{for} \ \ \L^1-a.e. \ t \in [0,1] 
\end{equation}
and the continuity equation 
\begin{equation}
\label{eqcont}
 \dot \rho _t
+{\rm{div}}( v \rho _t)=0 \ \  in \ \  \ (0,1) \times
X,  \end{equation} where the divergence operator with respect to the spatial variables 
is satisfied in the sense of  distributions. \\
Conversely, if $\rho _t$ satisfies the continuity equation \eqref{eqcont} for some
vector fields $v_t$ such that $\|v_t\|_{L^p(\rho_t)} \in L^1(0,1)$, 
then $ t \mapsto \rho _t$ is  absolutely continuous and 
$$ |\dot \rho|(t) \leq \|v_t\|_{L^p(X, \rho_t)}\ \ \mbox{for} \ \ \L^1-a.e. \
t \in [0,1].$$
\end{theorem}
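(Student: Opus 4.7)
The plan is to establish the two implications of Theorem~\ref{tcont} separately, since they rest on rather different tools: the converse follows from a duality computation, while the forward direction requires constructing a velocity field by a Riesz--type representation on a space of gradients.

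\textbf{Converse direction (continuity equation $\Rightarrow$ absolute continuity).} Assume \eqref{eqcont} holds with $\|v_t\|_{L^p(X,\rho_t)}\in L^1(0,1)$. For $\varphi\in\lip_1(X)\cap C^1(X)$ and $0\le s<t\le 1$, testing \eqref{eqcont} against $\varphi$ and integrating in time gives
\begin{equation*}
\int_X\varphi\,d\rho_t-\int_X\varphi\,d\rho_s \;=\; \int_s^t\!\!\int_X \langle\nabla\varphi,v_\tau\rangle\,d\rho_\tau\,d\tau \;\le\; \int_s^t\|v_\tau\|_{L^p(X,\rho_\tau)}\,d\tau,
\end{equation*}
using $|\nabla\varphi|\le 1$ and H\"older's inequality. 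Taking the supremum over $\varphi\in\lip_1(X)$ and invoking Kantorovich duality \eqref{wd1} yields $W(\rho_s,\rho_t)\le\int_s^t\|v_\tau\|_{L^p(X,\rho_\tau)}\,d\tau$, which proves absolute continuity of $t\mapsto\rho_t$ and, after dividing by $t-s$ and letting $s\uparrow t$, the pointwise bound $|\dot\rho|(t)\le\|v_t\|_{L^p(X,\rho_t)}$ at every Lebesgue point.

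\textbf{Forward direction.} Assume $t\mapsto\rho_t$ is absolutely continuous. For each $\varphi\in C_c^\infty(X)$ the scalar function $t\mapsto\int\varphi\,d\rho_t$ is absolutely continuous (since $|\int\varphi\,d(\rho_t-\rho_s)|\le\lip(\varphi)W(\rho_s,\rho_t)$), hence differentiable for a.e.\ $t$. On the subspace $V=\{\nabla\varphi:\varphi\in C_c^\infty(X)\}$, viewed inside $L^{p'}(X,\rho_t;\R^N)$ with $p'$ the conjugate exponent of $p$, define the linear functional
\begin{equation*}
L_t(\nabla\varphi):=\frac{d}{dt}\int_X\varphi\,d\rho_t .
\end{equation*}
The core step is the sharp estimate $|L_t(\nabla\varphi)|\le|\dot\rho|(t)\|\nabla\varphi\|_{L^{p'}(X,\rho_t)}$ for a.e.\ $t$. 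Granted this, Hahn--Banach extends $L_t$ continuously to the closure of $V$ in $L^{p'}$, and Riesz representation produces $v_t\in L^p(X,\rho_t;\R^N)$ with $\|v_t\|_{L^p(X,\rho_t)}\le|\dot\rho|(t)$ and $L_t(\nabla\varphi)=\int\langle\nabla\varphi,v_t\rangle\,d\rho_t$, which is precisely \eqref{eqcont} in the distributional sense. Picking the minimal--norm (hence unique) $v_t$ yields a Borel selection by a standard measurable--selection theorem, giving the required joint measurability of $(t,x)\mapsto v_t(x)$.

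\textbf{Main obstacle.} The sharp estimate is the crux of the proof: the naive bound $|L_t(\nabla\varphi)|\le|\dot\rho|(t)\|\nabla\varphi\|_\infty$ is too coarse to represent $L_t$ as an $L^p$ vector field. To sharpen it, for $h>0$ small I would pick an optimal transport plan $\gamma_h$ realising $W(\rho_t,\rho_{t+h})$, write $\int\varphi\,d(\rho_{t+h}-\rho_t)=\int(\varphi(y)-\varphi(x))\,d\gamma_h(x,y)$, interpolate $\varphi(y)-\varphi(x)$ along line segments, and apply H\"older with exponents $(p,p')$ on the product space to arrive at
\begin{equation*}
\Bigl|\tfrac{1}{h}\int\varphi\,d(\rho_{t+h}-\rho_t)\Bigr|\;\le\;\frac{W(\rho_t,\rho_{t+h})}{h}\,\|\nabla\varphi\|_{L^{p'}(\rho_t)}+o(1);
\end{equation*}
passing $h\to 0^+$ at a common Lebesgue point of $|\dot\rho|$ and of $t\mapsto\|\nabla\varphi\|_{L^{p'}(\rho_t)}$ then delivers the required inequality. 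Globalising the pointwise-in-$t$ construction into a jointly Borel vector field $v$ is routine once this estimate is in hand.
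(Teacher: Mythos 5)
The paper does not actually prove Theorem~\ref{tcont}: Appendix A states it as a quoted result and explicitly defers the full proof to the monograph of Ambrosio--Gigli--Savar\'e \cite{a-g-s} (Theorem 8.3.1 there). Your sketch is, in substance, a faithful outline of that standard proof: the converse direction by testing \eqref{eqcont} against $1$-Lipschitz functions and invoking duality, and the forward direction by bounding the a.e.\ derivative of $t\mapsto\int\varphi\,d\rho_t$ via an optimal plan $\gamma_h$ between $\rho_t$ and $\rho_{t+h}$, then representing the resulting functional on the closure of $\{\nabla\varphi\}$ in $L^{p'}(\rho_t)$ by a vector field $v_t\in L^p(\rho_t)$. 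So there is nothing to compare against in the paper itself; your proposal reconstructs the cited argument correctly at the level of a sketch.

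Two caveats are worth flagging. First, there is an exponent mismatch that originates in the paper's statement but that your proof inherits: the paper defines $W$ as the $1$-Wasserstein distance (Definition~\ref{wd}, duality \eqref{wd1}), yet states the theorem with $L^p$ norms. Your converse direction is genuinely fine for $W_1$ (since $\|v_\tau\|_{L^1(\rho_\tau)}\le\|v_\tau\|_{L^p(\rho_\tau)}$ for probability measures), but your forward direction proves the $W_p$ version with $1<p<\infty$: the first H\"older factor $\bigl(\int|y-x|^p\,d\gamma_h\bigr)^{1/p}$ is $W_p(\rho_t,\rho_{t+h})$, not $W_1$, and the Riesz step needs $p'<\infty$, i.e.\ $p>1$. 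This is the correct pairing (and the one in \cite{a-g-s}), but you should state it. Second, a few steps you label as routine do require care in a complete write-up: the distributional equation \eqref{eqcont} only controls time-integrals against test functions compactly supported in $(0,1)$, so recovering $\int\varphi\,d\rho_t-\int\varphi\,d\rho_s$ requires a mollification in time and the choice of a good (narrowly continuous) representative of $t\mapsto\rho_t$; and to define $L_t$ for a.e.\ $t$ simultaneously for all $\varphi$ one must work with a countable dense family of test functions before extending by density, which is also how the Borel measurability of $(t,x)\mapsto v_t(x)$ is obtained. None of these affects the validity of the strategy.
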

\begin{remark}
The minimality property (\ref{mince}) uniquely determines a tangent
field $v_t$. We will refer to $v_t$ as the tangent vector associated to the curve $t\mapsto \rho _t$.
The continuity equation (\ref{eqcont}) has been used in the
Monge-Kantorovich theory since its beginning for many applications.
The fact that it characterizes the absolutely continuous curves on the space of
probability measures equipped with the Wasserstein metric was only
recently pointed out and the full proof is contained in \cite{a-g-s}.
\end{remark}
An immediate consequence of the continuity equation is the following
\begin{lemma}\label{der} For every solution $(\rho _t,v_t)$ of the continuity equation (\ref{eqcont}) and for every $f\in \C ^1(X)$ it results
\begin{equation}\label{pdem} \frac{d}{dt} \left( \int \sb X f(x)d\rho _t\right)=
\int \sb X \langle \nabla f(x), v_t(x) \rangle  d\rho _t  \end{equation} in the sense of distributions.\end{lemma}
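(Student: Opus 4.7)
The plan is to unfold the distributional formulation of the continuity equation \eqref{eqcont} against a separated test function of the form $\psi(t,x)=\varphi(t)f(x)$ and simply read off the claimed identity. More precisely, the statement that $\dot\rho_t+\mathrm{div}(v\rho_t)=0$ holds in the sense of distributions on $(0,1)\times X$ means that, for every $\psi\in C_c^\infty((0,1)\times X)$,
\begin{equation*}
\int_0^1\!\!\int_X\bigl(\partial_t\psi(t,x)+\langle\nabla_x\psi(t,x),v_t(x)\rangle\bigr)\,d\rho_t(x)\,dt=0.
\end{equation*}
I would first justify using tensor products $\psi(t,x)=\varphi(t)f(x)$ with $\varphi\in C_c^\infty((0,1))$ and $f\in C^1(X)$ as admissible test functions. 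In the ambient setting of the paper $X$ is compact, so $f$ and $\nabla f$ are bounded and no cutoff in the spatial variable is needed; the product $\varphi(t)f(x)$ is then smooth in $t$, $C^1$ in $x$, and has compact support in $(0,1)\times X$, hence it lies in the class of admissible test functions for the distributional continuity equation (standard density of $C^\infty$ in $C^1$ on compacta allows one to relax $C^\infty$ to $C^1$ in the spatial variable).

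Plugging this choice of $\psi$ into the weak formulation yields
\begin{equation*}
\int_0^1\varphi'(t)\!\left(\int_X f(x)\,d\rho_t\right)dt+\int_0^1\varphi(t)\!\left(\int_X\langle\nabla f(x),v_t(x)\rangle\,d\rho_t\right)dt=0.
\end{equation*}
By the very definition of distributional derivative of the scalar function $t\mapsto F(t):=\int_X f\,d\rho_t$, the first integral equals $-\langle F',\varphi\rangle$, while the second is the pairing of the scalar function $t\mapsto G(t):=\int_X\langle\nabla f,v_t\rangle\,d\rho_t$ with $\varphi$. Integrability of $G$ on $(0,1)$ is guaranteed by the Cauchy-Schwarz / Hölder bound $|G(t)|\le\|\nabla f\|_\infty\|v_t\|_{L^p(\rho_t)}$ together with the hypothesis $\|v_t\|_{L^p(\rho_t)}\in L^1(0,1)$ provided by Theorem \ref{tcont}. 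Measurability of $t\mapsto G(t)$ follows from Fubini and the Borel assumption on $(t,x)\mapsto v_t(x)$.

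From the arbitrariness of $\varphi\in C_c^\infty((0,1))$ we conclude $F'=G$ in $\mathcal{D}'((0,1))$, which is exactly \eqref{pdem}. The only mildly delicate point I anticipate is the admissibility of $C^1$ test functions (rather than $C_c^\infty$) in the spatial variable: this is handled by a standard mollification of $f$ and passage to the limit, using dominated convergence, the uniform bound on $\nabla f$ on the compact set $X$, and the $L^1(0,1)$ control of $\|v_t\|_{L^p(\rho_t)}$; this is the only step that requires any care, everything else being a direct unpacking of definitions.
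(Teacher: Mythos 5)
Your argument is correct and is exactly the standard one: the paper itself gives no proof of Lemma \ref{der}, deferring to the references \cite{a1, a-g-s, granieri}, where the identity is obtained precisely by testing the weak formulation of \eqref{eqcont} against separated test functions $\varphi(t)f(x)$ as you do. Your handling of the two minor points (integrability of $t\mapsto\int_X\langle\nabla f,v_t\rangle\,d\rho_t$ via $\|v_t\|_{L^p(\rho_t)}\in L^1(0,1)$ and the relaxation from $C^\infty_c$ to $C^1$ in the spatial variable, which is harmless since $X$ is compact) is adequate, so nothing essential is missing.
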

Actually, it turns out that the map  $f \mapsto \int_X f d \rho _t$ belongs to $W_{loc}^{1,1}(0,1)$. Therefore, formula (\ref{pdem}) holds for a.e. $t\in (0,1)$. We refer the reader to  \cite{a1, a-g-s, granieri} for proofs and more details.  

\section{Sobolev spaces on metric spaces}
\label{sobolev-metric-spaces}
There are several ways to generalize the notion of Sobolev spaces into a metric framework, see for instance \cite{cheeger, point-lip, haj, h-koskela, metric-sobolev,  sob-cheeger, new}. The approach based on the notion of \emph{upper gradient} (see \cite{cheeger, h-koskela,sob-cheeger, new})
 seems to be   more appropriate to the context of this paper.

\begin{definition}\label{ug}  
Let $(X,d_X)$,  $(Y,d_Y)$ be  metric spaces,   let $U\subset X$ be an open subset and  let $u:U\rightarrow Y$ be a given map. 
A Borel function $g:U\rightarrow [0,+\infty]$ is said to be an upper gradient of $u$ if for any unit speed curve $\gamma :[0,l]\rightarrow X$ it results
$$ d_Y(u(\gamma(0)),u(\gamma(l)))\leq \int _0^l g(\gamma (s)) \ ds.$$
\end{definition}
If $u:U\rightarrow Y$ is Lipschitz, then the pointwise Lipschitz constant $\lip (u)$ is an upper gradient for $u$, see \cite{cheeger, point-lip, new}.
For $u\in L^p(U,Y)$, the \emph{Cheeger type $p$-energy} is defined as follows
\begin{equation}
\label{c-energy}
E_p(u)=\inf_{(u_n,g_n)}\liminf_{n \rightarrow + \infty}|g_n|^p_{L^p},
\end{equation}
where the infimum is taken over the sequences $(u_n,g_n)$ such that $g_n$ is an upper gradient of $u_n$ and $u_n\rightarrow u, g_n\rightarrow g $ in $L^p$. By definition \eqref{c-energy} it immediately follows
\begin{equation}\label{p-lsc}
E_p(u)\leq \liminf_{n\rightarrow +\infty }E_p(u_n)
\end{equation} 
whenever $u_n\rightarrow u$ in $L^p$. 
The Cheeger metric $(1,p)$-Sobolev space is defined as
$$ H^{1,p}(U,Y)=\{u\in L^p(U,Y) \ : \ E_p(u)<+\infty \}.$$
We need two more definitions. 
\begin{definition}\label{g-gradient}
A function $g\in L^p$ is called a generalized upper gradient
for $u\in H^{1,p}(U, Y)$ if there exists a sequence $(u_n, g_n )$ such that $g_n$ is an upper
gradient for $u_n$ and $u_n\rightarrow u, g_n\rightarrow g$ in $L^p$.
\end{definition}


From  Definition~\ref{c-energy}  it follows that $ |g|^p_{L^p}\geq E_p(u)$ whenever $g$ is  a generalized
upper gradient  for $u$.
\begin{definition}\label{m-gradient}
A generalized upper gradient $g$  for a map $u \in H^{1,p}(U, Y)$ is said to be minimal if it satisfies 
$ |g|^p_{L^p}= E_p(u)$ \end{definition}
Under some regularity requirement on the target metric space $Y$, it may be proved (see \cite{sob-cheeger}) that
 every $u \in H^{1,p}(U, Y)$, with $1<p<+\infty$ admits a unique minimal generalized upper gradient $g_u$.
 This minimal generalized upper gradient coincides with the pointwise Lipschitz constant $\lip (u)$ under some geometrical property of the spaces  $(X, \mu), Y$ (see   \cite[Theorem 5.9]{sob-cheeger}). 
 In particular, a crucial role is played by the \emph{doubling condition} and a \emph{weak Poincar\'{e} $(1,p)$-inequality} for the space $(X,\mu)$.

\begin{definition}
\label{doubling}
A measure $\mu$ over a metric space $X$ is said to be "doubling" if $\mu$ is finite on bounded sets and there exists a constant $C$ such that for every $x\in X$ and every $r>0$ the following inequality holds
\begin{equation}
\label{d-i}
\mu (B(x,2r))\leq C \mu(B(x,r)).
\end{equation}
\end{definition}
\begin{definition}
\label{poincare}
Let $1\leq p < +\infty$. A metric measure space $(X, d,\mu )$ is said
to satisfy the weak Poincar\'{e} $(1,p)$-inequality  if, for any $s> 0$, there exist constants $C,\Lambda \geq 1$ 
 such that, for any open ball $B(x,r)$ with $0 < r\leq s$, function $f\in L^1(B(x,\Lambda r)))$ and upper gradient $g : B(x,\Lambda r))\rightarrow [0,+\infty ]$ for $f$, the following inequality holds 
 \begin{equation}
 \label{p-i}  
 \mean {B(x,r)}   \left | f- \mean {B(x,r)} f \ d\mu \right |\ d\mu \leq C \left ( \mean {B(x,\Lambda r) }g^p \ d\mu \right ) ^\frac{1}{p}                  
 \end{equation}
\end{definition}
Observe that under some geometrical requirement on $X$, the Poincar\'{e} inequality \eqref{p-i} may be required to hold just for Lipschitz functions $f$ (see \cite{h-lip,h-koskela}). 
The euclidean space $\R^N$ equipped with the Lebesgue measure $\L^N$ is doubling and satisfies the above Poincar\'{e} inequality with $\Lambda =1$. 
Given a square $ Q$    and $\mu =\L^N\res Q$, 
by the inequality 
$$ \frac{1}{2^N}\L^N (B(x,r))\leq  \mu (B(x,r))\leq \L^N (B(x,r)),$$
holding for every ball $ B(x,r)$ of $Q$ and the usual Poincar\'{e} inequality on convex sets,  it follows that $(Q,\mu )$ is doubling and supports the Poincar\'{e} inequality \eqref{p-i}. Since the doubling condition and the Poincar\'{e} inequality are stable under bi-Lipschitz maps, every diffeomorphic (or bi-Lipschitz), with volume preserving maps,  domain $\Omega $ (as balls, see for instance
\cite{fonseca-parry, bilip}) with the same volume of the square $Q$,  equipped with the measure $\nu =\L^N \res \Omega $ is doubling and supports the Poincar\'{e} inequality \eqref{p-i}. 
For more details on the doubling and Poincar\'{e} inequality we refer the reader for instance to \cite{a-t, cheeger, h-koskela, metric-sobolev}.

\begin{acknowledgment*}
The authors acknowledge  Camillo De Lellis and Luigi Ambrosio for interesting discussions and precious comments. The first author acknowledges the partial support of the INDAM, Istituto Nazionale di Alta Matematica. 
Some of the results of this paper was announced in occasion of the  International Conference: \emph{
Monge-Kantorovich optimal transportation  problem, transport metrics and their applications,  St. Petersburg, Russia,} dedicated to the centenary of L.V. Kantorovich. The first author wish to thanks the audience for discussions and criticism. 
The research of the first author  was also  supported by the 2008 ERC Advanced Grant Project N. 226234  \emph{Analytic Techniques for
Geometric and Functional Inequalities}.

\end{acknowledgment*}


\end{document}